\documentclass[a4paper]{amsart}
\usepackage{amssymb,stmaryrd}

\title[The maximal linear extension theorem]{The maximal linear extension theorem\\
in second order arithmetic}
\author{Alberto Marcone}
    \address{Dipartimento di Matematica e Informatica,
    Universit\`{a} di Udine,
    33100 Udine,
    Italy}
\email{alberto.marcone@dimi.uniud.it}

\author{Richard A. Shore}
    \address{Department of Mathematics,
             Cornell University,
             Ithaca, NY 14853,
             USA}
\email{shore@math.cornell.edu}

\thanks{Marcone's research was partially supported by PRIN of Italy.
Part of this work was carried out while Shore was a GNSAGA Visiting
Professor at the Department of Mathematics and
Computer Science \lq\lq Roberto Magari\rq\rq\ of the
University of Siena. He was also partially supported by NSF Grant
DMS-0852811 and Grant 13408 from the John Templeton Foundation.
We thank Andreas Weiermann for some useful bibliographic references.
We thank the anonymous referee for pointing out an error in an earlier
version of the proof of Theorem \ref{ATR->MC}.}

\date{Saved: January 18, 2011. Compiled: \today}

\newtheorem{theorem}{Theorem}[section]
\newtheorem{lemma}[theorem]{Lemma}

\newtheorem{corollary}[theorem]{Corollary}

\theoremstyle{definition}
\newtheorem{definition}[theorem]{Definition}

\numberwithin{equation}{section}

\newcommand{\set}[2]{\{\,{#1}\mid{#2}\,\}}

\newcommand{\conc}{{{}^\smallfrown}}
\newcommand{\N}{\ensuremath{\mathbb N}}
\newcommand{\CC}{\ensuremath{\mathcal{C}}}
\newcommand{\DD}{\ensuremath{\mathcal{D}}}
\newcommand{\II}{\ensuremath{\mathcal{I}}}
\newcommand{\JJ}{\ensuremath{\mathcal{J}}}
\newcommand{\KK}{\ensuremath{\mathcal{K}}}
\newcommand{\LL}{\ensuremath{\mathcal{L}}}
\newcommand{\MM}{\ensuremath{\mathcal{M}}}
\newcommand{\PP}{\ensuremath{\mathcal{P}}}
\newcommand{\QQ}{\ensuremath{\mathcal{Q}}}
\newcommand{\RR}{\ensuremath{\mathcal{R}}}
\newcommand{\es}{\emptyset}
\newcommand{\om}{\omega}
\newcommand{\rk}{\operatorname{rk}}
\newcommand{\hh}{\operatorname{ht}}
\newcommand{\lh}{\operatorname{lh}}

\newcommand{\Lin}{\operatorname{Lin}}
\newcommand{\Ch}{\operatorname{Ch}}
\newcommand{\Bad}{\operatorname{Bad}}
\newcommand{\Desc}{\operatorname{Desc}}
\newcommand{\dsum}{\displaystyle\sum}
\newcommand{\nsum}{\mathbin{\#}}

\newcommand{\PI}[2]{\ensuremath{\boldsymbol\Pi^{#1}_{#2}}}
\newcommand{\SI}[2]{\ensuremath{\boldsymbol\Sigma^{#1}_{#2}}}
\newcommand{\DE}[2]{\ensuremath{\boldsymbol\Delta^{#1}_{#2}}}

\newcommand{\system}[1]{\mbox{\fontfamily{cmss}\fontshape{n}\fontseries{m}%
    \selectfont#1}}
\newcommand{\RCA}{\system{RCA}\ensuremath{_0}}
\newcommand{\WKL}{\system{WKL}\ensuremath{_0}}
\newcommand{\CAC}{\system{CAC}}
\newcommand{\ACA}{\system{ACA}\ensuremath{_0}}
\newcommand{\ATR}{\system{ATR}\ensuremath{_0}}
\newcommand{\SAC}{\system{\SI11-AC}\ensuremath{_0}}
\newcommand{\PCA}{\system{$\PI11$-CA}\ensuremath{_0}}
\newcommand{\MLE}{\system{MLE}}
\newcommand{\MC}{\system{MC}}

\begin{document}
\subjclass[2010]{Primary: 03B30; Secondary: 06A07}

\begin{abstract}
We show that the maximal linear extension theorem for well partial
orders is equivalent over \RCA\ to \ATR. Analogously, the maximal chain
theorem for well partial orders is equivalent to \ATR\ over \RCA.
\end{abstract}

\maketitle

\section{Introduction}

A \emph{wpo} (well partial order) is a partial order $(P, {\leq_P})$
such that for every infinite sequence $(x_i)$ of elements of $P$ we can
find $i<j$ with $x_i \leq_P x_j$. This notion emerged several times in
mathematics, as reported in \cite{Kru}.

There are many characterizations of wpo's, supporting the claim that
this is indeed a very natural notion. Wpo's are exactly the partial
orders such that any nonempty subset has a finite set of minimal
elements, or those which are well founded and contain no infinite
antichains. For the purpose of this paper, the most important
characterization of wpo's is the one stating that a partial order is a
wpo if and only if all its linear extensions (see Definition
\ref{lext}) are well-orders.

We mention here only two major results about wpo's and wqo's (see below
for the distinction between these two notions). Fra\"\i ss\'e's
conjecture states that embeddability on countable linear orders is a
wqo. Laver's proved this in \cite{Laver71} by establishing a stronger
statement using Nash-Williams' notion of better-quasi-order
(\cite{NW68}). Robertson and Seymour proved in a long list of papers
culminating in \cite{minor} (see \cite[\S5]{Thomassen} for an overview)
that the minor relation on finite graphs is a wpo.

The characterization of wpo's in terms of linear extensions leads to
the following natural definition.

\begin{definition}
If \PP\ is a wpo, its \emph{maximal order type} $o(\PP)$ is the
supremum of all ordinals which are order types of linear extensions of
\PP.
\end{definition}

The following theorem was originally proved by de Jongh and Parikh
(\cite{JP}).

\begin{theorem}\label{dJP}
If \PP\ is a wpo, the supremum in the definition of $o(\PP)$ is
actually a maximum, i.e.\ there exist a linear extension of \PP\ with
order type $o(\PP)$. Such a well-order is called a \emph{maximal linear
extension} of \PP.
\end{theorem}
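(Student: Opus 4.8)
The plan is to build a maximal linear extension explicitly, by a transfinite recursion that peels off one minimal element of $\PP$ at a time, choosing it greedily so as not to squander order type. Write $\lambda=o(\PP)$. The case where $\lambda$ is a successor is immediate: if $\lambda=\gamma+1$, then since $\lambda$ is the supremum of the order types of the linear extensions of $\PP$, one of them has order type $>\gamma$ and hence exactly $\lambda$. So the real content lies in the case where $\lambda$ is a limit ordinal, and the goal of the recursion is an enumeration $\langle p_\xi\rangle$ of all of $\PP$ of length $\lambda$ whose induced linear order is a linear extension.

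First I would record two auxiliary facts. (i) \emph{Monotonicity}: if $\QQ\subseteq\PP$ carries the induced order, then $o(\QQ)\le o(\PP)$. Given a linear extension $L$ of $\QQ$, the transitive closure of ${\leq_\PP}\cup L$ is still a partial order -- in a hypothetical cycle, contracting maximal runs of a single relation produces an alternating cycle, but then every ${\leq_\PP}$-edge joins two elements of $\QQ$ and is therefore already an $L$-edge, reducing to an impossible $L$-cycle -- so by the order-extension (Szpilrajn) theorem it extends to a linear order $L'$ of $\PP$ with $L'\upharpoonright\QQ=L$; since $\PP$ is a wpo, $L'$ is a well-order, so $\ot(L)\le\ot(L')\le o(\PP)$, and we take the supremum over $L$. (ii) \emph{A one-step inequality}: the least element of a linear extension of a nonempty wpo is a minimal element, and deleting it leaves a linear extension of the rest; hence $o(\PP)\le 1+\max_{q\in\min(\PP)}o(\PP\setminus\{q\})$, the maximum over the finite antichain $\min(\PP)$. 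With (i) and $1+\alpha=\alpha$ for $\alpha\ge\om$, this yields: for every wpo $Q$ with $o(Q)\ge\om$ there is a minimal $q\in Q$ with $o(Q\setminus\{q\})=o(Q)$; while for finite nonempty $Q$, deleting any minimal element lowers $o$ by exactly one.

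Now the recursion: having chosen $p_\eta$ for $\eta<\xi$, set $Q_\xi=\PP\setminus\{p_\eta\mid\eta<\xi\}$; if $Q_\xi=\es$, stop; otherwise let $p_\xi$ be a minimal element of $Q_\xi$ maximizing $o(Q_\xi\setminus\{p_\xi\})$ among the minimal elements, subject to a tie-breaking rule to be fixed later. Such a sequence always enumerates its range in a way that is a linear extension of the induced suborder (each $p_\xi$ is minimal in $Q_\xi$, which contains every later $p_\zeta$). Prepending $p_0<p_1<\cdots$ to a linear extension of $Q_\xi$ of order type $\beta$ gives a linear extension of $\PP$ of order type $\xi+\beta$, so $\xi+o(Q_\xi)\le\lambda$ at every stage reached; in particular the recursion halts by stage $\lambda$, and if it runs for exactly $\lambda$ steps it produces the required maximal linear extension. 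So the whole proof reduces to maintaining the invariant $\xi+o(Q_\xi)=\lambda$. At successor stages this is routine from (ii): if $o(Q_\xi)\ge\om$ the greedy choice keeps $o(Q_{\xi+1})=o(Q_\xi)$ and $1+o(Q_\xi)=o(Q_\xi)$; if $o(Q_\xi)$ is finite it drops by one; either way $(\xi+1)+o(Q_{\xi+1})=\xi+o(Q_\xi)=\lambda$.

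The main obstacle, as I see it, is the passage through limit stages: assuming $\eta+o(Q_\eta)=\lambda$ for all $\eta<\xi$ with $\xi$ a limit, one must prove $\xi+o(Q_\xi)=\lambda$, i.e.\ that $o(Q_\xi)$ has not collapsed below what the invariant demands. This is precisely where the wpo hypothesis must be used in an essential way. A purely greedy rule is not enough: for $\PP=\om\sqcup\om$ it allows one to alternate between the two chains, emptying $Q_\om$, whereas $o(\PP)=\om\cdot2$. One must instead pick the tie-breaking so that the currently active part of $\PP$ is not dissipated -- morally, so that the recursion commits to a single cofinal direction -- and then show, by a compactness argument genuinely using well-partial-orderedness, that the greedily built initial segment $\langle p_\eta\mid\eta<\xi\rangle$ does extend to linear extensions of $\PP$ of order types cofinal in $\lambda$. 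This is exactly the ``iterate a construction along a well-order and pass to the limit'' pattern whose strength is calibrated by \ATR, which is no coincidence, since the remainder of the paper shows Theorem~\ref{dJP} to be equivalent to \ATR\ over \RCA; and one cannot sidestep it by reducing to simpler wpos, as already $\PP=\om$ -- a well-order with $o(\PP)=\om$ a limit -- requires a construction of length $\om$. Once the invariant is maintained through limits, the recursion cannot terminate before stage $\lambda$ (since $o(Q_\xi)>0$ for $\xi<\lambda$) and cannot overshoot it, so it has length exactly $\lambda$ and its output is a maximal linear extension of $\PP$.
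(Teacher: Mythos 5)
There is a genuine gap, and you have in fact located it yourself: the limit stage of your recursion is never carried out. Everything up to that point (monotonicity via Szpilrajn, the one-step inequality over the finite set of minimal elements, the successor step) is fine but routine; the entire content of the theorem is concentrated in showing that a suitable tie-breaking rule keeps the invariant $\xi+o(\QQ_\xi)=\lambda$ at limit $\xi$. Your own example $\om\sqcup\om$ shows that the invariant at a limit does \emph{not} follow from the induction hypothesis plus monotonicity (for $\lambda=\om\cdot 2$ one has $n+\delta_n=\lambda$ with $\delta_n=\om\cdot2$ for all $n<\om$, yet the required $\delta_\om=\om$ can fail entirely, with $\QQ_\om=\es$). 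So the proof needs a concrete mechanism forcing the remainder at a limit stage to retain enough order type, and the proposal only gestures at one (``commit to a single cofinal direction'', ``a compactness argument genuinely using well-partial-orderedness'') without specifying the rule or giving the argument. As written, the proposal proves only the easy upper-bound half and the successor bookkeeping.

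For comparison, the paper's construction (Theorem \ref{ATR->MLE}) is organized top-down on the tree of bad sequences $\Bad(\PP)$, controlled by the rank $\rk_\PP(\sigma)$ rather than by $o(\cdot)$, and it resolves exactly your missing step as follows: at a node $\sigma$ of limit rank it uses the Ramsey-type property of wpo's (Lemma \ref{wqo(set)}) to extract elements $x_0<_P x_1<_P\cdots$ of $P_\sigma$ whose ranks $\rk_\PP(\sigma\conc\langle x_i\rangle)$ are cofinal in $\rk_\PP(\sigma)$ in a controlled way (pinned between the $\lambda_{n_i}$'s read off the Cantor normal form), and then glues the pieces $Q_{i+1}=P_{\sigma\conc\langle x_{i+1}\rangle}\setminus P_{\sigma\conc\langle x_i\rangle}$ end to end. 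That the $x_i$ form a $\leq_P$-chain is what makes the concatenation a linear extension and, via the natural-sum bound for shuffles (Lemma \ref{ATRshuffle}), what guarantees each block contributes at least $\om^{\beta_{n_i}}$ to the order type. That chain extraction is the precise realization of your ``single cofinal direction,'' and it is the step your proposal would need to supply to be a proof.
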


An exposition of (essentially) the original proof appears in
\cite[\S8.4]{Harz}. A proof of Theorem \ref{dJP} based on the study of
the partial order of the initial segments of \PP\ is included in
\cite[\S4.11]{Fra00}. K\v{r}\'{\i}\v{z} and Thomas (\cite[Theorem 4.7]{KT})
and Blass and Gurevich (\cite[Proposition 52]{BG}) gave proofs with a
strong set-theoretic flavor.\smallskip

In any well founded partial order (and in particular in a wpo), one can
look at chains (i.e.\ linear suborderings of the partial order) and
give the following definition.

\begin{definition}
If \PP\ is a well founded partial order, its \emph{height} $\chi (\PP)$
is the supremum of all ordinals which are order types of chains in \PP.
\end{definition}

The following theorem is contained in \cite[Theorem 4.9]{KT}.
K\v{r}\'{\i}\v{z} and Thomas attribute the result and the proof to Wolk
(\cite[Theorem 9]{Wolk}), whose statement is actually a bit stronger
(see Theorem \ref{MC+} below).

\begin{theorem}\label{Wolk}
If \PP\ is a wpo, the supremum in the definition of $\chi (\PP)$ is
actually a maximum, i.e.\ there exist a chain in \PP\ with order type
$\chi (\PP)$. Such a well-order is called a \emph{maximal chain} in
\PP.
\end{theorem}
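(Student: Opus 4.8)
The plan is to prove the theorem by transfinite induction on the ordinal $\Theta := \sup_{p\in\PP}(\hh(p)+1)$, where $\hh(p)$ denotes the rank of $p$ in the partial order $(P,{<_P})$; this is meaningful because a wpo is well founded, so that $<_P$ is a well founded relation and, in particular, every chain of $\PP$ is a well order. A trivial induction along a chain shows that its $\xi$-th element has rank at least $\xi$, so every chain of $\PP$ has order type at most $\Theta$ and in particular $\chi(\PP)\leq\Theta$. I would in fact prove the formally stronger assertion that $\PP$ carries a \emph{tight} chain of order type $\Theta$ --- meaning a chain whose $\xi$-th element has rank \emph{exactly} $\xi$ for each $\xi<\Theta$ --- since such a chain witnesses $\chi(\PP)\geq\Theta$ and thus yields at one stroke that $\chi(\PP)=\Theta$ and that it is attained. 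The case $\Theta=0$ (equivalently $\PP=\es$) being trivial, assume $\Theta>0$.

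The single structural fact needed is that for every $\xi<\Theta$ the set $M_\xi:=\set{p\in\PP}{\hh(p)=\xi}$ is \emph{nonempty} and \emph{finite}. Nonemptiness is immediate from the definitions of $\Theta$ and of $\hh$. Finiteness is the one place where the hypothesis that $\PP$ is a wpo is used in full strength, rather than mere well-foundedness: $M_\xi$ coincides with the set of minimal elements of $\set{p\in\PP}{\hh(p)\geq\xi}$ (if $m$ is minimal in the latter set then every element below $m$ has rank $<\xi$, so $\hh(m)\leq\xi$, hence $\hh(m)=\xi$), and a nonempty subset of a wpo has only finitely many minimal elements. I also record that for any $p\in\PP$ the subposet $\PP_{<p}:=\set{q\in\PP}{q<_P p}$ is again a wpo, that the rank of $q$ in $\PP_{<p}$ agrees with its rank in $\PP$ when $q<_P p$, and that the ordinal $\sup_{q<_P p}(\hh(q)+1)$ built from $\PP_{<p}$ is exactly $\hh(p)$. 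The successor case is now immediate: if $\Theta=\beta+1$, pick $p\in M_\beta$; by the inductive hypothesis applied to $\PP_{<p}$ (for which this ordinal is $\hh(p)=\beta<\Theta$) there is a tight chain $C$ of order type $\beta$ consisting of elements strictly below $p$, and then $C\cup\{p\}$ is a tight chain of order type $\beta+1=\Theta$.

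The limit case is the heart of the matter. I would work with the tree $T$ whose nodes are the tight chains of $\PP$, ordered by end-extension, so that a node ``at level $\xi$'' is a tight chain of order type $\xi$ and a branch that meets every level below $\Theta$ is nothing other than a tight chain of order type $\Theta$. Three properties are relevant. First, $T$ is finitely branching: the immediate successors of a tight chain of order type $\xi$ are obtained by appending an element of the \emph{finite} set $M_\xi$ lying above every element already chosen. Second, $T$ is continuous at limit levels: the union of a tight chain of order type a limit $\lambda<\Theta$ is again a tight chain. Third, $T$ has a node at every level $\xi<\Theta$: since $M_\xi\neq\es$, for $p\in M_\xi$ the wpo $\PP_{<p}$ has associated ordinal $\hh(p)=\xi<\Theta$, so by the inductive hypothesis it, and hence $\PP$, carries a tight chain of order type $\xi$. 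From these a K\"onig-type compactness argument yields a branch of $T$ running through all levels below $\Theta$, i.e.\ a tight chain of $\PP$ of order type $\Theta$, completing the induction. The step I expect to be the real obstacle is exactly this last one --- extracting a cofinal branch of a finitely branching tree whose levels below $\Theta$ are all inhabited, the finiteness of the $M_\xi$ being indispensable --- and it is also precisely the point at which the wpo hypothesis, and not just well-foundedness, is needed: a disjoint union of finite chains of lengths $1,2,3,\dots$ is well founded with $\chi=\om$ but contains no chain of order type $\om$.
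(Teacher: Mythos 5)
Your reduction to the rank levels $M_\xi$ is sound: they are indeed nonempty and finite for $\xi<\Theta$, the successor step works, and your closing example correctly isolates where well-partial-orderedness (rather than mere well-foundedness) must enter. The genuine gap is the limit case. The three properties you list for the tree $T$ of tight chains --- finitely branching, continuous at limit levels, inhabited at every level below $\Theta$ --- do \emph{not} imply the existence of a cofinal branch once $\Theta$ is a limit ordinal greater than $\om$. A node of $T$ at an intermediate limit level $\mu$ is an entire tight chain of order type $\mu$, i.e.\ a point of a product of infinitely many finite sets, so level $\mu$ of $T$ can be infinite even though $T$ is finitely branching; consequently the K\"onig invariant \lq\lq the current node has extensions at every higher level\rq\rq\ survives successor steps but can die at $\mu$: every node strictly below level $\mu$ may be extendible arbitrarily far while the unique node at level $\mu$ determined by your partial branch is not. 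Concretely, take the full binary tree of height $\om$, adjoin one node at level $\om$ for each of its branches, fix a countable dense family of branches $b_1,b_2,\dots$ and let $b_n$ carry a further chain of length exactly $n$: this tree satisfies all three of your properties up to height $\om+\om$ and has no branch of that length. No such pathology can arise from a wpo --- the theorem is true --- but that is precisely the point at which your proof must use the order structure, and it does not.

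The standard repair (essentially Wolk's original argument, to whom the paper attributes the result) is to abandon end-extension and make all choices simultaneously by compactness on the product $\prod_{\xi<\Theta} M_\xi$: for any finite set of levels $\xi_1<\dots<\xi_n$ one builds, top down by taking minimal elements, a chain $p_1<_P\dots<_P p_n$ with $p_i$ of rank $\xi_i$; Rad\'o's selection lemma (equivalently, Tychonoff plus the finite intersection property for the closed sets $\set{f}{f(\xi)<_P f(\eta)}$) then yields a selection whose range is a chain meeting every $M_\xi$, hence of order type $\Theta$. This also renders your transfinite induction on $\Theta$ unnecessary. The proof actually given in this paper (Theorems \ref{ATR->MC} and \ref{MC+}) is different again: to remain inside \ATR\ it replaces Rad\'o's lemma by a diagonal extraction from the wqo property (Lemma \ref{Schmidt}), used to glue chains at limit ranks of the tree of descending sequences. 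So your route, once the compactness step is repaired, is closer to Wolk's classical proof than to the paper's.
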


Wolk's result appears also in Harzheim's book (\cite[Theorem
8.1.7]{Harz}). The result was extended to a wider class of well founded
partial orders by Schmidt (\cite{Schm81}) in the countable case, and by
Milner and Sauer (\cite{MS}) in general.\smallskip

In this paper we study Theorems \ref{dJP} and \ref{Wolk} from the
viewpoint of reverse mathematics. The goal of reverse mathematics is to
calibrate the proof-theoretic strength of mathematical statements by
establishing the subsystem of second order arithmetic needed for their
proof. We refer the reader to \cite{Sim09} for background information
on reverse mathematics and the relevant subsystems of second order
arithmetic. The weakest subsystem is \RCA, which consists of the axioms
of ordered semi-ring, plus $\Delta^0_1$ comprehension and $\Sigma^0_1$
induction. Adding set-existence axioms to \RCA\ we obtain \WKL, \ACA,
\ATR,\ and \PCA, completing the so-called \lq\lq big five\rq\rq\ of
reverse mathematics. In this paper we deal with \RCA, \ACA, and \ATR.
\ACA\ is obtained by adding to \RCA\ the axiom scheme of arithmetic
comprehension, while \ATR\ further extends \ACA\ by allowing
transfinite iterations of arithmetic comprehension. \ATR\ implies \DE11
comprehension (\cite[Lemma VIII.4.1]{Sim09}) and hence \DE11
transfinite induction.

The question of the proof-theoretic strength of Theorem \ref{dJP} was
raised by the first author in the Open Problems session of the workshop
\lq\lq Computability, Reverse Mathematics and Combinatorics\rq\rq\ held
at the Banff International Research Station (Alberta, Canada) in
December 2008 (a list of those open problems is available at
\verb2http://www.math.cornell.edu/~shore/papers/pdf/BIRSProb91.pdf2).

Denoting by \MLE\ and \MC\ the formal versions (to be defined precisely
in Section \ref{sect:po} below) of Theorems \ref{dJP} and \ref{Wolk} we
can state the main results of the paper.

\begin{theorem}\label{main}
Over \RCA, the following are equivalent:
\begin{enumerate}
  \item \ATR;
  \item \MLE;
  \item \MLE\ restricted to disjoint unions of two linear orders.
\end{enumerate}
\end{theorem}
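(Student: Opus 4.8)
The plan is to prove the cycle $(1)\Rightarrow(2)\Rightarrow(3)\Rightarrow(1)$ over \RCA. The implication $(2)\Rightarrow(3)$ is trivial, being a restriction; the work is in the forward direction $(1)\Rightarrow(2)$ and in the reversal $(3)\Rightarrow(1)$, the latter splitting naturally into first obtaining \ACA\ and then, over \ACA, obtaining \ATR.

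For $(1)\Rightarrow(2)$ I would formalize the de Jongh--Parikh argument inside \ATR. Given a wpo $\PP$, let $\Bad(\PP)$ be the tree whose nodes are the finite bad sequences of $\PP$: an infinite branch is an infinite bad sequence, so $\PP$ being a wpo forces $\Bad(\PP)$ to be well founded, provably in \ACA. The crucial use of \ATR\ is that it proves every well-founded tree carries an ordinal rank function (run an arithmetical transfinite recursion along its Kleene--Brouwer ordering). Applied to $\Bad(\PP)$ this yields a rank function $\rho$, and an induction identifies $\rho(\sigma)$ with $o(\PP_\sigma)$, where $\PP_\sigma=\set{y\in\PP}{\forall i\ \sigma_i\not\leq_P y}$; in particular $\rho(\langle\rangle)$ is a well-order and is the candidate value of $o(\PP)$. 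From $\rho$ one builds the maximal linear extension following de Jongh--Parikh's attainment argument --- in essence, $x$ precedes $y$ when $o(\PP_x)<o(\PP_y)$, with a tie-breaking rule read off from the higher levels of $\Bad(\PP)$ --- obtaining a linear extension of order type $\rho(\langle\rangle)$, and one checks arithmetically that no linear extension of $\PP$ is longer. The point requiring care is that the production of $\rho$ and the ensuing definitions and verifications stay at the \ATR\ / arithmetical level.

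For the reversal I would first derive \ACA. Note that a disjoint union of two linear orders is a wpo exactly when both summands are well-orders, and that \RCA\ already produces their concatenation as a linear extension which is a well-order; so the content of $(3)$ is the existence of a \emph{maximal} such extension, whose order type must be the natural (Hessenberg) sum of the order types of the two summands --- an ordinal operation not available in \RCA. Coding the range of an arbitrary injection into the Cantor-normal-form structure of one summand, one arranges that a maximal linear extension of the resulting disjoint union computes that range, so $(3)\Rightarrow\ACA$. Over \ACA\ I would then derive comparability of countable well-orderings --- a theorem of Friedman equivalent to \ATR\ over \RCA\ (see \cite{Sim09}) --- by analysing maximal linear extensions of disjoint unions $A\sqcup B$ of well-orders: such an extension is a ``merge'' of the sequences $A$ and $B$, and maximality forces it to be the Cantor-normal-form merge of $|A|$ and $|B|$; reading the merge together with the $\{A,B\}$-colouring of its elements recovers uniformly the comparison of every initial segment of $A$ with every initial segment of $B$, in particular of $A$ with $B$.

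The step I expect to be the main obstacle is this last one: showing that \emph{maximality} of the linear extension genuinely pins down the natural-sum structure --- ruling out ``lazy'' maximal extensions that attain the correct order type without exhibiting the merge --- and then converting that structure into an actual comparison map within \ACA. Should a direct structural analysis prove too delicate, the fallback is the standard pseudohierarchy argument for \ATR\ reversals: assuming \ACA\ together with the failure of \ATR, build from a pseudohierarchy a disjoint union of genuine well-orders whose putative maximal linear extension, confronted with the descending sequence through the pseudohierarchy, yields a contradiction.
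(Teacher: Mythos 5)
Your top-level architecture matches the paper's (the cycle $(1)\Rightarrow(2)\Rightarrow(3)\Rightarrow(1)$, the reversal split into \ACA\ first and then \ATR\ over \ACA, and the forward direction organized around the rank of $\Bad(\PP)$), but each of the three substantive steps has a genuine gap. For $(1)\Rightarrow(2)$: the de Jongh--Parikh ``attainment argument'' you propose to follow proceeds by induction on the maximal order type over the \SI11\ statement ``there is a linear extension of such-and-such a type'', i.e.\ it needs \SI11\ induction, which is not available in \ATR\ --- the paper points this out explicitly, and circumventing it is the whole content of its Section 3. What the paper actually does is an arithmetical transfinite recursion building, for every $\sigma\in\Bad(\PP)$, a concrete linear extension $\LL_\sigma$ of $\PP_\sigma$ with $\rk_\PP(\sigma)\preceq\LL_\sigma$; the delicate point, absent from your sketch, is the limit-rank case, where one extracts (via Lemma \ref{wqo(set)}) a $\leq_P$-increasing sequence $(x_i)$ whose ranks are cofinal in $\rk_\PP(\sigma)$ and matched to its Cantor normal form, pastes the $\LL_{\sigma\conc\langle x_i\rangle}$ together along the difference sets $Q_i$, and verifies length via Lemma \ref{ATRshuffle} inside a \DE11\ transfinite induction. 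Your ordering ``$x$ precedes $y$ when $o(\PP_x)<o(\PP_y)$'' is not developed to the point where one can see it yields a linear extension of the right type without that machinery.

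For the reversal, the obstacle you yourself flag is fatal to the plan as stated: a maximal linear extension of $A\oplus B$ has order type the natural sum, but nothing forces it to exhibit the ``Cantor-normal-form merge'', and there is no apparent way in \ACA\ to convert an arbitrary maximal shuffle into a comparison map between $A$ and $B$. The paper does not overcome this; it sidesteps it by reversing to a different equivalent of \ATR\ (Theorem \ref{comp}(3): there is no infinite sequence of pairwise non-embeddable well-orders), applying \MLE\ to the single wpo $\bigl(\sum_n\JJ_{2n}\bigr)\oplus\bigl(\sum_n\JJ_{2n+1}\bigr)$ with $\JJ_n=\om^{\om^{\II_n}}$, and using indecomposability (Theorems \ref{indec} and \ref{Hirst}) to show that no linear extension of this wpo can simultaneously absorb the two canonical extensions $\sum_n\JJ_n$ and $\LL_0+\LL_1$. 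Your \ACA\ step is likewise only a gesture: if both summands are given as genuine well-orders, \RCA\ already proves the disjoint union is a wpo with a well-ordered linear extension, and it is unclear how a maximal one would compute a jump. The paper instead uses a computable linear order of type $\om+\om^*$ any descending sequence of which computes $\es'$ (Lemma \ref{lemma:0'}) --- a partial order \RCA\ cannot recognize as a non-wpo --- and extracts $\es'$ from a putative maximal linear extension of $\LL\oplus\LL$ by a case analysis that either produces the $\om$/$\om^*$ partition as a set or builds a descending sequence from an embedding of $2\cdot\LL$. You would need to supply arguments of this kind at all three points.
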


\begin{theorem}\label{MC}
Over \RCA, the following are equivalent:
\begin{enumerate}
  \item \ATR;
  \item \MC;
  \item \MC\ restricted to disjoint unions of two linear orders.
\end{enumerate}
\end{theorem}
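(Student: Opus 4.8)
The plan is to establish the cycle $(1)\Rightarrow(2)\Rightarrow(3)\Rightarrow(1)$, following the proof of Theorem~\ref{main} closely, with maximal chains in place of maximal linear extensions and the height in place of the maximal order type throughout. The implication $(2)\Rightarrow(3)$ is immediate, since a disjoint union of two linear orders that is a wpo is in particular a wpo.

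For $(1)\Rightarrow(2)$ I would argue in \ATR\ via rank functions. The starting point is that \ATR\ proves that every countable well founded partial order \PP\ carries a rank function $\rk\colon\PP\to\lambda$ into a countable ordinal, satisfying $\rk(y)=\sup\set{\rk(x)+1}{x<_\PP y}$. I would then establish, as in the classical proof of Theorem~\ref{Wolk}, that $\chi(\PP)=\sup\set{\rk(x)+1}{x\in\PP}$ and that this supremum is attained by a chain; the nontrivial direction uses the wpo property in the form that, below an element of rank $\beta$, there is for each $\alpha<\beta$ a descending chain meeting every rank $<\beta$. The chain of order type $\chi(\PP)$ is then assembled by a transfinite recursion, the subtle point being that when $\sup\set{\rk(x)+1}{x\in\PP}$ is a limit there may be no element of maximal rank to descend from, so the recursion must keep its choices coherent; here I would use that \ATR\ also proves \MLE\ (Theorem~\ref{main}) to fix a well order extending \PP\ along which the recursion picks, at each stage, the least admissible element. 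Since this recursion is arithmetic in the rank function and the extension, \ATR\ suffices to run it.

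For $(3)\Rightarrow(1)$ I would derive from \MC\ for disjoint unions of two linear orders the comparability of countable well orderings, which is equivalent to \ATR\ over \RCA\ (see \cite{Sim09}). Given well orders $X$ and $Y$, form the wpo $\PP=X\sqcup Y$, placing no relations between the two copies, and let $C$ be a maximal chain of \PP\ supplied by \MC. Since elements of the two copies are incomparable in \PP, the chain $C$ is contained in a single copy, say in that of $X$ (the reverse case being symmetric, and the case of $C$ with at most one element trivial). But the copy of $Y$ is itself a chain of \PP, so the maximality of $C$ yields an order embedding of $Y$ into $C$, hence into $X$. Therefore $X$ and $Y$ are comparable, and \ATR\ follows.

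The step demanding the most care is the construction in $(1)\Rightarrow(2)$: one must verify not only that the height equals $\sup\set{\rk(x)+1}{x\in\PP}$, but also that the witnessing chain can be obtained by an \ATR-recursion rather than by an unavailable appeal to choice, the limit stages (where the recursion must reach into the appropriate part of \PP) being the delicate point and, to judge from the acknowledgements, the place where an earlier version slipped. In $(3)\Rightarrow(1)$ the only point to watch is that the formalization of \MC\ (carried out in Section~\ref{sect:po}) genuinely supplies an order embedding of each competing chain into the maximal one, which is what the faithful rendering of Theorem~\ref{Wolk} provides.
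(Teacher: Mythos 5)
Your implications $(2)\Rightarrow(3)$ and $(3)\Rightarrow(1)$ are fine; the latter is exactly the paper's Theorem \ref{MC->ATR} (form $\LL_0\oplus\LL_1$, observe that a maximal chain lies entirely in one copy, and that the other copy, being itself a chain, must embed into it, giving comparability of well-orders and hence \ATR\ by Theorem \ref{comp}). The skeleton of $(1)\Rightarrow(2)$ is also the right one: a height function obtained by arithmetical transfinite recursion, an upper-bound lemma (the paper's Lemma \ref{upbnd_c}) reducing the problem to exhibiting a single chain of type $\hh(\PP)$, and a transfinite recursion to build that chain.

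The gap is at the limit stages of that recursion, and your proposed fix does not close it. Fixing a linear extension of \PP\ via \MLE\ and always picking the \emph{least admissible} element only makes the choices canonical; it does nothing to ensure that elements chosen at different stages are comparable \emph{in \PP}, since being small in a linear extension says nothing about $\leq_P$. (The naive bottom-up recursion already stalls at successor stages: an element of rank $\alpha+1$ need not lie above the particular element of rank $\alpha$ chosen earlier.) The real obstacle is combinatorial, not a matter of choice: at a limit $\lambda=\sup_n\lambda_n$ one has chains realizing the heights $\lambda_n$, built below elements $x_i$ that must themselves first be arranged into a $<_P$-increasing sequence (already an application of the wpo property via Lemma \ref{wqo(set)}), and these chains need not cohere with one another. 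The paper glues them with Lemma \ref{Schmidt} (Schmidt's lemma): one marks in the $i$-th chain the elements $y_i^j$ sitting at the prescribed heights $\lambda_{n_{j-1}}$ and applies the effective Ramsey-type Lemma \ref{wqo(set)eff} repeatedly to the doubly indexed array $\set{y_i^j}{j\leq i}$ to extract a strictly increasing $g$ with $y_{g(j)}^j\leq_P y_{g(j+1)}^j$; the chain is then assembled from the blocks $D_j$ of the $g(j)$-th chain lying between consecutive markers. This essential use of the wpo property, together with the bookkeeping needed to keep the whole recursion arithmetical (the $\om$-jump of Lemma \ref{Schmidt} being unwound into $\om+1$ arithmetic steps), is the actual content of the forward direction and is absent from your sketch. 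Note also that the paper runs the recursion on the tree $\Desc(\PP)$ of descending sequences rather than on the elements of \PP\ directly, precisely so that the chain below a given finite descending sequence can be built locally and then spliced.
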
\smallskip

Theorem \ref{main} is connected to the following results which are due
to Antonio Montalb\'{a}n (\cite{Mont07}).

\begin{theorem}\label{Montalban}
Every computable wpo has a computable maximal linear extension, yet
there is no hyperarithmetic way of computing (an index for) a
computable maximal linear extension from (an index for) the
computable wpo.
\end{theorem}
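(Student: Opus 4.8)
The plan is to establish the two halves separately.

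\emph{Positive direction.} I would first show that $o(\PP)$ is a computable ordinal. Since $\PP$ is a wpo, the tree $\Bad(\PP)$ of finite bad sequences of $\PP$ is well founded, and it is computable because $\PP$ is, so $\rk(\Bad(\PP)) < \omega_1^{CK}$. Moreover, for any linear extension $L$ of $\PP$ every strictly $L$-descending finite sequence is a bad sequence of $\PP$, and $\rk_{\Bad(\PP)}(\langle z\rangle)$ dominates the position of $z$ in $L$; hence $o(\PP) = \sup_L \ot(L) \leq \rk(\Bad(\PP)) < \omega_1^{CK}$. I would then prove by transfinite induction on $\alpha = o(\PP)$ that $\PP$ has a computable maximal linear extension, following the de Jongh--Parikh analysis underlying Theorem \ref{dJP}. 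For $\alpha = 0$ take $L = \es$. If $\alpha$ is a successor, $\PP$ has a maximal element $x$, and in fact $o(\PP \setminus \{x\}) = \alpha - 1$ for \emph{every} maximal $x$ (one checks this by moving $x$ to the top of an arbitrary linear extension); by the induction hypothesis the computable wpo $\PP \setminus \{x\}$ has a computable maximal linear extension $L'$, and placing $x$ above everything in $L'$ gives a computable maximal linear extension of $\PP$ --- computable because $x$ is a fixed number. If $\alpha$ is a limit, $\PP$ has no maximal element; here one uses the de Jongh--Parikh decomposition of $\PP$ --- via $o(\QQ \sqcup \RR) = o(\QQ)\oplus o(\RR)$, $o(\QQ\times\RR) = o(\QQ)\otimes o(\RR)$, and the Cantor normal form of $\alpha$ --- into computable sub-wpos of strictly smaller maximal order type, applies the induction hypothesis, and recombines computably. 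This argument is non-uniform: it proves that a computable maximal linear extension exists, but it does not compute an index for one from an index for $\PP$ --- and, by the second half, it cannot.

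\emph{Negative direction.} Suppose for contradiction that there is a hyperarithmetic function $g$ such that, whenever $e$ is an index for a computable wpo, $g(e)$ is an index for a computable maximal linear extension of it. Following the proof in Theorem \ref{main} that \MLE\ restricted to disjoint unions of two linear orders implies \ATR, I would construct a uniformly computable family of computable wpos of the form $A \sqcup B$, with $A$ and $B$ genuine well orders, such that the \emph{maximal} linear extensions of $A \sqcup B$ --- the interleavings of $A$ and $B$ of order type $\ot(A)\oplus\ot(B)$, rather than arbitrary interleavings --- encode the solution of an instance of a problem of \ATR-strength that provably has no uniformly hyperarithmetic solution, e.g.\ a $\SI11$-separation. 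Applying $g$ to these instances and reading the encoded solution off the resulting extension is a hyperarithmetic process, so it would solve that problem hyperarithmetically, a contradiction.

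I expect the coding in the negative direction to be the main obstacle. One must arrange \emph{simultaneously} that $A \sqcup B$ is \emph{genuinely} a wpo --- i.e.\ that $A$ and $B$ are genuine well orders, since \MLE\ says nothing about non-wpo's --- and that it is exactly the passage to a \emph{maximal} extension, not just any extension, that exposes the hard information; this forces the ill-foundedness that would otherwise obstruct the encoding to cancel inside the construction, which is precisely the delicacy present in the hard direction of Theorem \ref{main}, on which I would lean. The remaining point requiring care is the limit case of the positive direction, where the de Jongh--Parikh decomposition must be carried out keeping everything computable.
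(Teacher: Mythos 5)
First, a point of reference: the paper does not prove this theorem at all --- it is quoted from Montalb\'an's paper \cite{Mont07} purely as motivation --- so there is no proof of record here to compare you against; judged on its own terms, your sketch has genuine gaps in both halves. In the positive direction the base and successor cases are fine (and your bound $o(\PP)\leq\rk(\Bad(\PP))<\omega_1^{CK}$ is exactly the paper's Lemma \ref{upbnd}), but the limit case, which is the entire content of the result, is not yet an argument. Two things go wrong there. First, the de Jongh--Parikh decomposition of a wpo of limit type is not into computably given pieces: the relevant subsets (a downward closed set of prescribed order type, or the sets $\PP_{\langle x\rangle}$ for a carefully chosen sequence of $x$'s) are selected using rank information about $\Bad(\PP)$, which is precisely the non-effective ingredient that the second half of the theorem says cannot be eliminated uniformly. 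Second, and independently: at a limit $\lambda=\sup_n\lambda_n$ you must glue infinitely many pieces into something like $\sum_n\QQ_n$ (compare the limit case of the paper's Theorem \ref{ATR->MLE}); a non-uniform induction hypothesis gives you each $\QQ_n$ computable separately, but an infinite sum of computable orders with no computable sequence of indices need not be computable. Arranging that the limit step still outputs a single computable object is the real work of Montalb\'an's proof, and it is absent from the sketch.

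In the negative direction, the plan to recycle the reversal of Theorem \ref{main} cannot work as stated. The partial orders used in Sections \ref{sect:ACA} and \ref{sect:ATR} are genuinely \emph{not} wpo's: the order $\LL$ of Lemma \ref{lemma:0'} has type $\om+\om^*$ and hence has descending sequences, and the sequence of pairwise non-embeddable well-orders in Theorem \ref{MLE->ATR} exists only inside a model in which \ATR\ fails. Those orders behave like wpo's only from the internal viewpoint of a deficient $\om$-model, whereas the non-uniformity claim must be established in the real world, where every instance fed to the putative hyperarithmetic procedure has to be an actual computable wpo. You flag this tension yourself (``the ill-foundedness \dots\ must cancel inside the construction''), but that sentence states the problem rather than solving it; what is needed is a different, directly recursion-theoretic construction (e.g.\ a diagonalization against each level of the hyperarithmetic hierarchy, using the recursion theorem), not a transcription of the reverse-mathematical argument.
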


Notice that the first part of Theorem \ref{Montalban} does not imply
that Theorem \ref{dJP} is true in the $\om$-model of computable sets
(in fact Theorem \ref{main} implies that this is not the case), as
there exists computable partial orders which are not wpo's but that
\lq\lq look\rq\rq\ wpo's in that model. The second part of Theorem
\ref{Montalban} suggests \ATR\ as a lower bound for the strength of
Theorem \ref{dJP}. However we are not able to use Montalb\'{a}n's proof
(which assumes Theorem \ref{dJP}) in our proof of $(2) \implies (1)$ of
Theorem \ref{main}.

Theorem \ref{dJP} obviously suggests explicitly computing the maximal
order types of different wpo's. In \cite{JP} de Jongh and Parikh
already computed the maximal order type of the wpo investigated by
Higman (\cite{Hig52}). Immediately afterwards Schmidt studied maximal
order types in her Habilitationsschrift (\cite{Schmidt}) and she gave
upper bounds for the maximal order types of the wpo's investigated by
Kruskal (\cite{Kru60}) and Nash-Williams (\cite{Nas65}) (although the
latter proof is flawed and apparently has not been fixed yet). Much
more recently the first author and Montalb\'{a}n (\cite{MM}) computed the
maximal order type of the scattered linear orders of finite Hausdorff
rank under embeddability.

The use of maximal order types to calibrate the strength of statements
about wpo's in reverse mathematics is crucial. Harvey Friedman (see
\cite{Kruskalthm}) used the maximal order type of the relevant wpo to
prove that Kruskal's theorem cannot be proved in \ATR. Further
extensions of Friedman's method were then used to show that Robertson
and Seymour's result about graph minors is not provable in \PCA\
(\cite{FRS}). Steve Simpson (\cite{Sim88}) used the maximal order type
(computed by way of \lq\lq reifications\rq\rq) of certain wpo's to
establish the strength of the Hilbert basis theorem. In \cite{MM} the
computation of the maximal order type of the scattered linear orders of
finite Hausdorff rank is instrumental in the reverse mathematics
results about the restriction of Fra\"\i ss\'e's conjecture to those
linear orders.\smallskip

Let us mention that in the literature the notion of wqo is probably
more common than that of wpo. Well quasi orders are defined by applying
the definition of wpo given above to a quasi order (i.e.\ a binary
relation which is reflexive and transitive, but not necessarily
anti-symmetric). Since a quasi order can always be turned into a
partial order by taking the quotient with respect to the equivalence
relation induced by the quasi order, there is nothing lost in dealing
with wpo's rather than wqo's. Moreover, for the purposes of this paper
it is more convenient to deal with partial orders (e.g.\ the definition
of linear extension of a quasi order is more cumbersome).\smallskip

We now explain the organization of the paper. In Section
\ref{sect:po} we detail the formalization of partial and linear
orders in subsystems of second order arithmetic and define \MLE. In
Section \ref{sect:forward} we begin the proof of Theorem \ref{main}
by showing that \ATR\ proves \MLE. Our proof of \MLE\ is related to
the proof of Theorem \ref{dJP} in \cite{KT} and in some sense simpler
than those of \cite{JP} and \cite{Harz}. In Section \ref{sect:ACA} we
start the proof of the reversal by showing that \RCA\ + \MLE\ implies
\ACA. The reversal is completed in Section \ref{sect:ATR} by arguing
in \ACA\ that \MLE\ implies \ATR. In these two sections \MLE\ is
applied only to partial orders which are the disjoint union of two
linear orders. In Section \ref{Sect:MC} we prove Theorem \ref{MC}. To
show that \ATR\ proves \MC\ we apply the ideas of Section
\ref{sect:forward} to chains (the resulting proof is similar to the
proof of Theorem \ref{Wolk} in \cite{Schm81}), while the reversal (in
which \MC\ is applied to a disjoint union of two linear orders) is
straightforward.

\section{Partial and linear orders in subsystems of second order
arithmetic}\label{sect:po}

The formalization of the notion of linear order in subsystems of second
order arithmetic is straightforward and can be carried out in \RCA\
(see e.g.\ \cite{wqobqo}). We typically write $\LL = (L, {\leq_L})$ to
denote a linear order defined on the set $L$ with order relation
$\leq_L$. The corresponding irreflexive relation is denoted by $<_L$.
If $x \in L$ we write $L_{(\leq_L x)} = \set{y \in L}{y \leq_L x}$.
Similarly, $L_{(\geq_L x)} = \set{y \in L}{y \geq_L x}$. If $x,y \in L$
we write $[x,y]_\LL$ to denote the set $\set{z \in L}{x \leq_L z \leq_L
y}$. A specific linear order is $\om = (\N, {\leq})$.

In \RCA\ we define \emph{well-orders} as the linear orders which have
no descending chains. In \cite{ordsup} Hirst studied the equivalence
between this definition of well-order and other possible (classically
equivalent) definitions. An element of a well-order is often identified
with the restriction of the well-order to the strict predecessors of
the element. For a survey of the provability of results about
well-orders in subsystems of second order arithmetic see
\cite{hirst-survey}.

An important relation between linear orders is embeddability: $\LL_0$
\emph{embeds} into $\LL_1$ (and we write $\LL_0 \preceq \LL_1$) if
there exists an order preserving function (also called an
\emph{embedding}) from the domain of $\LL_0$ to the domain of $\LL_1$.
We write $\LL_0 \equiv \LL_1$ when $\LL_0 \preceq \LL_1 \preceq \LL_0$,
and $\LL_0 \prec \LL_1$ when $\LL_0 \preceq \LL_1$ and $\LL_1 \npreceq
\LL_0$. The following Theorem shows that \ATR\ is necessary to show
that well orders are comparable under embeddability. (The equivalence
between (1) and (2) is proved in \cite{wcwo}, while the equivalence
between (1) and (3) was obtained in \cite{Shore93}.)

\begin{theorem}\label{comp}
Over \RCA, the following are equivalent:
\begin{enumerate}
  \item \ATR;
  \item if $\LL_0$ and $\LL_1$ are well-orders then either $\LL_0
      \preceq \LL_1$ or $\LL_1 \preceq \LL_0$;
  \item if for every $n$ $\LL_n$ is a well-order then there exist
      $i \neq j$ such that $\LL_i \preceq \LL_j$.
\end{enumerate}
\end{theorem}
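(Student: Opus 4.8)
The plan is to prove Theorem~\ref{comp} by establishing the cycle of implications $(1) \implies (3) \implies (2) \implies (1)$, citing the two source papers for the parts that are already in the literature and only sketching the arguments here.

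First I would prove $(1) \implies (3)$, which is the "positive" direction and follows from the known fact (provable in \ATR) that any sequence of well-orders embeds into a single well-order — essentially one forms a suitable sum or supremum $\LL$ of the $\LL_n$ by transfinite recursion (available in \ATR), obtains order-preserving maps $f_n \colon \LL_n \to \LL$, and then applies comparability of $\LL_n$ and $\LL$ (or directly the structure of the sum) together with a pigeonhole-type argument: if no $\LL_i$ embedded into any $\LL_j$ with $i \neq j$, then each $\LL_n$ would be strictly longer than all the others, which is impossible for a linear order of well-orders under $\preceq$. Alternatively, and more cleanly, $(1) \implies (2)$ is exactly the content of \cite{wcwo}, and $(2) \implies (3)$ is a short argument: given $(2)$, if for all $i \neq j$ we had $\LL_i \npreceq \LL_j$, then comparability forces $\LL_j \prec \LL_i$ for all such pairs, so in particular $\LL_1 \prec \LL_0$, $\LL_2 \prec \LL_1$, \dots, producing an infinite descending sequence under $\prec$; but $\prec$ on well-orders is well-founded (provably in \ACA, which $(2)$ implies, since $(2)$ easily gives \ACA), a contradiction. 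So the real work splits as: $(1) \implies (2)$ from \cite{wcwo}, $(2) \implies (3)$ as just sketched, and then the reversal.

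For the reversal I would prove $(3) \implies (1)$ and $(2) \implies (1)$, both of which are established in \cite{wcwo} and \cite{Shore93} respectively; since $(2)$ and $(3)$ each imply \ACA\ by a standard argument (embeddability comparability lets one define the range of an injection, hence arithmetic comprehension), one works in \ACA\ and shows that comparability of well-orders yields arbitrary transfinite iterations of the Turing jump, i.e.\ \ATR. The key technical device is the standard one for reversals to \ATR: given an instance of $\Sigma^1_1$ separation or of the relevant pseudohierarchy construction, one codes it into a pair (or sequence) of linear orders that are well-orders precisely when the construction succeeds, arranged so that knowing which one embeds into the other decides the separation. I would simply invoke the constructions of \cite{Shore93} for $(2) \implies (1)$ and of \cite{wcwo} for $(3) \implies (1)$ rather than reproducing them.

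The main obstacle — or rather, the point requiring the most care — is that the theorem as stated is a compilation of results from two different papers using two different auxiliary notions (comparability of a pair versus the Ramsey-type statement for sequences), so the honest approach is to assemble them through the intermediate implication $(2) \implies (3)$ and to check that $(3)$ alone suffices to recover \ACA\ (needed so that the well-foundedness of $\prec$ and the pseudohierarchy arguments are available); the former is routine, and the latter is handled exactly as in \cite{wcwo}. Accordingly I would not attempt an independent proof but would present the implication cycle, give the two-line argument for $(2) \implies (3)$, and defer the two hard reversals to the cited literature, noting that all the constructions go through in \RCA\ (or \ACA) as required.
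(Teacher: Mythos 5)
The paper offers no proof of this theorem beyond the parenthetical citations, so your strategy of deferring the hard reversals to the literature is essentially what the authors do. One concrete correction: you have the attributions backwards --- the pairwise comparability statement (2) is the one proved equivalent to \ATR\ in \cite{wcwo}, while the sequence version (3) is the contribution of \cite{Shore93}, so as written your proposal sends the reader to the wrong source for each reversal. Also, your bridge $(2) \implies (3)$ is more elaborate than needed and its intermediate step is confused: the negation of (3) asserts $\LL_i \npreceq \LL_j$ for \emph{all} $i \neq j$, which is symmetric in $i$ and $j$ and therefore contradicts (2) applied to the single pair $\LL_0, \LL_1$ outright, with no descending-sequence or well-foundedness-of-$\prec$ argument required.
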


An immediate, yet very useful, consequence of comparability of
well-orders is the following.

\begin{corollary}\label{preceqD}
In \ATR\ if $\LL_0$ and $\LL_1$ are well-orders the formulas $\LL_0
\preceq \LL_1$, $\LL_0 \prec \LL_1$ and $\LL_0 \equiv \LL_1$ are \DE11.
\end{corollary}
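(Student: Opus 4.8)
The plan is to produce, for each of the three formulas and over $\ATR$ (with $\LL_0,\LL_1$ as set parameters), an equivalent $\PI11$ formula; since each of the three is visibly $\SI11$, or at worst a finite Boolean combination of $\SI11$ formulas, this gives the $\DE11$ classification. I would first reduce everything to the case of $\preceq$: because $\LL_0\prec\LL_1$ is $\LL_0\preceq\LL_1\wedge\LL_1\npreceq\LL_0$ and $\LL_0\equiv\LL_1$ is $\LL_0\preceq\LL_1\wedge\LL_1\preceq\LL_0$, it suffices to show that ``$\LL_0\preceq\LL_1$'', for well-orders $\LL_0,\LL_1$, is equivalent over $\ATR$ to a $\PI11$ formula, and then invoke closure of $\DE11$ under finite Boolean combinations.

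The heart of the argument is the claim that, provably in $\ATR$, for well-orders $\LL_0$ and $\LL_1$,
\[
\LL_0\preceq\LL_1 \iff \text{no order isomorphism maps } \LL_1 \text{ onto a proper initial segment of } \LL_0 .
\]
The right-hand side asserts that there is no $x\in L_0$ and no set coding an order isomorphism from $L_1$ onto $\set{y\in L_0}{y<_{L_0}x}$; being the negation of a $\SI11$ formula it is $\PI11$, which together with the obvious $\SI11$ form of the left-hand side yields the desired $\DE11$ classification of $\preceq$ between well-orders. For the implication from left to right I argue by contradiction: an isomorphism of $\LL_1$ onto $\set{y\in L_0}{y<_{L_0}x}$ composed with an embedding of $\LL_0$ into $\LL_1$ gives an embedding $h$ of $\LL_0$ into that proper initial segment of itself, and then $x>_{L_0}h(x)>_{L_0}h(h(x))>_{L_0}\cdots$ (the sequence being defined by primitive recursion, available already in $\RCA$) is a descending sequence in $\LL_0$, contradicting that $\LL_0$ is a well-order. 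For the converse I use comparability of well-orders (Theorem \ref{comp}, at our disposal since we work in $\ATR$): either $\LL_0\preceq\LL_1$, and we are done, or $\LL_1\preceq\LL_0$, in which case I must straighten the given embedding into an isomorphism onto an initial segment. This I do by arithmetic transfinite recursion along $\LL_1$ (the characteristic feature of $\ATR$), defining $h(a)$ to be the $\leq_{L_0}$-least element of $L_0$ not in $\set{h(b)}{b<_{L_1}a}$; an arithmetic transfinite induction along $\LL_1$, comparing $h$ with the given embedding stage by stage, shows that $h$ is total, order preserving, and has range an initial segment of $\LL_0$. By hypothesis this initial segment is not proper, so $h$ is onto $L_0$ and $h^{-1}$ embeds $\LL_0$ into $\LL_1$; thus $\LL_0\preceq\LL_1$.

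I expect the straightening step — producing, inside $\ATR$, an isomorphism onto an initial segment from a bare embedding between well-orders — to be the only nonroutine point; it is in essence the comparison theorem for well-orders provable in $\ATR$, and it is exactly here (together with the appeal to Theorem \ref{comp}) that the full strength of $\ATR$ is used. Everything else is bookkeeping: once $\preceq$ between well-orders is known to be $\DE11$, the formulas $\LL_0\prec\LL_1$ and $\LL_0\equiv\LL_1$ are finite Boolean combinations of such instances, hence $\DE11$ as well.
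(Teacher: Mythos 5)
Your argument is correct, and in outline it is the same as the paper's: note that $\LL_0 \preceq \LL_1$ is \SI11, use comparability of well-orders (Theorem \ref{comp}) to replace it by an equivalent \PI11\ formula, and then obtain $\prec$ and $\equiv$ as Boolean combinations. The difference lies in the choice of the complementary \PI11\ formula. The paper takes \lq\lq$\LL_1+1 \npreceq \LL_0$\rq\rq, so both sides of the equivalence are embeddability statements and the proof is a one-line appeal to Theorem \ref{comp} together with the no-embedding-into-a-proper-initial-segment fact (which you also use, correctly, to manufacture the descending sequence $x >_{L_0} h(x) >_{L_0} h(h(x)) >_{L_0} \cdots$). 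You instead take \lq\lq no isomorphism maps $\LL_1$ onto a bounded proper initial segment of $\LL_0$\rq\rq, which obliges you to upgrade the bare embedding supplied by Theorem \ref{comp} to an isomorphism onto an initial segment by arithmetical transfinite recursion along $\LL_1$ --- in effect re-proving the strong comparability theorem CWO (\cite[Theorem V.6.8]{Sim09}) inside \ATR. That straightening step is carried out correctly (totality of the recursion via the stage-by-stage bound $h(a) \leq_{L_0} e(a)$ against the given embedding $e$ is exactly the right invariant), but it is avoidable: you could simply cite CWO, or use the paper's formula. On the other hand, your version makes explicit a straightening that the paper's terse appeal to Theorem \ref{comp} quietly relies on, so nothing is lost in rigor; the cost is only length.
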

\begin{proof}
The formula $\LL_0 \preceq \LL_1$ is clearly \SI11. In \ATR, by Theorem
\ref{comp}, if $\LL_0$ and $\LL_1$ are well-orders $\LL_0 \preceq
\LL_1$ is equivalent to $\LL_1+1 \npreceq \LL_0$. The latter formula is
clearly \PI11, and hence $\LL_0 \preceq \LL_1$ is \DE11.

From this and the definitions it follows that $\LL_0 \prec \LL_1$ and
$\LL_0 \equiv \LL_1$ are also \DE11.
\end{proof}

In \RCA\ we can define basic operations on linear orders. Suppose
$\LL_n = (L_n, {\leq_{L_n}})$ is a linear order for every $n$. We may
also assume that the $L_n$'s are pairwise disjoint. Then we define the
linear order $\LL_0 + \LL_1 = (L_0 \cup L_1, {\leq_{L_0+L_1}})$ by
setting $x \leq_{L_0+L_1} y$ if and only if $x \in L_0$ and $y \in L_1$
or $x,y \in L_n$ and $x \leq_{L_n} y$ for some $n<2$. The infinitary
generalization of this operation $\sum_n \LL_n = (\bigcup_n L_n,
{\leq_{\sum_n L_n}})$ is defined similarly. The linear order $\LL_0
\cdot \LL_1 = (L_0 \times L_1, {\leq_{L_0 \cdot L_1}})$ is defined by
$(x_0,x_1) \leq_{L_0 \cdot L_1} (y_0,y_1)$ iff either $x_1 <_{L_1} y_1$
or $x_1 = y_1$ and $x_0 \leq_{L_0} y_0$. \RCA\ proves that if the
$\LL_n$'s are well-orders then $\LL_0 + \LL_1$, $\sum_n \LL_n$ and
$\LL_0 \cdot \LL_1$ are also well-orders.

In \RCA\ we can also define the exponentiation ${\LL_0}^{\LL_1}$ of two
linear orders (details are e.g.\ in \cite{hirst-survey}). However \RCA\
cannot prove that when $\LL_0$ and $\LL_1$ are well-orders
${\LL_0}^{\LL_1}$ is a well-order. In fact this statement is equivalent
to \ACA\ over \RCA\ (\cite[p.\ 299]{Girard}, see \cite{Hirst94} for a
direct proof).

Using ordinal exponentiation we can define Cantor normal forms, and
Jeff Hirst (\cite[Theorem 5.2]{Hirst94}) proved the following:

\begin{theorem}\label{CNF}
Over \RCA, the following are equivalent:
\begin{enumerate}
  \item \ATR;
  \item every well order has a \emph{Cantor normal form}, i.e.\ it
      is equivalent to a finite sum of exponentials with base $\om$
      and nonincreasing exponents.
\end{enumerate}
\end{theorem}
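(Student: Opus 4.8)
The plan is to establish the two implications $(1)\Rightarrow(2)$ and $(2)\Rightarrow(1)$ separately. Both directions lean on the tools recorded above: comparability of well-orders (Theorem~\ref{comp}, equivalent to \ATR) and its consequence that in \ATR\ the relations $\preceq$, $\prec$, $\equiv$ between well-orders are $\DE11$ (Corollary~\ref{preceqD}), together with Hirst's theorem that over \RCA\ the statement ``$\om^\beta$ is a well-order whenever $\beta$ is'' is equivalent to \ACA\ (\cite{Hirst94}, \cite{Girard}), which \ATR\ implies.

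For $(1)\Rightarrow(2)$, I would argue in \ATR\ and, given a well-order $\alpha$, peel off the terms of its Cantor normal form one at a time. The crucial step is, for a nonempty well-order $\delta$, to identify its leading exponent: among the initial segments $\mu$ of $\delta$ (allowing $\mu=\delta$) with $\om^\mu\preceq\delta$ there is a largest, call it $\beta$. To see this, note that by Theorem~\ref{comp} and Corollary~\ref{preceqD} the collection of such $\mu$, viewed as a set of points of $\delta+1$, is $\DE11$, hence exists by $\DE11$ comprehension; it is an initial segment of $\delta+1$ which, by continuity of exponentiation together with comparability of well-orders, is closed under suprema, so it is not a proper initial segment cut at a limit point and therefore has a largest element. (The case $\beta=\delta$, which occurs exactly when $\om^\delta\preceq\delta$, is the case that $\delta$ is an $\varepsilon$-number.) Next let $n$ be the largest natural number with $\om^{\beta}\cdot n\preceq\delta$ (finite, since $\om^\beta\cdot\om=\om^{\beta+1}\npreceq\delta$) and let the remainder $\gamma$ be the final segment $\set{x\in\delta}{\om^{\beta}\cdot n\preceq\delta_{(<x)}}$, again a $\DE11$ set; then $\delta\equiv\om^{\beta}\cdot n+\gamma$ with $\gamma\prec\om^\beta$, in particular $\gamma\prec\delta$. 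Iterating from $\gamma_0=\alpha$ produces well-orders $\alpha=\gamma_0\succ\gamma_1\succ\gamma_2\succ\cdots$ with $\gamma_i\equiv\om^{\beta_{i+1}}\cdot n_{i+1}+\gamma_{i+1}$ and $\beta_1\succ\beta_2\succ\cdots$; since \ATR\ proves there is no infinite $\prec$-decreasing sequence of well-orders (this follows from comparability of well-orders), the iteration halts at some $\gamma_k=\es$, and telescoping gives $\alpha\equiv\om^{\beta_1}\cdot n_1+\cdots+\om^{\beta_k}\cdot n_k$, a Cantor normal form.

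For $(2)\Rightarrow(1)$, I would first show that \RCA\ together with (2) proves \ACA. By Hirst's theorem it is enough to construct, from an arbitrary one-to-one $f\colon\N\to\N$, a well-order $L_f$ such that any Cantor normal form of $L_f$ computes the range of $f$; since (2) supplies such a normal form, the range of $f$ exists, which gives \ACA\ (this is a pseudo-well-order construction of the kind used for the reversals in \cite{Hirst94}, \cite{Girard}). Granting \ACA, I then derive comparability of well-orders, hence \ATR\ by Theorem~\ref{comp}. Given well-orders $\alpha$ and $\beta$, the linear order $\om^\alpha+\om^\beta$ is now a well-order, so by (2) it is equivalent to some $\om^{d_1}+\cdots+\om^{d_p}$ with nonincreasing exponents. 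The decisive point is purely ordinal-arithmetical and provable in \ACA: no power of $\om$ lies strictly between $\om^\eta$ and $\om^{\eta+1}=\om^\eta\cdot\om$, and $\om^\eta+\om^\zeta\equiv\om^\zeta$ precisely when $\eta\prec\zeta$. From these one reads off that $p=1$ forces $\alpha\prec\beta$ (hence $\alpha\preceq\beta$), while $p\ge2$ forces $\beta\preceq\alpha$; in either case $\alpha$ and $\beta$ are comparable.

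I expect the two main obstacles to be: for $(1)\Rightarrow(2)$, carrying out ``ordinal subtraction'' inside \ATR, i.e.\ producing $\beta$, $n$ and the remainder $\gamma$ as genuine sets with $\delta\equiv\om^\beta\cdot n+\gamma$ and $\gamma\prec\om^\beta$ (and handling the $\varepsilon$-number case), which is exactly where comparability and $\DE11$ comprehension do the work; and, for $(2)\Rightarrow(1)$, the combinatorial design of a well-order coding a given one-to-one function so that (2) forces its range to exist, which is what yields \ACA\ from (2).
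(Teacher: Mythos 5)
First, note that the paper does not prove Theorem \ref{CNF} at all: it is imported from Hirst \cite[Theorem 5.2]{Hirst94}, so there is no in-paper proof to compare against. Judged on its own merits, your forward direction $(1)\Rightarrow(2)$ is the standard peeling argument and looks sound: the collection of exponents $\mu$ with $\om^\mu\preceq\delta$ is \DE11\ by Corollary \ref{preceqD}, forms an initial segment of $\delta+1$ closed under suprema, hence has a maximum; the remainder is a \DE11-definable final segment; and the iteration terminates because \ATR\ (via comparability and \SAC) refutes infinite $\prec$-descending sequences of well-orders. The remaining work is bookkeeping: the $\om$-length recursion producing the $\gamma_i$ has \DE11\ steps, so assembling the whole sequence requires \SAC\ (available in \ATR) rather than arithmetical recursion alone; you flag this obstacle, and it is surmountable.

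The genuine gap is in $(2)\Rightarrow(1)$, at the step ``\RCA\ plus (2) proves \ACA.'' You assert the existence of a linear order $L_f$, provably well-ordered in \RCA, any Cantor normal form of which computes $\operatorname{ran}(f)$, and you point to the pseudo-well-order constructions used to reverse ``$\om^\LL$ is a well-order'' to \ACA. But those constructions work by a different mechanism: they exhibit a computable descending sequence in $\om^{L_f}$ unless $\operatorname{ran}(f)$ exists. Here you must instead extract $\operatorname{ran}(f)$ from whatever data a model of \RCA\ happens to offer as a normal form --- some well-orders $d_1\succeq\dots\succeq d_p$ and a bi-embedding --- and nothing forces these to be the classically correct exponents, nor even lets \RCA\ locate, say, the least elements of the $d_i$. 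An embedding of $L_f$ into $\sum_{i\le p}\om^{d_i}$ yields at best upper bounds on the sizes of initial segments of $L_f$, and an upper bound on the size of a \SI01\ set does not compute it. So this step needs an actual construction together with a correctness argument, and none is given; without \ACA\ the rest of your reversal cannot start, since \RCA\ cannot certify that $\om^\alpha+\om^\beta$ is a well-order and Theorems \ref{indec} and \ref{Hirst} are only stated in \ACA. A smaller repair: your criterion ``$\om^\eta+\om^\zeta\equiv\om^\zeta$ precisely when $\eta\prec\zeta$'' presupposes the comparability you are trying to prove; the clean route in \ACA\ is to observe that the bi-embedding with $\sum_{i\le p}\om^{d_i}$ gives $\alpha\preceq d_1$ and $\beta\preceq d_1$ (indecomposability of $\om^\alpha$ and $\om^\beta$ plus Theorem \ref{Hirst}) and also $d_1\preceq\alpha$ or $d_1\preceq\beta$ (indecomposability of $\om^{d_1}$), whence $\alpha$ and $\beta$ are comparable with no case split on $p$.
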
\smallskip

We now turn to partial orders, which are formalized in a way similar
to linear orders. We typically write $\PP = (P, {\leq_P})$ for a
partial order defined on the set $P$ with order relation $\leq_P$. If
$\PP_0$ and $\PP_1$ are partial orders with disjoint domains, $\PP_0
+ \PP_1$ is defined in \RCA\ as in the case of linear orders. We also
define the disjoint union $\PP_0 \oplus \PP_1 = (P_0 \cup P_1,
{\leq_{P_0 \oplus P_1}})$ by setting $x \leq_{P_0 \oplus P_1} y$ if
and only if $x,y \in P_n$ and $x \leq_{P_n} y$ for some $n<2$.

\begin{definition}\label{lext}
Within \RCA, if $\PP = (P,{\leq_P})$ is a partial order, \emph{a linear
extension of \PP} is a linear order $\LL = (P,{\leq_L})$ such that $x
\leq_P y$ implies $x \leq_L y$ for every $x,y \in P$. We denote by
$\Lin(\PP)$ the class of all linear extensions of \PP\ (this is just a
convenient shorthand: $\Lin(\PP)$ does not exist in second order
arithmetic).
\end{definition}

We will often deal with linear extensions of partial orders which are
the disjoint sum of two linear orders.

\begin{definition}
For \II\ and \JJ\ linear orders, we call any element of $\Lin (\II
\oplus \JJ)$ a \emph{shuffle of \II\ and \JJ}.
\end{definition}

Now we can formally define the notion of wpo in \RCA.

\begin{definition}
Within \RCA, a partial order $\PP = (P,{\leq_P})$ is a \emph{wpo} if
for every $f:\N \to P$ there exists $i<j$ such that $f(i) \leq_P f(j)$.
\end{definition}

The different characterizations of wpo have been studied from the
viewpoint of reverse mathematics in \cite{wqobqo,CMS}: it turns out
that not all equivalences are provable in \RCA, but that \WKL\
augmented with the chain-antichain principle \CAC\ (i.e.\ the
statement that every infinite partial order has either an infinite
chain or an infinite antichain) suffices. (Thus all definitions of
wpo are equivalent in, say, \ACA). In particular we have the
following results (\cite[Lemma 3.12, Theorem 3.17, Corollary
3.4]{CMS}).

\begin{lemma}\label{wqo(ext)}
\RCA\ proves that every linear extension of a wpo is a well-order.
\WKL\ proves that if a partial order is such that all its linear
extensions are well-orders, then it is a wpo.
\end{lemma}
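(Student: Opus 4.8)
The plan is to prove both halves of Lemma~\ref{wqo(ext)} directly, since the first is quite elementary and the second is where the real content lies.

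For the first assertion I would argue in \RCA\ by contradiction. Suppose $\PP$ is a wpo and $\LL \in \Lin(\PP)$ is not a well-order, as witnessed by a function $f:\N \to P$ with $f(i+1) <_L f(i)$ for every $i$. Since $\LL$ extends $\PP$, for any $i<j$ we have $f(j) <_L f(i)$, so $f(i) \not\leq_P f(j)$ (if $f(i) \leq_P f(j)$ then $f(i) \leq_L f(j)$, contradicting $f(j) <_L f(i)$ together with antisymmetry, as $f$ is injective being strictly $<_L$-decreasing). Thus $f$ witnesses that $\PP$ is not a wpo, a contradiction. This uses only $\Delta^0_1$ comprehension (to form nothing new — the argument is just first order about the given $f$) and is safely within \RCA.

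For the second assertion I would argue in \WKL. Let $\PP = (P,{\leq_P})$ be a partial order all of whose linear extensions are well-orders, and suppose toward a contradiction that $\PP$ is not a wpo, i.e.\ there is $f:\N \to P$ with $f(i) \not\leq_P f(j)$ for all $i<j$. Here I would invoke \WKL\ through the chain-antichain style analysis: first observe that \WKL\ proves the infinite sequence $(f(i))$ has either an infinite descending chain in $\PP$ (in the sense $f(i_{k+1}) <_P f(i_k)$) or an infinite antichain (actually we need a cleaner dichotomy — see below). If there is an infinite $\leq_P$-descending sequence, then the identity linear extension already fails to be a well-order once we... no: a descending $\PP$-chain need not descend in every linear extension, but it does descend in \emph{some} linear extension, and more simply a descending $\PP$-chain is itself a witness, via any linear extension, that that extension (restricted appropriately) isn't well-founded — here I need to be careful and instead build a specific bad linear extension. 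The cleanest route: from the bad sequence $f$, thin it using \WKL\ to an infinite set $S = \{s_0 < s_1 < \cdots\}$ on which $(f(s_k))$ is either a $\PP$-antichain or strictly $\PP$-descending. In the antichain case, put all the $f(s_k)$ in reverse order ($f(s_k) >_L f(s_{k+1})$) at the bottom and linearly extend the rest arbitrarily above — this is a linear extension of $\PP$ (an antichain imposes no constraints among its members, and \WKL/\RCA\ lets us extend any partial order to a linear order once we've fixed a consistent partial picture, which is where \WKL\ is genuinely used) and it has an infinite descending sequence, contradicting the hypothesis. In the descending case $f(s_k)$ already descends in $\PP$, hence in every linear extension, contradicting the hypothesis immediately.

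The main obstacle is justifying, in \WKL, the two ingredients: (i) the thinning of the bad sequence to a subsequence that is either a $\PP$-antichain or $\PP$-descending, and (ii) the existence of a linear extension of a partial order that realizes a prescribed order on a given antichain. For (i) one uses \CAC, but since we are only claiming \WKL\ suffices, I would instead note that a bad sequence for "$\PP$ is wpo" directly yields, by a \WKL\ (König's lemma / compactness) argument on the tree of finite attempts, the required homogeneous subsequence — alternatively, and this is the intended reading, the lemma is quoted from \cite{CMS} where it is shown that \WKL\ proves the relevant characterization equivalences, so I would simply cite \cite[Lemma 3.12, Theorem 3.17, Corollary 3.4]{CMS} for the nontrivial direction rather than reprove it. For (ii), the point is that extending a partial order to a linear order in general requires more than \RCA\ (it is related to $\WKL$), and the Szpilrajn-type construction is carried out by finding an infinite path through a finitely-branching tree of finite consistent linearizations, which is exactly what \WKL\ provides. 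So my actual proof would be: prove the first sentence in \RCA\ by the contradiction argument above, and for the second sentence either give the thinning-plus-Szpilrajn argument in \WKL\ as sketched, or — since this is stated as a quoted lemma — simply defer to the cited results in \cite{CMS}.
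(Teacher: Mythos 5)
Your first paragraph is fine: the argument that a $<_L$-descending sequence in a linear extension is automatically a bad sequence for $\PP$ is exactly the standard one and lives comfortably in \RCA. Note, though, that the paper itself offers no proof of this lemma at all --- it is quoted verbatim from \cite{CMS} (Lemma 3.12 and Theorem 3.17 there), so your fallback of ``just cite \cite{CMS}'' is in fact precisely what the paper does.

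For the second half, the route you sketch has a genuine gap: the thinning of the bad sequence to a subsequence that is either a $\PP$-antichain or strictly $\PP$-descending is exactly an instance of \CAC\ (for $i<j$ badness forces $f(j)<_P f(i)$ or incomparability, and you are asking for a homogeneous set for this $2$-coloring of pairs), and \CAC\ is \emph{not} provable in \WKL: by Herrmann's result there is a computable partial order with no infinite $\Delta^0_2$ chain or antichain, so \CAC\ fails in the low-sets $\om$-model of \WKL. No ``K\"onig's lemma argument on the tree of finite attempts'' will produce the homogeneous subsequence. You have also misplaced where \WKL\ is actually used: \RCA\ already proves that every countable partial order has a linear extension (greedy insertion of $p_{s+1}$ into the finite linearization of $\{p_0,\dots,p_s\}$ works computably), so plain Szpilrajn is not the issue. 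The correct argument skips the dichotomy entirely: given the bad sequence $f$, one checks (a finite, cycle-free combinatorial lemma, using badness) that for every $s$ the restriction of $\leq_P$ to $\{p_0,\dots,p_s\}$ together with the constraints $f(j)<f(i)$ for $i<j$ admits a consistent finite linearization; the tree of such finite linearizations is bounded and infinite, and \WKL\ supplies a path, i.e.\ a linear extension of all of $\PP$ in which the entire sequence $f$ descends. The reason \WKL\ (rather than \RCA) is needed is that the transitive closure of $\leq_P$ together with the reversal constraints is only $\Sigma^0_1$, so the greedy construction cannot decide where to insert new elements; the compactness argument circumvents this. Your antichain subcase also glosses over this same consistency issue (the antichain members have $\leq_P$-predecessors and successors elsewhere in $P$, so ``put them in reverse order at the bottom'' is not literally coherent), but that becomes moot once the dichotomy is dropped.
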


\begin{lemma}\label{wqo(set)}
\RCA\ plus \CAC\ (and, a fortiori, \ACA) proves that if \PP\ is a wpo
then for every $f:\N \to P$ there exists an infinite $A \subseteq \N$
such that for all $i,j \in A$ with $i<j$ we have $f(i) \leq_P f(j)$.
\end{lemma}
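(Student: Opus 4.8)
The plan is to reduce the statement to a single application of \CAC\ to an auxiliary partial order on $\N$. I would argue in \RCA\ augmented with \CAC: let $\PP = (P,{\leq_P})$ be a wpo and fix $f:\N \to P$. Define a binary relation ${\sqsubseteq}$ on $\N$ by setting $i \sqsubseteq j$ if and only if $i \leq j$ and $f(i) \leq_P f(j)$. This relation is $\Delta^0_0$ in the parameters $f$ and $\leq_P$, so the partial order $\QQ = (\N,{\sqsubseteq})$ exists in \RCA; reflexivity is clear, antisymmetry holds because $i \sqsubseteq j$ with $i \neq j$ already forces $i < j$, and transitivity follows from transitivity of $\leq$ on $\N$ and of $\leq_P$ on $P$. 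Since $\QQ$ is an infinite partial order, \CAC\ gives either an infinite $\sqsubseteq$-chain or an infinite $\sqsubseteq$-antichain.

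First I would dispose of the antichain alternative. Suppose $B$ is an infinite $\sqsubseteq$-antichain. Working in \RCA\ (via the usual primitive-recursive construction of the principal function of an infinite set, which uses only $\Sigma^0_1$ induction) enumerate $B$ in increasing order as $b_0 < b_1 < \cdots$ and put $g(n) = f(b_n)$. For $n < m$ we have $b_n < b_m$; since $b_n$ and $b_m$ are $\sqsubseteq$-incomparable while $b_n \leq b_m$, the definition of $\sqsubseteq$ forces $f(b_n) \not\leq_P f(b_m)$, i.e.\ $g(n) \not\leq_P g(m)$. Thus $g:\N\to P$ has no pair $n<m$ with $g(n) \leq_P g(m)$, contradicting the assumption that $\PP$ is a wpo. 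Hence this alternative cannot occur.

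It remains to read off the conclusion from the chain alternative. Let $A$ be an infinite $\sqsubseteq$-chain and take any $i,j \in A$ with $i < j$. Being members of a chain, $i$ and $j$ are $\sqsubseteq$-comparable; and $j \sqsubseteq i$ is impossible since it would require $j \leq i$. Therefore $i \sqsubseteq j$, which by definition gives $f(i) \leq_P f(j)$. So $A$ is the desired infinite subset of $\N$, and the proof in \RCA\ $+$ \CAC\ is complete; the version for \ACA\ follows because \ACA\ proves \CAC\ (\cite{CMS}).

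The argument is short, and the only places calling for care — what I would regard as the ``main obstacle'' — are the routine \RCA-level bookkeeping: checking that the auxiliary relation is genuinely a partial order, so that \CAC\ (rather than some analogue for arbitrary binary relations) applies, and observing that an infinite subset of $\N$ can be enumerated in increasing order within \RCA, so that an infinite antichain really does produce a length-$\om$ sequence refuting the wpo property. Everything else is a direct unwinding of the definitions.
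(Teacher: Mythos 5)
Your proof is correct. The paper does not prove this lemma itself---it cites it from \cite{CMS}---but the remark in the proof of Lemma \ref{wqo(set)eff} confirms that the intended argument is exactly a direct application of \CAC, as you do: the auxiliary partial order $i \sqsubseteq j \iff i \leq j \land f(i) \leq_P f(j)$ is the standard reduction, the antichain alternative is correctly ruled out by producing a bad sequence, and your attention to the \RCA-level details (existence of the derived order, increasing enumeration of an infinite set) is appropriate.
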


We need to make the last statement effective, but for our purposes it
suffices to be quite coarse in this effectivization (e.g.\ we do not
use the results of \cite{CJS} or \cite{HS}).

\begin{lemma}\label{wqo(set)eff}
\ACA\ proves that there exists a construction which is uniformly
recursive in the double jump of the input and that starting from the
wpo \PP\ and $f:\N \to P$ outputs an infinite $A \subseteq \N$ such
that for all $i,j \in A$ with $i<j$ we have $f(i) \leq_P f(j)$.
\end{lemma}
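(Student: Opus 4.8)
The plan is to prove Lemma~\ref{wqo(set)eff} by effectivizing the proof of Lemma~\ref{wqo(set)} given in \cite{CMS}. Recall that the classical argument for Lemma~\ref{wqo(set)} proceeds as follows. Given a wpo $\PP$ and $f : \N \to P$, call $i \in \N$ a \emph{terminal node} if there is no $j > i$ with $f(i) \leq_P f(j)$. Since $\PP$ is a wpo, any two terminal nodes are incomparable under $\leq_P$ (if $i < i'$ were terminal with $f(i) \leq_P f(i')$, the first would not be terminal), so the set of terminal nodes is mapped by $f$ into an antichain of $\PP$; moreover, $f$ restricted to the terminal nodes is injective (if $f(i) = f(i')$ with $i < i'$ then $f(i) \leq_P f(i')$). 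Since a wpo has no infinite antichain, there are only finitely many terminal nodes; let $n$ be larger than all of them. Then, starting from any $i_0 \geq n$, every $i \geq n$ that we have already put into our set has some successor $j > i$ with $f(i) \leq_P f(j)$, and we may moreover choose $j \geq n$; iterating gives the desired infinite set $A$.

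The key steps, carried out in \ACA, are as follows. First I would observe that the predicate ``$i$ is a terminal node'' is $\Pi^0_1$ relative to $\PP$ and $f$, hence the set $T$ of terminal nodes exists by arithmetic comprehension and is recursive in the jump of the input; and $T$ is finite by the argument above (which is provable in \ACA\ since \ACA\ proves the no-infinite-antichain characterization of wpo's via Lemma~\ref{wqo(set)}, or directly). The real point is computing a bound on $T$ effectively. Here I would note that $T$ is not merely $\Pi^0_1$ but in fact its being finite lets us compute a canonical index for it recursively in the \emph{double jump}: the statement ``$T \subseteq \{0, \dots, m\}$'', i.e.\ ``$\forall i > m\ \exists j > i\ (f(i) \leq_P f(j))$'', is $\Pi^0_2$, so using the double jump we can search for the least such $m$, and since $T$ is finite such an $m$ exists. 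With this bound $m$ in hand, define $A$ by recursion: set $a_0 = m+1$, and given $a_k$, let $a_{k+1}$ be the least $j > \max(a_k, m)$ with $f(a_k) \leq_P f(j)$ (such $j$ exists precisely because $a_k > m$ means $a_k \notin T$). This recursion is primitive recursive in the double jump of the input (each step needs the bound $m$, which is obtained from the double jump, and then only a $\Sigma^0_1$ search), so the whole construction is uniformly recursive in the double jump as required, and by construction $i < j$ in $A$ implies $f(i) \leq_P f(j)$.

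I would then spell out that the verification that $A$ is infinite and has the stated closure property is immediate from the construction, and that the only appeals to the wpo hypothesis — namely that $T$ is finite — go through in \ACA\ by Lemma~\ref{wqo(set)} (or its underlying no-infinite-antichain consequence). The main obstacle, and the one step deserving care, is making precise the claim that a canonical index for the finite set $T$ (equivalently, the bound $m$) is computable from the double jump: one must check that the $\Pi^0_2$ search for the least $m$ with $T \subseteq \{0,\dots,m\}$ genuinely terminates, which is exactly where finiteness of $T$ is used, and that everything downstream is then merely $\Sigma^0_1$ in that bound, so that no further jumps are needed. Since the lemma explicitly allows us to be ``quite coarse'' in the effectivization, there is no need to optimize below the double jump, and the argument above suffices.
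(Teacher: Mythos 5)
Your argument is correct in substance but takes a genuinely different route from the paper's. The paper's proof is a one-paragraph effectivization of the Ramsey-theoretic route: Lemma \ref{wqo(set)} is derived from \CAC, hence from Ramsey's theorem for pairs, and an inspection of the proof of the latter in \ACA\ shows that a homogeneous set is computable from a branch of an infinite finitely branching tree recursive in the coloring, such a branch being computable in the double jump of the tree. You bypass Ramsey entirely with the direct ``terminal node'' argument: the set $T$ of indices $i$ with no $j>i$ satisfying $f(i)\leq_P f(j)$ is finite; a bound $m$ for $T$ is found by searching for the least $m$ with the \PI02\ property $\forall i>m\,\exists j>i\,(f(i)\leq_P f(j))$, which is decidable uniformly from the double jump; and above $m$ a greedy recursion (each step a terminating $\Sigma^0_1$ search in the raw input) produces $f(a_0)\leq_P f(a_1)\leq_P\cdots$, whence the all-pairs property follows by transitivity of $\leq_P$. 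Your version makes the provenance of the double jump transparent (one \PI02\ question per candidate bound) rather than relying on an inspection of Simpson's proof, at the cost of redoing the combinatorics. One small repair is needed: your claim that distinct terminal nodes have $\leq_P$-incomparable images is false as stated --- for terminal $i<i'$ you only get $f(i)\nleq_P f(i')$, and nothing rules out $f(i')<_P f(i)$ --- so the image of $T$ is a bad sequence, not necessarily an antichain, and the appeal to ``no infinite antichain'' (a characterization that is itself not free in weak systems) is both off-target and unnecessary. The fix is immediate: if $T$ were infinite, its increasing enumeration (available by arithmetic comprehension) composed with $f$ would be an infinite sequence witnessing the failure of the wpo definition directly. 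With that adjustment the rest of your verification --- termination of the searches, uniformity, and the existence of $A$ by $\Delta^0_1$ comprehension relative to the double jump --- goes through in \ACA.
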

\begin{proof}
Lemma \ref{wqo(set)} is proved using \CAC, which is a consequence of
Ramsey theorem for pairs. An inspection of the proof of Ramsey Theorem
in \ACA\ (\cite[Lemma III.7.4]{Sim09}) shows that a homogenous set for
a coloring of pairs is computable from any branch in an infinite
finitely branching tree which is computable in the coloring. Such a
branch is computable in the double jump of the tree.
\end{proof}

We need to formalize Theorem \ref{dJP} within \RCA. Let \PP\ be a wpo.
From \cite[Theorem V.6.9]{Sim09} it follows that \ATR\ proves the
existence of a well-order \QQ\ such that $\RR \preceq \QQ$ for all $\RR
\in \Lin(\PP)$. From this, in \PCA\ we can define $o(\PP) =
\sup(\Lin(\PP))$ as an element of \QQ. In systems below \PCA\
(including \ATR) it is not clear that we can define $o(\PP)$ in this
way. Therefore we need to state Theorem \ref{dJP} without mentioning
$o(\PP)$. Since the theorem states that wpo's have maximal linear
extensions, the following is a natural translation.

\begin{definition}
Within \RCA\ we denote by \MLE\ the following statement: \emph{every
wpo \PP\ has a linear extension \QQ\ such that $\RR \preceq \QQ$ for
all $\RR \in \Lin(\PP)$}.

We refer to such a \QQ\ as \emph{a maximal linear extension of \PP}.
\end{definition}

Following the ideas which led to \MLE, we now formalize Theorem
\ref{Wolk}.

\begin{definition}
Within \RCA, if $\PP = (P,{\leq_P})$ is a partial order, \emph{a chain
in \PP} is a linear order $\CC = (C,{\leq_P})$ where $C \subseteq P$.
We denote by $\Ch(\PP)$ the class of all chains \PP\ (again, this is
just a convenient shorthand).
\end{definition}

\begin{definition}
Within \RCA\ we denote by \MC\ the following statement: \emph{every wpo
\PP\ has a chain \CC\ such that $\CC' \preceq \CC$ for all $\CC' \in
\Ch(\PP)$}.

We refer to such a \CC\ as \emph{a maximal chain in \PP}.
\end{definition}

\section{\ATR\ proves \MLE}\label{sect:forward}

Before starting with the proof, let us mention that the proofs of
Theorem \ref{dJP} in \cite{JP}, \cite{Harz}, and \cite{KT}, when
translated into the language of second order arithmetic, require at
least \SI11 induction, which is not available in \ATR. The proof of
Theorem \ref{dJP} in \cite{Fra00} uses a partial order of sets, and
thus cannot be immediately reproduced in second order arithmetic.

We need some preliminaries, starting with the following important tool
in the study of wpo's. (Our notation for finite sequences follows
\cite[Definition II.2.6]{Sim09}, although we use Greek letters to
denote sequences.)

\begin{definition}
In \RCA\ we define, for a partial order $\PP = (P,{\leq_P})$, the
\emph{tree of bad sequences of \PP}:
\[
\Bad(\PP) = \set{\sigma \in \N^{<\N}}{(\forall i < \lh(\sigma)) (\sigma(i) \in P \land (\forall j<i)\, \sigma(j) \nleq_P \sigma(i))}.
\]
\end{definition}

Notice that \PP\ is a wpo if and only if $\Bad(\PP)$ is \emph{well
founded} (i.e.\ does not have infinite branches). Thus if \PP\ is a wpo
we can define by transfinite recursion the \emph{rank function} on
$\Bad(\PP)$ (taking ordinals as values), which we denote by $\rk_\PP$,
by setting
\[
\rk_\PP (\sigma) = \sup \set{\rk_\PP (\sigma \conc
\langle x \rangle) +1}{\sigma \conc \langle x \rangle \in \Bad(\PP)},
\]
and define the ordinal $\rk(\PP) = \rk_\PP (\es)$ (where $\es$
denotes the sequence of length $0$), so that $\rk_\PP: \Bad(\PP) \to
\rk(\PP)+1$.

Using transfinite recursion we can mimic this definition in \ATR\
(where ordinals are represented by well-orders), thus obtaining a
well-order $\rk(\PP)$ and a function $\rk_\PP: \Bad(\PP) \to
\rk(\PP)+1$.

\begin{definition}
In \RCA\ we define, for a partial order $\PP = (P,{\leq_P})$ and
$\sigma \in \Bad(\PP)$, $P_\sigma = \set{p \in P}{(\forall i <
\lh(\sigma))\, \sigma(i) \nleq_P p}$. We also write $\PP_\sigma =
(P_\sigma, {\leq_P})$.
\end{definition}

Notice that $P_\sigma = \set{p \in P}{\sigma \conc \langle p \rangle
\in \Bad(\PP)}$. Actually, for every sequence $\tau$ we have $\tau \in
\Bad(\PP_\sigma)$ if and only if $\sigma \conc \tau \in \Bad(\PP)$.
From this it follows that $\rk_\PP (\sigma) = \rk (\PP_\sigma)$. Notice
also that $\PP = \PP_\es$.

\begin{lemma}\label{lin}
\ATR\ proves that if \LL\ is a well-order then $\LL \equiv \rk(\LL)$.
\end{lemma}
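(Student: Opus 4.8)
The plan is to show that for a well-order $\LL$, the tree $\Bad(\LL)$ consists essentially of the strictly descending sequences of $\LL$, and that the rank of such a descending-sequence tree is exactly the order type of $\LL$. First I would observe that since a linear order has no nontrivial antichains, a sequence $\sigma \in \N^{<\N}$ lies in $\Bad(\LL)$ if and only if every entry $\sigma(i)$ is in $L$ and for all $j < i$ we have $\sigma(j) \not\leq_L \sigma(i)$, i.e.\ $\sigma(i) <_L \sigma(j)$; in other words $\Bad(\LL)$ is precisely the tree of strictly $<_L$-decreasing finite sequences from $L$. This identification is provable already in $\RCA$.

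Next I would establish, working in $\ATR$, that $\rk(\LL) \equiv \LL$ by analyzing the rank function on this tree. The natural idea is to exhibit an order isomorphism, or rather matching embeddings in both directions using comparability of well-orders (Corollary~\ref{preceqD}). For the embedding $\LL \preceq \rk(\LL)$: map each $x \in L$ to $\rk_\LL(\langle x \rangle)$. If $x <_L y$ then $\langle y \rangle \conc \langle x \rangle \in \Bad(\LL)$, so $\rk_\LL(\langle y \rangle \conc \langle x \rangle) < \rk_\LL(\langle y \rangle)$, and since $\langle x \rangle$ and $\langle y \rangle \conc \langle x \rangle$ determine the same $P_\sigma = \LL_{(<_L x)}$ we get $\rk_\LL(\langle x \rangle) = \rk(\LL_\sigma) = \rk_\LL(\langle y \rangle \conc \langle x \rangle) < \rk_\LL(\langle y \rangle)$, so the map is strictly order preserving. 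More cleanly, I expect to prove by transfinite induction along $\LL$ (available in $\ATR$ for well-orders, hence $\DE11$ transfinite induction applies once we phrase the statement $\DE11$-ly via Corollary~\ref{preceqD}) that $\rk_\LL(\langle x \rangle) = \rk(\LL_{(<_L x)}) \equiv \LL_{(<_L x)} = $ the order type of the predecessors of $x$, and simultaneously that $\rk(\LL) = \sup_x (\rk_\LL(\langle x \rangle) + 1) \equiv \LL$.

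The main obstacle, and the step I would spend the most care on, is carrying out this transfinite induction within $\ATR$ rather than in a setting with full $\SI11$ induction. The statement ``$\rk(\LL_{(<_L x)}) \equiv \LL_{(<_L x)}$'' must be seen to be $\DE11$ (as a property of $x \in L$) so that $\DE11$ transfinite induction — which $\ATR$ provides — can be invoked; here Corollary~\ref{preceqD} is exactly what makes $\equiv$ between well-orders $\DE11$. One subtlety is that $\rk(\LL_\sigma)$ is itself defined via transfinite recursion, which $\ATR$ supports, so the rank function and the auxiliary well-order $\rk(\LL)$ genuinely exist; I would make sure the inductive hypothesis is stated for the actual well-order objects produced by that recursion. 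Once the induction goes through, the equivalence $\LL \equiv \rk(\LL)$ at the top level ($x$ ranging over all of $L$, i.e.\ $\sigma = \es$) follows, completing the proof. I would also note in passing that the reverse inequality $\rk(\LL) \preceq \LL$ can alternatively be obtained by mapping a decreasing sequence to its last entry, but the uniform inductive argument handles both directions at once.
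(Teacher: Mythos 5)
Your proposal is correct and follows essentially the same route as the paper: a $\DE11$ transfinite induction (justified by Corollary~\ref{preceqD}) establishing $\LL_\sigma \equiv \rk_\LL(\sigma)$ at every node, with the conclusion read off at $\sigma = \es$. The only cosmetic difference is that you index the induction along $\LL$ itself rather than on $\rk_\LL(\sigma)$, which amounts to the same thing since for a linear order $\rk_\LL(\sigma)$ depends only on the $<_L$-least entry of $\sigma$.
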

\begin{proof}
By transfinite induction on $\rk_\LL (\sigma)$ for $\sigma \in
\Bad(\LL)$ show that $\LL_\sigma \equiv \rk_\LL (\sigma)$. This formula
is \DE11\ in \ATR\ by Corollary \ref{preceqD}. So we can carry out the
induction in \ATR. All cases of the induction are immediate.
\end{proof}

\begin{lemma}\label{upbnd}
\ATR\ proves that if \PP\ is a wpo and $\LL \in \Lin(\PP)$ then $\LL
\preceq \rk(\PP)$.
\end{lemma}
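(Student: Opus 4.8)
The plan is to prove this by transfinite induction on $\rk_\PP(\sigma)$, showing the more general statement that for every $\sigma \in \Bad(\PP)$ and every $\LL \in \Lin(\PP_\sigma)$ we have $\LL \preceq \rk(\PP_\sigma) = \rk_\PP(\sigma)$. Taking $\sigma = \es$ gives the lemma. As in the proof of Lemma \ref{lin}, the first thing to check is that this statement is of suitable complexity to run the induction in \ATR: since $\PP$ is a wpo, every $\LL \in \Lin(\PP_\sigma)$ is a well-order by Lemma \ref{wqo(ext)}, and $\rk(\PP_\sigma)$ is a well-order, so by Corollary \ref{preceqD} the formula $\LL \preceq \rk(\PP_\sigma)$ is \DE11; quantifying over the (set) parameters $\sigma$ and $\LL$ keeps us within the reach of \DE11\ transfinite induction, which \ATR\ provides.

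The base case $\rk_\PP(\sigma) = 0$ means $P_\sigma = \es$, so $\LL$ is the empty order and $\LL \preceq 0$ trivially. For the successor and limit cases it is cleanest to argue uniformly. Fix $\LL = (P_\sigma, {\leq_L}) \in \Lin(\PP_\sigma)$. If $P_\sigma = \es$ we are done as above; otherwise let $x$ be the $\leq_L$-least element of $P_\sigma$ — here one must be slightly careful, since \RCA\ need not prove that a well-order has a least element, but \ATR\ proves \DE11\ transfinite induction and hence (via the comparability machinery or directly) that nonempty well-orders have least elements, so this is available. Then $x \in P_\sigma$ means $\sigma \conc \langle x \rangle \in \Bad(\PP)$, and the restriction of $\LL$ to $P_\sigma \setminus \{x\} = \{\, p \in P_\sigma \mid x \nleq_P p \,\}$... more precisely, I want to relate the tail of $\LL$ above $x$ to a linear extension of $\PP_{\sigma \conc \langle x \rangle}$. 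Since $\LL$ extends $\PP_\sigma$, every $p$ with $x <_L p$ satisfies $p \neq x$, but not necessarily $x \nleq_P p$; however $P_{\sigma \conc \langle x \rangle} = \{\, p \in P_\sigma \mid x \nleq_P p \,\}$ is a superset... no: it is the set of $p$ with $x \nleq_P p$, which is contained in $\{\, p \in P_\sigma \mid p \neq x \,\}$. The right move is: $\LL$ restricted to $P_{\sigma \conc \langle x \rangle}$ is a linear extension of $\PP_{\sigma \conc \langle x \rangle}$, so by the induction hypothesis it embeds into $\rk_\PP(\sigma \conc \langle x \rangle)$, while the elements of $P_\sigma$ with $x \leq_P p$ form, together with $x$, an initial-segment-free remainder that we handle by noting they all lie $\leq_L$-above...

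Let me restructure the key step. The clean approach: for $\LL \in \Lin(\PP_\sigma)$ nonempty with $\leq_L$-least element $x$, write $\LL = 1 + \LL'$ where $\LL'$ is $\LL$ restricted to $\{\, p \in P_\sigma \mid p \neq x \,\}$. Now $\LL'$ need not extend any single $\PP_{\sigma \conc \langle y\rangle}$, but every element $p \neq x$ of $P_\sigma$ satisfies $x \nleq_P p$ (because $x$ is $\leq_L$-least and $\LL$ extends $\leq_P$, so $x \leq_P p$ would force $x \leq_L p$, consistent — wait, that does not give a contradiction). The actual fact needed, and the one used in this style of argument, is: if $x$ is $\leq_L$-\emph{least} then for $p \neq x$ we cannot have $p \leq_P x$ (else $p \leq_L x$, contradicting minimality and $p \neq x$), but we can have $x \leq_P p$ or incomparability. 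So $\{\, p \in P_\sigma \mid p \neq x \,\} = P_{\sigma \conc \langle x \rangle}$ precisely when no $p$ has $x <_P p$ — which is false in general. I think the correct and standard fix is to not take the least element but to use that $\LL \preceq \sum$ over a decomposition, or better: prove instead by induction that $\rk_\PP(\sigma) \geq \ot(\LL)$ by observing any proper initial segment $\LL \upharpoonright L_{(<_L x)}$ injects into $\bigcup_{y} P_{\sigma \conc \langle y \rangle}$ indexed appropriately — honestly the cleanest is: show $\LL \upharpoonright \{\, p \mid p <_L x \,\} \in \Lin(\PP_{\sigma \conc \langle x \rangle})$ fails, so instead observe that for the $\leq_L$-least element $x_0$, the order $\LL$ restricted to $P_\sigma \cap \{p : x_0 \nleq_P p\}$ equals $\Lin(\PP_{\sigma\conc\langle x_0\rangle})$-restricted and has rank $< \rk_\PP(\sigma)$, while the $x_0$-cone $\{p : x_0 \leq_P p\}$ is handled because it is $\leq_L$-above nothing special...

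The hard part, clearly, will be getting this decomposition step exactly right: expressing an arbitrary linear extension $\LL$ of $\PP_\sigma$ as (something of rank $<\rk_\PP(\sigma)$) built from the $\LL$-restricted-to-$\PP_{\sigma\conc\langle y\rangle}$ pieces, in a way that yields an embedding $\LL \preceq \rk_\PP(\sigma)$, and doing so with only the transfinite-recursion and \DE11-induction tools available in \ATR\ rather than the $\Sigma^1_1$ induction that the classical proofs use. I expect the right statement to prove by induction is: \emph{for all $\sigma \in \Bad(\PP)$ and all $\LL \in \Lin(\PP_\sigma)$, $\LL \preceq \rk_\PP(\sigma)$}, with the inductive step taking the $\leq_L$-least $x$ of $\LL$, setting $\tau = \sigma \conc \langle x \rangle$, decomposing $P_\sigma \setminus \{x\}$ into $P_\tau$ (elements with $x \nleq_P p$) and its complement within $P_\sigma\setminus\{x\}$ (elements with $x <_P p$), noting the latter set is $\leq_L$-above $x$ and using that $\LL$ restricted to it is itself a linear extension of a wpo with strictly smaller rank (as it omits $x$ and lies inside $P_\tau$ after re-rooting), then assembling via the sum operation $\rk_\PP(\sigma) = \sup_y (\rk_\PP(\sigma\conc\langle y\rangle)+1)$ and the fact (provable in \RCA) that $\ot(1 + \alpha) \leq \beta$ whenever $\alpha < \beta$ and $\beta$ is additively closed at that point — details I will not grind through here, but which reduce to elementary manipulations of well-orders of the kind \RCA\ handles, combined with the \DE11\ comparability from Corollary \ref{preceqD}.
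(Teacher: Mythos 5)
There is a genuine gap here, and the route you chose is much harder than the one the paper takes. Your inductive step never closes: after taking the $\leq_L$-least element $x$ of $\LL \in \Lin(\PP_\sigma)$ and setting $\tau = \sigma \conc \langle x \rangle$, you correctly notice that $P_\sigma \setminus \{x\}$ is \emph{not} $P_\tau$ --- it also contains the upper cone $C = \set{p \in P_\sigma}{x <_P p}$ --- but you never supply a working way to absorb $C$. The order $\LL \restriction (P_\sigma \setminus \{x\})$ is an arbitrary shuffle of $\LL \restriction P_\tau$ and $\LL \restriction C$, and a shuffle of two well-orders can be as long as their natural sum (Lemma \ref{ATRshuffle}), so even with bounds on both pieces there is no reason the result embeds into $\rk_\PP(\sigma) = \sup\set{\rk_\PP(\sigma \conc \langle y \rangle)+1}{y \in P_\sigma}$; the ``elementary manipulations'' you defer are exactly where the real work would have to happen, and it is not clear it can be done. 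There is also a metamathematical obstruction: because the inductive step applies the hypothesis to a \emph{different} linear extension (a restriction of $\LL$), the statement you must induct on is universally quantified over $\LL$, hence \PI11\ rather than \DE11\ with $\LL$ as a parameter, and \ATR\ only provides \DE11\ transfinite induction --- this is precisely the trap the paper warns about when it says the classical proofs of Theorem \ref{dJP} need \SI11\ induction.

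The paper's proof avoids all of this with a single observation: since $x \leq_P y$ implies $x \leq_L y$, contrapositively $\sigma(j) \nleq_L \sigma(i)$ implies $\sigma(j) \nleq_P \sigma(i)$, so $\Bad(\LL)$ is a subtree of $\Bad(\PP)$ and therefore $\rk(\LL) \preceq \rk(\PP)$. Combined with Lemma \ref{lin}, which gives $\LL \equiv \rk(\LL)$ (and which is where the one genuinely needed \DE11\ transfinite induction lives), this yields $\LL \preceq \rk(\PP)$ with no decomposition of $\LL$ at all. The moral is that the upper bound is a rank-monotonicity statement about trees, not something to prove by dissecting the linear extension.
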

\begin{proof}
By Lemma \ref{wqo(ext)} \LL\ is a well-order. Notice that $\Bad(\LL)$
is a subtree of $\Bad(\PP)$ and obviously $\rk_\LL (\sigma) \preceq
\rk_\PP (\sigma)$ for every $\sigma \in \Bad(\LL)$, so that $\rk(\LL)
\preceq \rk(\PP)$. Therefore, using the previous Lemma, $\LL \preceq
\rk(\PP)$.
\end{proof}

Notice that the above result implies that if \PP\ is a computable wpo
then $o(\PP)$ is a computable ordinal, as it is at most $\rk(\PP)$ and
$\Bad(\PP)$ is a computable tree. This formally answers a question of
\cite{Schmidt}, but a real answer and much more information is provided
by Montalb\'{a}n in Theorem \ref{Montalban}.

Lemma \ref{upbnd} suggests our strategy for proving \MLE\ within \ATR:
define, for each wpo \PP, an $\LL \in \Lin(\PP)$ such that $\rk(\PP)
\preceq \LL$ (so that actually $\LL \equiv \rk(\PP)$).

Our last preliminary result (Lemma \ref{ATRshuffle} below) shows that
\ATR\ proves a special case of \MLE, and indeed computes the maximal
order type of a linear extension of the disjoint union of two
well-orders. (In Lemma \ref{shuffle} we will obtain a much weaker
result in \ACA.)

Before stating the Lemma, we need to adapt the definition of natural
(also called Hessenberg, or commutative) sum of ordinals to
well-orders. By Theorem \ref{CNF} \ATR\ proves that every well-order
has a Cantor Normal Form: this is what is needed for the definition of
natural sum.

\begin{definition}
In \ATR, suppose $\II \equiv \sum_{i \leq m} \om^{\KK_i}$ and $\JJ
\equiv \sum_{j \leq n} \om^{\LL_j}$ are well-orders with $\KK_{i+1}
\preceq \KK_i$ and $\LL_{j+1} \preceq \LL_j$ for $i<m$ and $j<n$. Order
the set $\set{\KK_i}{i \leq m} \cup \set{\LL_j}{j \leq n}$ as
$\set{\MM_k}{k \leq m+n}$ so that $\MM_{k+1} \preceq \MM_k$ for
$k<m+n$. Then we let $\II \nsum \JJ$ be $\sum_{k \leq m+n}
\om^{\MM_k}$.
\end{definition}

The precise definition of the well-order $\II \nsum \JJ$ obviously
depends on the well-orders $\KK_i$ and $\LL_j$ used in the Cantor
Normal Forms of \II\ and \JJ. It is therefore to be considered as a
definition \lq\lq up to equivalence\rq\rq. Notice that $\nsum$ is
obviously commutative. The following Lemma states another basic
property of the natural sum.

\begin{lemma}\label{lemma:nsum}
\ATR\ proves that if \II\ and \JJ\ are well-orders and $\II \prec \II'$
then $\II \nsum \JJ \prec \II' \nsum \JJ$.
\end{lemma}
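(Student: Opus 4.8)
The plan is to reduce the statement to a calculation with Cantor normal forms, using the comparability machinery of Corollary \ref{preceqD} and Lemma \ref{lin} to keep everything $\DE11$ so that the relevant transfinite induction is available in \ATR. First I would invoke Theorem \ref{CNF} to fix Cantor normal forms $\II \equiv \sum_{i \leq m} \om^{\KK_i}$ and $\JJ \equiv \sum_{j \leq n} \om^{\LL_j}$ with nonincreasing exponents, and likewise for $\II'$. The natural sums $\II \nsum \JJ$ and $\II' \nsum \JJ$ are then, by definition, merges of the respective exponent multisets into nonincreasing sums of $\om$-powers; what must be shown is that $\II \prec \II'$ forces $\II \nsum \JJ \prec \II' \nsum \JJ$ at the level of these normal forms.

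The key step is a comparison lemma for Cantor normal forms: in \ATR, if $\sum_{k} \om^{\MM_k}$ and $\sum_{k} \om^{\MM'_k}$ are two well-orders written with nonincreasing exponents, then one is $\prec$ the other exactly when the sequences of exponents $(\MM_k)$ and $(\MM'_k)$ differ, at the first place they differ (reading lexicographically, padding the shorter with a "zero" exponent at the end in the sense of a strictly decreasing tail), in favor of the larger one; and $\equiv$ holds iff the exponent sequences agree up to $\equiv$. This is essentially the usual ordinal arithmetic fact, and the point is only that it is provable in \ATR: each clause is a Boolean combination of statements $\MM_k \preceq \MM'_\ell$ between well-orders, which are $\DE11$ by Corollary \ref{preceqD}, so the induction on the length of the normal forms needed to verify it is a $\DE11$ induction, available in \ATR. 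One should also record, for use here and later, that such a comparison is transitive and respects $\nsum$ coordinatewise, all again by finite bookkeeping over $\DE11$ formulas.

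Granting the comparison lemma, the argument finishes quickly. From $\II \prec \II'$ we get, comparing the exponent sequences $(\KK_i)_{i\le m}$ and $(\KK'_{i'})_{i'\le m'}$, either a first position where $(\KK'_{i'})$ has a strictly larger exponent than $(\KK_i)$, or $(\KK_i)$ is a proper initial segment of $(\KK'_{i'})$ (up to $\equiv$) with a nonempty remaining tail. Now merge in the fixed multiset $(\LL_j)_{j\le n}$ on both sides and sort into nonincreasing order. Because the $\LL_j$'s are identical on the two sides, the merged-and-sorted sequences agree wherever the $\LL$'s are interleaved in the same way, and the first discrepancy between the two merged sequences is still governed by the first discrepancy between $(\KK_i)$ and $(\KK'_{i'})$: inserting equal elements into two sequences that are lexicographically ordered preserves the strict inequality of the merges. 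Hence the exponent sequence of $\II' \nsum \JJ$ beats that of $\II \nsum \JJ$ at the first place they differ, and by the comparison lemma $\II \nsum \JJ \prec \II' \nsum \JJ$, as required.

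The main obstacle I expect is purely bookkeeping, but genuinely delicate: making the "insert equal elements into a lexicographically ordered pair of finite sequences" step precise while the sequences are sequences of \emph{well-orders} compared up to $\equiv$ rather than honest ordinals. One has to be careful that "first place they differ" is well-defined — i.e. that two exponents are either $\equiv$ or $\prec$-comparable, which is exactly where comparability of well-orders (\ATR, Theorem \ref{comp}) is used — and that the merge is computed on both sides by the \emph{same} interleaving pattern, so that no spurious cancellation occurs; one also has to handle the degenerate subcase in which $\II$'s exponent sequence is an initial segment of $\II'$'s, where the relevant comparison at the "first differing position" is between $\LL_j$-type exponents on the $\II$ side and $\KK'_{i'}$-type exponents on the $\II'$ side. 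None of this requires induction beyond $\DE11$, so the whole argument stays inside \ATR.
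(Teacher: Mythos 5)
Your proof is correct; note, though, that the paper offers no proof of Lemma \ref{lemma:nsum} at all --- it is stated as a ``basic property of the natural sum'' and left to the reader --- so there is nothing to compare against, and your argument via lexicographic comparison of Cantor normal forms is exactly the natural way to fill the omission. The two ingredients you isolate are the right ones: the comparison lemma for normal forms is provable in \ATR\ from indecomposability (Theorem \ref{indec}), Theorem \ref{Hirst} and comparability (Theorem \ref{comp}), with only a finite induction over \DE11\ predicates needed. For the merge step, the bookkeeping you flag as delicate can be organized so that no analysis of interleaving patterns is required: let $c$ be the exponent of $\II'$ at the first position where the two exponent sequences differ (in your first case $c=\KK'_p \succ \KK_p$; in the prefix case $c$ is the first exponent of $\II'$ beyond the common part). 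Since the sequences are nonincreasing, the exponents of $\II$ that are $\succeq c$ form (up to $\equiv$) exactly the common prefix, while those of $\II'$ that are $\succeq c$ are that same multiset together with at least one extra copy of $c$. Merging in the exponents of $\JJ$ that are $\succeq c$ and sorting therefore yields, for $\II'\nsum\JJ$, the corresponding initial segment of the exponent sequence of $\II\nsum\JJ$ followed by at least one additional $c$, whereas $\II\nsum\JJ$ continues (if at all) with an exponent $\prec c$; this exhibits the first discrepancy in favour of $\II'\nsum\JJ$ and the comparison lemma finishes the proof. Everything involved is a Boolean combination of \DE11\ statements by Corollary \ref{preceqD}, so the argument indeed lives inside \ATR.
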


\begin{lemma}\label{ATRshuffle}
\ATR\ proves that if \II\ and \JJ\ are well-orders there exists $\QQ
\equiv \II \nsum \JJ$ which is a maximal shuffle of \II\ and \JJ\
(i.e.\ $\QQ$ is a maximal linear extension of $\II \oplus \JJ$).
\end{lemma}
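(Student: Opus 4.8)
The plan is to prove the two parts of the statement separately: first, that any shuffle $\QQ$ of $\II$ and $\JJ$ satisfies $\QQ \preceq \II \nsum \JJ$ (so $\II \nsum \JJ$ is an upper bound for the order types of shuffles), and second, that this upper bound is attained, i.e.\ there is a shuffle $\QQ$ with $\QQ \equiv \II \nsum \JJ$. Granting both, $\QQ$ is a maximal linear extension of $\II \oplus \JJ$ by definition.

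For the upper bound, I would first invoke Lemma~\ref{upbnd} to get $\QQ \preceq \rk(\II \oplus \JJ)$ for every $\QQ \in \Lin(\II \oplus \JJ)$, so it suffices to show $\rk(\II \oplus \JJ) \preceq \II \nsum \JJ$. I would analyze $\Bad(\II \oplus \JJ)$: a bad sequence for $\II \oplus \JJ$ splits into a bad subsequence landing in $\II$ and one landing in $\JJ$, each of which is a strictly descending sequence in the respective well-order (since incomparability in $\II \oplus \JJ$ between two elements of $\II$ means being $<_\II$-incomparable, impossible in a linear order --- so "bad" forces strict descent). Running the rank computation, one sees $\rk(\II \oplus \JJ) \preceq \rk(\II) \nsum \rk(\JJ)$, and then by Lemma~\ref{lin} (applied to the well-orders $\II$ and $\JJ$) together with the monotonicity of $\nsum$ from Lemma~\ref{lemma:nsum}, $\rk(\II) \nsum \rk(\JJ) \equiv \II \nsum \JJ$. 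All the comparisons involved are $\DE11$ in \ATR\ by Corollary~\ref{preceqD}, so the transfinite inductions needed to justify the rank inequality can be carried out in \ATR.

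For attainment, I would construct an explicit shuffle $\QQ$ realizing the natural sum. Using the Cantor normal forms $\II \equiv \sum_{i \leq m} \om^{\KK_i}$ and $\JJ \equiv \sum_{j \leq n} \om^{\LL_j}$ provided by Theorem~\ref{CNF}, think of $\II$ as a concatenation of "blocks" of order type $\om^{\KK_i}$ and similarly for $\JJ$. Interleave the blocks of $\II$ and $\JJ$ in the order dictated by the nonincreasing rearrangement $\MM_0 \succeq \MM_1 \succeq \cdots \succeq \MM_{m+n}$ used in the definition of $\II \nsum \JJ$ (breaking ties between a $\KK_i$-block and an $\LL_j$-block by, say, putting the $\II$-block first); within each block keep the original order. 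This defines a linear order on $L_\II \cup L_\JJ$; one checks it extends $\leq_{\II \oplus \JJ}$ (trivially, since there are no cross relations to respect) and that its order type is exactly $\sum_{k \leq m+n} \om^{\MM_k} \equiv \II \nsum \JJ$. Both the construction and the verification are arithmetic in the Cantor normal form data, hence available in \ATR.

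I expect the main obstacle to be the bookkeeping in the upper-bound direction: carefully tracking how a bad sequence in $\II \oplus \JJ$ decomposes and showing the induced rank inequality $\rk(\II \oplus \JJ) \preceq \rk(\II) \nsum \rk(\JJ)$ holds in the form of a provable-in-\ATR\ transfinite induction, rather than merely in an external classical sense. One must be careful that the natural sum is only defined up to equivalence, so the induction hypothesis has to be phrased in terms of $\preceq$ / $\equiv$ throughout; Corollary~\ref{preceqD} is what makes these formulas $\DE11$ and hence legitimate targets for transfinite induction in \ATR.
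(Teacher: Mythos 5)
Your proposal is correct and follows essentially the same route as the paper: the explicit shuffle is built by interleaving the Cantor normal form blocks, and maximality is reduced via Lemma~\ref{upbnd} to showing $\rk(\II\oplus\JJ)\preceq\II\nsum\JJ$ by a $\DE11$ transfinite induction on the rank of bad sequences, using that each one-step extension strictly decreases one component and hence (by Lemma~\ref{lemma:nsum}) the natural sum. The only cosmetic difference is that the paper runs the induction directly on the restricted orders $\II_\sigma\nsum\JJ_\sigma$ rather than on $\rk(\II_\sigma)\nsum\rk(\JJ_\sigma)$ with an appeal to Lemma~\ref{lin}, which amounts to the same thing.
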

\begin{proof}
Let $\PP = \II \oplus \JJ$. It is easy to define $\QQ \in \Lin(\PP)$
with $\QQ \equiv \II \nsum \JJ$: using the notation of the previous
definition, elements of $I \cup J$ are identified in the obvious way
with elements of $\sum_{k \leq m+n} \om^{\MM_k}$.

To prove that \QQ\ is a maximal linear extension of \PP, by Lemma
\ref{upbnd}, it suffices to show that $\rk(\PP) \preceq \II \nsum \JJ$.

For $\sigma \in \Bad(\PP)$ we let $I_\sigma = P_\sigma \cap I$ and
$J_\sigma = P_\sigma \cap J$ and denote by $\II_\sigma$ and
$\JJ_\sigma$ the corresponding linear orders. We use transfinite
induction on $\rk_\PP (\sigma)$ to prove that $\rk_\PP (\sigma) \preceq
\II_\sigma \nsum \JJ_\sigma$ for every $\sigma \in \Bad(\PP)$ (this is
again a \DE11 transfinite induction in \ATR). Fix $\sigma \in
\Bad(\PP)$. For every $p \in I_\sigma$ and $q \in I_{\sigma \conc
\langle p \rangle}$ we have $q <_\II p$, and thus $\II_{\sigma \conc
\langle p \rangle} \prec \II_\sigma$. In this case we also have
$\JJ_{\sigma \conc \langle p \rangle} = \JJ_\sigma$. When $p \in
J_\sigma$ the situation is symmetric. Thus, for every $p \in P_\sigma$,
either $\II_{\sigma \conc \langle p \rangle} \prec \II_\sigma$ and
$\JJ_{\sigma \conc \langle p \rangle} = \JJ_\sigma$, or $\JJ_{\sigma
\conc \langle p \rangle} \prec \JJ_\sigma$ and $\II_{\sigma \conc
\langle p \rangle} = \II_\sigma$. In both cases, by Lemma
\ref{lemma:nsum}, we have $\II_{\sigma \conc \langle p \rangle} \nsum
\JJ_{\sigma \conc \langle p \rangle} \prec \II_\sigma \nsum
\JJ_\sigma$, i.e.\ $(\II_{\sigma \conc \langle p \rangle} \nsum
\JJ_{\sigma \conc \langle p \rangle}) +1 \preceq \II_\sigma \nsum
\JJ_\sigma$. Thus, using the induction hypothesis,
\begin{align*}
\rk_\PP (\sigma) & = \sup \set{\rk_\PP (\sigma \conc \langle p \rangle) +1}{p \in P_\sigma}\\
& \preceq \sup \set{(\II_{\sigma \conc \langle p \rangle} \nsum \JJ_{\sigma \conc \langle p \rangle}) +1}{p \in P_\sigma}\\
& \preceq \II_\sigma \nsum \JJ_\sigma.
\end{align*}
When $\sigma = \es$ we have $\rk_\PP (\es) \preceq \II \nsum \JJ$ and
thus $\rk(\PP) \preceq \II \nsum \JJ$.
\end{proof}

We can now prove the main result of this section.

\begin{theorem}\label{ATR->MLE}
\ATR\ proves \MLE.
\end{theorem}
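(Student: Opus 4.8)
The plan is to follow the strategy announced right after Lemma \ref{upbnd}: for an arbitrary wpo \PP, construct a linear extension $\QQ \in \Lin(\PP)$ with $\rk(\PP) \preceq \QQ$, so that by Lemma \ref{upbnd} we get $\QQ \equiv \rk(\PP)$ and $\QQ$ is a maximal linear extension. The natural candidate is to build $\QQ$ by transfinite recursion along $\rk(\PP)$, assigning to each $p \in P$ an ordinal (i.e.\ an element of some fixed well-order) in a way that reflects the rank structure of $\Bad(\PP)$, and to verify $\rk(\PP) \preceq \QQ$ by a $\Delta^1_1$ transfinite induction mimicking the argument in Lemma \ref{ATRshuffle}.

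Concretely, I would work in \ATR\ and proceed as follows. First, since \PP\ is a wpo, $\Bad(\PP)$ is well founded and \ATR\ supplies the well-order $\rk(\PP)$ together with the rank function $\rk_\PP \colon \Bad(\PP) \to \rk(\PP)+1$; recall $\rk_\PP(\sigma) = \rk(\PP_\sigma)$. The key recursive idea is to order $P$ "by rank, then recursively": for $p, q \in P$ with $p \neq q$, compare first $\rk(\PP_{\langle p\rangle})$ with $\rk(\PP_{\langle q\rangle})$ — more precisely one wants, along a bad sequence $\sigma$, to place $p$ before $q$ in $\QQ$ when appending $p$ first and then $q$ (if still bad) decreases rank less than the other order, unwinding to a genuine linear extension. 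The cleanest implementation is a transfinite recursion on $\rk(\PP)$ producing, for each $\sigma \in \Bad(\PP)$, a linear order $\QQ_\sigma$ on $P_\sigma$ that is a linear extension of $\PP_\sigma$ and satisfies $\QQ_\sigma \equiv \rk(\PP_\sigma)$, defined so that $\QQ_\sigma$ restricted appropriately agrees with the $\QQ_{\sigma\conc\langle p\rangle}$; then $\QQ = \QQ_\es$. Setting this up requires the arithmetical transfinite recursion available in \ATR, and care that the object produced is a single set (a linear order on all of $P$), not a proper class of approximations.

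For the verification that $\rk(\PP) \preceq \QQ$, I would imitate the induction in Lemma \ref{ATRshuffle}: prove by transfinite induction on $\rk_\PP(\sigma)$ that $\rk_\PP(\sigma) \preceq \QQ_\sigma$ for every $\sigma \in \Bad(\PP)$. The point is that for each $p \in P_\sigma$ one has $\rk_\PP(\sigma\conc\langle p\rangle) \preceq \QQ_{\sigma\conc\langle p\rangle}$ by induction hypothesis, and by construction $\QQ_{\sigma\conc\langle p\rangle}$ embeds as a proper initial segment (or at least with $\QQ_{\sigma\conc\langle p\rangle}+1 \preceq \QQ_\sigma$) of $\QQ_\sigma$ — essentially because $p$ itself sits in $\QQ_\sigma$ strictly above everything in $P_{\sigma\conc\langle p\rangle}$ that needs to be separated, and $\QQ_\sigma$ was built to respect ranks. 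Taking suprema as in the displayed computation of Lemma \ref{ATRshuffle} gives $\rk_\PP(\sigma) = \sup_p (\rk_\PP(\sigma\conc\langle p\rangle)+1) \preceq \QQ_\sigma$, and at $\sigma=\es$ this is $\rk(\PP) \preceq \QQ$. All the comparability statements appearing are $\Delta^1_1$ in \ATR\ by Corollary \ref{preceqD} (the relevant orders being well-orders), so the transfinite induction is legitimate.

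The main obstacle I expect is the construction of $\QQ$ itself — making the recursive "order by rank, then recurse" idea into an actual arithmetical transfinite recursion that yields a genuine linear extension of \PP\ on the full domain $P$, and simultaneously arranging that the local embeddings $\QQ_{\sigma\conc\langle p\rangle}+1 \preceq \QQ_\sigma$ hold so the induction goes through. One has to be careful that the definition does not secretly require knowing $\rk(\PP_\sigma)$ before $\QQ_\sigma$ is defined in a circular way, and that incomparable elements of \PP\ get ordered consistently (transitivity of $\leq_\QQ$). I would handle this by fixing once and for all the well-order $\rk(\PP)$ first, then defining $\QQ$ in a single arithmetical-in-$\rk(\PP)$-and-$\PP$ pass — comparing two elements $p,q$ by looking at the finite data of how their ranks in the relevant $\PP_\sigma$'s compare — which avoids nested recursions; verifying linearity, extension of $\leq_P$, and the initial-segment property are then the routine (if slightly fiddly) checks. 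No induction beyond $\Delta^1_1$ transfinite induction is needed, which is exactly what \ATR\ provides, so the proof stays within \ATR\ as required.
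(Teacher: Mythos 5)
Your overall strategy is the one the paper itself announces right after Lemma \ref{upbnd} (build $\LL\in\Lin(\PP)$ with $\rk(\PP)\preceq\LL$ by transfinite recursion on rank and verify by \DE11 transfinite induction), and your successor-stage idea and the final ``take suprema'' step are fine. But there is a genuine gap at limit ranks, and that is where the entire difficulty of the theorem lives. Your verification rests on the claim that, ``by construction,'' $\QQ_{\sigma\conc\langle p\rangle}+1\preceq\QQ_\sigma$ for every $p\in P_\sigma$. For a $\sigma$ of limit rank you need this simultaneously for a family of $p$'s whose ranks $\rk_\PP(\sigma\conc\langle p\rangle)$ are cofinal in $\rk_\PP(\sigma)$; the sets $P_{\sigma\conc\langle p\rangle}$ for different such $p$ are not nested, the recursively built orders on them are mutually incompatible, and there is no way to amalgamate them all into one linear extension of $\PP_\sigma$ that contains each $\QQ_{\sigma\conc\langle p\rangle}$ as a suborder. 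Your fallback description (``compare $p,q$ by the finite data of how their ranks compare'') is not a construction and does not produce a transitive relation extending $\leq_P$, let alone one with the required embeddings.

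The paper's proof overcomes exactly this obstacle with machinery that is absent from your sketch. First, it uses the wpo property beyond mere well-foundedness of $\Bad(\PP)$: via Lemma \ref{wqo(set)} it extracts a $\leq_P$-\emph{increasing} sequence $(x_i)$ in $P_\sigma$ whose ranks are cofinal in $\rk_\PP(\sigma)$, so that the sets $P_{\sigma\conc\langle x_i\rangle}$ become nested with union $P_\sigma$, and it defines $\LL_\sigma=\sum_i\bigl(\LL_{\sigma\conc\langle x_i\rangle}\restriction Q_i\bigr)$ on the difference sets $Q_i$. Second, even then $\LL_{\sigma\conc\langle x_i\rangle}$ does \emph{not} embed into $\LL_\sigma$ in any obvious way (only its restriction to the new block $Q_i$ is used), so the lower bound is proved instead by choosing the $x_i$ according to the Cantor normal form of $\rk_\PP(\sigma)$ (Theorem \ref{CNF}) and showing each block $\QQ_{i+1}$ has order type at least $\om^{\beta_{n_i}}$; that step needs Lemma \ref{ATRshuffle} (the natural-sum bound for shuffles of two well-orders), since $\LL_{\sigma\conc\langle x_{i+1}\rangle}$ is a shuffle of its restrictions to $P_{\sigma\conc\langle x_i\rangle}$ and to $Q_{i+1}$. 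Without the increasing subsequence, the normal-form decomposition, and the shuffle lemma, the limit case does not go through, so the proposal as written does not constitute a proof.
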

\begin{proof}
Let $\PP = (P,{\leq_P})$ be a wpo. Using arithmetical transfinite
recursion on rank we will define, for each $\sigma \in \Bad(\PP)$, a
linear order $\LL_\sigma$. We will then prove by \DE11 transfinite
induction on rank that $\LL_\sigma \in \Lin(\PP_\sigma)$ and $\rk_\PP
(\sigma) \preceq \LL_\sigma$. Since $\PP_\es = \PP$, we have $\LL_\es
\in \Lin(\PP)$ and $\rk(\PP) \preceq \LL_\es$. By Lemma \ref{upbnd},
$\LL_\es$ is a maximal linear extension of \PP\ and the proof is
complete.\smallskip

To define the $\LL_\sigma$'s we need some preliminaries. Let
\begin{align*}
S & = \set{\sigma \in \Bad(\PP)}{\rk_\PP (\sigma) \text{ is a successor}} \quad \text{and}\\
L & = \set{\sigma \in \Bad(\PP)}{\rk_\PP (\sigma) \text{ is a limit}}.
\end{align*}
In \ATR\ we can define a function $p: S \to P$ such that $p(\sigma) \in
P_\sigma$ and $\rk_\PP(\sigma) = \rk_\PP(\sigma \conc \langle p(\sigma)
\rangle) +1$ for every $\sigma \in S$. We also need, for every $\sigma
\in L$, a sequence $\langle x_i \rangle$ of elements of $P_\sigma$ such
that $\rk_\PP(\sigma) = \sup \set{\rk_\PP(\sigma \conc \langle x_i
\rangle)}{i \in \N}$. However we want $\langle x_i \rangle$ to enjoy
further properties, so we are going to describe its construction in
detail.

Fix $\sigma \in L$ and suppose $\rk_\PP(\sigma) = \lambda = \sum_{k
\leq m} \om^{\alpha_k}$ with $\alpha_k \geq \alpha_{k+1}>0$ for every
$k<m$. Let $\gamma = \sum_{k<m} \om^{\alpha_k}$ and look at $\alpha_m$.
If $\alpha_m$ is a successor $\beta+1$ let $\beta_n = \beta$ for every
$n$. If $\alpha_m$ is a limit, we can compute (from the realization of
$\alpha_m$ as a concrete well-order) a sequence $(\beta_n)$ such that
$\beta_n<\beta_{n+1}$ and $\alpha_m = \sup \set{\beta_n}{n \in \N}$. In
both cases let $\lambda_n = \gamma + \sum_{j<n} \om^{\beta_j}$, so that
$\lambda = \sup \set{\lambda_n}{n \in \N}$. Notice also that $\lambda =
\gamma + \sum_{i \in \N} \om^{\beta_{n_i}}$ for any infinite increasing
sequence $(n_i)$. We can define by recursion infinite sequences $(x_i)$
and $(n_i)$ such that for all $i$
\begin{enumerate}
  \item $x_i \in P_\sigma$,
  \item $n_i<n_{i+1}$,
  \item $\lambda_{n_i} \leq \rk_\PP (\sigma \conc \langle x_i
      \rangle)< \lambda_{n_i+1}$.
\end{enumerate}
Lemma \ref{wqo(set)} implies that we can refine the sequence $\langle
x_i \rangle$ so that for all $i$ we also have
\begin{enumerate}\setcounter{enumi}{3}
  \item $x_i \leq_P x_{i+1}$.
\end{enumerate}
Notice that in fact $x_i \neq x_{i+1}$ and hence $x_i <_P x_{i+1}$
and $P_{\sigma \conc \langle x_i \rangle} \subsetneq P_{\sigma \conc
\langle x_{i+1} \rangle}$ hold.

In the preceding paragraph we showed that for every $\sigma \in L$
there exist the well-orders $\alpha_k$'s representing $\rk_\PP(\sigma)$
in Cantor normal form, the sequence $(\beta_n)$ obtained from
$\alpha_m$ (which we use to define the $\lambda_n$'s), and sequences
$(x_i)$ and $(n_i)$ satisfying conditions (1)--(4) above. Using \SAC,
which is provable in \ATR, we can associate to every $\sigma \in L$
objects satisfying these conditions, which will be used in the
definition of $\LL_\sigma$.

Before going on, we notice some further properties of the $x_i$'s.
First, we have $\gamma = \lambda_0 \leq \rk_\PP (\sigma \conc \langle
x_0 \rangle)$.

We claim also that $P_\sigma = \bigcup_{i \in \N} P_{\sigma \conc
\langle x_i \rangle}$. In fact if $y \in P_\sigma$ is such that $y
\notin P_{\sigma \conc \langle x_i \rangle}$ for all $i$, we have
$x_i <_P y$ for all $i$ (if $y = x_i$ then $y \in P_{\sigma \conc
\langle x_{i+1} \rangle}$, as $x_i <_P x_{i+1}$). Then $\sigma \conc
\langle y, x_i \rangle \in \Bad(\PP)$ and $\rk_\PP(\sigma \conc
\langle y, x_i \rangle) = \rk_\PP(\sigma \conc \langle x_i \rangle)$
for every $i$ (since $P_{\sigma \conc \langle y, x_i \rangle} =
P_{\sigma \conc \langle x_i \rangle}$). Therefore $\rk_\PP(\sigma
\conc \langle y \rangle) \geq \sup \rk_\PP(\sigma \conc \langle x_i
\rangle) = \rk_\PP(\sigma)$, which is impossible.

We also let $Q_0 = P_{\sigma \conc \langle x_0 \rangle}$ and $Q_{i+1}
= P_{\sigma \conc \langle x_{i+1} \rangle} \setminus P_{\sigma \conc
\langle x_i \rangle}$. Notice that $P_\sigma = \bigcup_{i \in \N}
Q_i$ follows from $P_\sigma = \bigcup_{i \in \N} P_{\sigma \conc
\langle x_i \rangle}$.\medskip

We can now define by transfinite recursion the function $\sigma \mapsto
\LL_\sigma$. When $\rk_\PP (\sigma) =0$ we let $\LL_\sigma$ be the
empty well-order. When $\sigma \in S$ let $\LL_\sigma = \LL_{\sigma
\conc \langle p(\sigma) \rangle} + \{p(\sigma)\}$. If $\sigma \in L$ we
let
\[
\LL_\sigma = \sum_{i \in \N}
\left( \LL_{\sigma \conc \langle x_i \rangle} \restriction Q_i \right).
\]
Here, of course, we are using the $x_i$'s (and hence the resulting
$Q_i$'s) fixed in correspondence with $\sigma$ before the recursion
started.\medskip

Now we prove by \DE11 transfinite induction on rank that $\LL_\sigma
\in \Lin(\PP_\sigma)$ and that $\rk_\PP (\sigma) \preceq \LL_\sigma$
for all $\sigma \in \Bad(\PP)$.

When $\rk_\PP (\sigma) =0$ we have $P_\sigma = \es$ and the proof is
immediate.\smallskip

When $\sigma \in S$ let $\tau = \sigma \conc \langle p(\sigma) \rangle$
and recall that $\rk_\PP (\sigma) = \rk_\PP (\tau) +1$. First notice
that $P_\sigma = P_\tau \cup \{p(\sigma)\}$. In fact, one inclusion is
obvious. For the other, observe that if $p' \in P_\sigma \setminus
(P_\tau \cup \{p(\sigma)\})$ then $p(\sigma) <_P p'$ and $\tau' =
\sigma \conc \langle p', p(\sigma) \rangle \in \Bad(\PP)$. Moreover
$P_{\tau'} = P_\tau$ and $\rk_\PP (\tau') = \rk_\PP (\tau)$, which is
impossible because $\rk_\PP (\sigma) \geq \rk_\PP (\tau') +2$. By the
induction hypothesis $\LL_\tau \in \Lin(\PP_\tau)$ and $\rk_\PP(\tau)
\preceq \LL_\tau$. It is clear that $\LL_\sigma = \LL_\tau +
\{p(\sigma)\}$ is a linear extension of $\PP_\sigma$ (if $q \in P_\tau$
then $p(\sigma) \leq_P q$ is impossible) and that $\rk_\PP (\sigma)
\preceq \LL_\sigma$.\smallskip

When $\sigma \in L$ let $\gamma$, $(\beta_n)$, $(\lambda_n)$, $(x_i)$,
$(n_i)$, and $(Q_i)$ be the objects fixed in correspondence with
$\sigma$. To simplify the notation we write $\QQ_i$ in place of
$\LL_{\sigma \conc \langle x_i \rangle} \restriction Q_i$. Notice that
since $\QQ_0 = \LL_{\sigma \conc \langle x_0 \rangle}$ the induction
hypothesis implies $\gamma \preceq \QQ_0$.

We now claim that $\om^{\beta_{n_i}} \preceq \QQ_{i+1}$. If this is not
the case then we have $\QQ_{i+1} \prec \om^{\beta_{n_i}}$. Notice that,
by Lemma \ref{upbnd}, $\LL_{\sigma \conc \langle x_{i+1} \rangle}
\restriction P_{\sigma \conc \langle x_i \rangle} \preceq \rk_\PP
(\sigma \conc \langle x_i \rangle) \equiv \lambda_{n_i} + \alpha$ for
some $\alpha<\om^{\beta_{n_i}}$. Since $\LL_{\sigma \conc \langle
x_{i+1} \rangle}$ is a shuffle of $\LL_{\sigma \conc \langle x_{i+1}
\rangle} \restriction P_{\sigma \conc \langle x_i \rangle}$ and
$\QQ_{i+1}$, by Lemma \ref{ATRshuffle} we would have
\begin{align*}
\LL_{\sigma \conc \langle x_{i+1} \rangle} & \preceq
(\LL_{\sigma \conc \langle x_{i+1} \rangle} \restriction P_{\sigma \conc \langle x_i \rangle}) \nsum \QQ_{i+1}\\
& \equiv (\lambda_{n_i} + \alpha) \nsum \QQ_{i+1}\\
& \prec \lambda_{n_i} + \om^{\beta_{n_i}} \equiv \lambda_{n_i+1} \leq \lambda_{n_{i+1}}.
\end{align*}
On the other hand the induction hypothesis implies that
$\lambda_{n_{i+1}} \leq \rk_\PP (\sigma \conc \langle x_{i+1}
\rangle) \preceq \LL_{\sigma \conc \langle x_{i+1} \rangle}$. The
contradiction establishes the claim.

Then
\[
\lambda = \gamma + \sum_{i \in \N} \om^{\beta_{n_i}} \preceq \LL_\sigma.
\]
To check that $\LL_\sigma \in \Lin (\PP_\sigma)$ recall that
$P_\sigma = \bigcup_{i \in \N} Q_i$ and notice that when $x \in Q_i$
and $y \in Q_{j+1}$ with $i \leq j$ we have $x_i \nleq_P x$ and $x_i
\leq_P x_j \leq_P y$, which imply $y \nleq_P x$.
\end{proof}

We can also prove \MLE\ in \ATR\ using ideas from Montalb\'{a}n's proof of
the first part of Theorem \ref{Montalban}. Many modifications are
needed, since Montalb\'{a}n did assume Theorem \ref{dJP}. This alternative
proof is more complex than the one above, and we have not included it
in this paper.

If instead one begins the proof of Theorem \ref{ATR->MLE} with the tree
of bad sequences with each node labeled with the Cantor normal forms of
its rank (in a unified recursive notation system), then the only
noneffective step in the transfinite recursion needed for the
construction is the extraction of the subsequence to satisfy condition
(4) from the sequence satisfying (1)--(3). This step can easily be done
computably in the double jump of this labeled tree by Lemma
\ref{wqo(set)eff}. Thus relative to the double jump of the labeled tree
of bad sequences, the entire construction can be seen as an effective
transfinite recursion. This procedure  thus provides a uniform
construction of a maximal linear extension computable in the double
jump of the assignments of ranks and the corresponding Cantor normal
forms to the nodes of the tree. So one can compute the level of the
hyperarithmetic hierarchy at which one has a uniformly recursive
construction of a maximal linear extension. This contrasts with
Montalb\'{a}n's result in \ref{Montalban} that while there is always a
recursive maximal linear extension, it cannot be computed uniformly
even hyperarithmetically.

After we had essentially the proof presented above of \MLE\ in \ATR\
(in its effective form), Harvey Friedman (in response to a lecture
given by the second author on some of the material in this paper)
informed us that he had a proof of this result using the tree of bad
sequences in some handwritten notes that also contained many
calculations of the ranks of such trees for many specific partial
orders. He dates these notes probably to 1984. We have not seen his
proof and do not know if it is the same or different from the one we
presented here.

\section{\MLE\ implies \ACA}\label{sect:ACA}

The first part of Theorem \ref{Montalban} suggests that to exploit the
strength of \MLE\ within \RCA\ we need to use partial orders which
\RCA\ cannot recognize as not being wpo's. Such a partial order will be
defined using the linear order supplied by Lemma \ref{lemma:0'}. Before
stating it, we recall the following definitions from \cite{HS}.

\begin{definition}
Within \RCA\ we say that a linear order $\LL = (L,{\leq_L})$
\emph{has order type $\om$} if $L$ is infinite and each element of
$L$ has finitely many $\leq_L$-predecessors.

\LL\ has \emph{has order type $\om + \om^*$} if each element of $L$
has either finitely many $\leq_L$-predecessors or finitely many
$\leq_L$-successors, and there are infinitely many elements of both
types.
\end{definition}

The existence of a linear order satisfying the first two conditions of
the following lemma is folklore.

\begin{lemma}\label{lemma:0'}
\RCA\ proves that there exists a (computable) linear order $\LL =
(L,{\leq_L})$ such that
\begin{enumerate}\renewcommand{\theenumi}{\alph{enumi}}
\item $\LL$ has order type $\om+\om^*$;\label{condL:a}
\item if there exists a descending sequence in $\LL$ then $\es'$
    exists;\label{condL:b}
\item the formula \lq\lq $x$ has finitely many
    $\leq_L$-predecessors\rq\rq\ is \SI01\ (and thus \lq\lq $x$
    has finitely many $\leq_L$-successors\rq\rq\ is
    \PI01);\label{condL:c}
\item for all $x$ with finitely many $\leq_L$-successors and for
    all $k \in \N$ there exists $y$ with finitely many
    $\leq_L$-successors such that $|[y,x]_\LL|>k$ (this means that
    there exist a one-to-one sequence $\sigma$ of length $k$ such
    that $y \leq_L \sigma(i) \leq_L x$ for every
    $i<k$).\label{condL:d}
\end{enumerate}
\end{lemma}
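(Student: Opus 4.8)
The plan is to build $\LL$ by a stage construction $\LL=\bigcup_s\LL_s$ carried out relative to a fixed computable enumeration $\es'=\bigcup_s\es'_s$, with each $\es'_s$ finite, $\es'_s\subseteq\es'_{s+1}$, and $n\in\es'_s$ only when $n<s$. The order $\LL$ will genuinely be of type $\om+\om^*$, but the way its domain splits into an $\om$-part (the elements with finitely many $\leq_L$-predecessors) and an $\om^*$-part (the elements with finitely many $\leq_L$-successors) will encode $\es'$. Since the $\om$-part will be a well-order of type $\om$, any infinite descending sequence in $\LL$ has all but finitely many terms in the $\om^*$-part, and it is by extracting $\es'$ from such a sequence that one gets \eqref{condL:b}. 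For \eqref{condL:c} it is essential that the $\om$-part be the \emph{recursively enumerable} side: an element should enter the $\om$-part because of a finite event (some number entering $\es'$) and stay there, so that ``$x$ has finitely many $\leq_L$-predecessors'' becomes $\SI01$, while membership in the $\om^*$-part is the default, and only $\PI01$. The existence of a linear order satisfying \eqref{condL:a} and \eqref{condL:b} is folklore, as the statement says; the work is to produce one that also satisfies \eqref{condL:c} and \eqref{condL:d}.

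In the construction each $\LL_s$ is a finite order presented as a concatenation of a \emph{committed} lower part, a finite chain destined to become an initial segment of the $\om$-part, and an \emph{active} upper part, a finite chain of elements that are for the moment tentative members of the $\om^*$-part. At each stage a fresh element is added, and whenever $\es'$ enumerates a new number a bottom chunk of the active part is \emph{committed}: those elements are reclassified into the $\om$-part by appending them, in their existing order, onto the top of the committed part, which is precisely the segment of $\LL_s$ lying immediately below them. Because only bottom chunks of the active part are ever committed, and they are appended just below themselves, the order between any two elements once both are present never changes; hence $<_L$ is $\Delta^0_1$ and $\LL$ is a computable linear order. The schedule is arranged so that, using that $\es'$ is infinite, infinitely many elements are eventually committed, and since each committed element ends up with only finitely many predecessors the committed part is a well-order of type $\om$; and so that, using that $\es'$ is coinfinite, infinitely many elements are never committed, and since no element is ever placed above an element that is never committed, each such element has only finitely many successors, so they form an order of type $\om^*$ sitting above all committed elements. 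This gives \eqref{condL:a}, and \eqref{condL:c} is then immediate, since ``$x$ has finitely many $\leq_L$-predecessors'' holds exactly when $x$ is eventually committed. The main obstacle is to design the commitment schedule so that all of this holds \emph{and} the set of never-committed elements is not computable: if it were, one could compute an infinite descending sequence through the $\om^*$-part, and then in the $\om$-model of computable sets \eqref{condL:b} would fail, since $\es'$ is noncomputable. So the commitments must be driven by $\es'$ finely enough that the committed set, while $\SI01$, is not $\Delta^0_1$, and, more precisely, so that from an infinite descending sequence $D$ one can compute $\es'$: the terms of $D$ are eventually never-committed elements, and since commitments only ever remove bottom chunks of the active part, seeing a term of $D$ lying below an element $x$ in some $\LL_s$ while $x$ is still active is a finite certificate that $x$ will never be committed; running this against the enumeration of $\es'$ lets $D$ decide membership in $\es'$, so $\es'\le_T D$ and \eqref{condL:b} holds in \RCA. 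Balancing the pacing of the additions against the sizes of the committed chunks so as to secure simultaneously the correct order type, the noncomputability needed for \eqref{condL:b}, and the $\SI01$-ness of \eqref{condL:c} is the delicate point of the argument.

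Finally, \eqref{condL:d} follows from \eqref{condL:a} and \eqref{condL:c} in \RCA, with no further appeal to the construction. Suppose $x$ has finitely many $\leq_L$-successors and fix $k$. Since $\LL$ is infinite, no element has both finitely many predecessors and finitely many successors, so $x$ has infinitely many predecessors, and every element with finitely many predecessors is $<_L x$. A bound witnessing that $x$ has finitely many $\leq_L$-successors shows that all but finitely many elements are $\leq_L x$; combining this with the fact that \eqref{condL:a} provides, for every $m$, $m$ distinct elements with finitely many $\leq_L$-successors, we obtain more than $k$ distinct elements $z_0,\dots,z_k$ with $z_i\leq_L x$ and each $z_i$ having finitely many $\leq_L$-successors. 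Taking $y$ to be the $\leq_L$-least of the $z_i$, we get that $y$ has finitely many $\leq_L$-successors and $[y,x]_\LL$ contains $z_0,\dots,z_k$, so $|[y,x]_\LL|>k$, and a one-to-one listing of $k$ of these elements is the sequence $\sigma$ required in \eqref{condL:d}.
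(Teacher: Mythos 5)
Your overall architecture matches the paper's: a computable movable-marker construction in which the $\om$-part is the $\Sigma^0_1$ (``committed'') side, the $\om^*$-part is the $\Pi^0_1$ default, and a descending sequence, living eventually among the never-committed elements, computes $\es'$. But there are two genuine gaps. First, the construction is never actually given: you list the desiderata and then concede that ``balancing the pacing of the additions against the sizes of the committed chunks'' is the delicate point, without doing it. In particular, for \eqref{condL:b} you need an explicit link between a number $n$ and a bound, computable from the descending sequence, on the stages at which $n$ can enter $\es'$; the paper obtains this by fixing a one-to-one computable $f$ with range $\es'$ and inserting stage $s+1$ either immediately below $s$ or immediately above the $\leq_L$-largest $t$ with $f(s+1)<f(t)$, so that any descending sequence consists of true stages $s_m$ with $f(s_m)\geq m$, whence $m\in\es'$ iff $(\exists n\leq s_m)\, f(n)=m$. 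Some such concrete bookkeeping is required, not optional.

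Second, and more decisively, your claim that \eqref{condL:d} follows from \eqref{condL:a} and \eqref{condL:c} in \RCA\ ``with no further appeal to the construction'' is exactly the step the paper warns against. Your argument needs $k+1$ \emph{distinct} elements with finitely many $\leq_L$-successors lying below $x$. Condition \eqref{condL:a} only asserts that there are infinitely many such elements; since ``$z$ has finitely many $\leq_L$-successors'' is $\Pi^0_1$ here (and $\Sigma^0_2$ in general), assembling a one-to-one sequence of $k+1$ such witnesses is an induction on a $\Sigma^0_2$ formula, i.e., needs $\Sigma^0_2$ induction (or at least $B\Sigma^0_2$), which \RCA\ does not prove: the map sending a bound to the next witness is not computable, so the finite sequence cannot be built by primitive recursion. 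The paper explicitly notes that proving \eqref{condL:d} for the unmodified order ``appears to require $\Sigma^0_2$ induction'' and circumvents this by replacing each element $n$ by $n+1$ copies; then a \emph{single} application of ``infinitely many elements have finitely many successors'' produces one element $m\geq k$ below $x$, and the padding by itself supplies the $m+1>k$ elements of $[y,x]_\LL$. You need this padding trick, or some other device reducing \eqref{condL:d} to a single existential witness, rather than a collection of $k+1$ of them.
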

\begin{proof}
Fix a computable total one-to-one function $f$ with range $\es'$. We
first define \LL\ satisfying (\ref{condL:a}), (\ref{condL:b}) and
(\ref{condL:c}). Then we modify it to satisfy (\ref{condL:d}) as well.

We let $L=\N$ and define $\leq_L$ by stages: at stage $s$ we have
defined $\leq_L$ on $\{0, \dots, s\}$. At stage $s=0$ there are no
decisions to make. At stage $s+1$ we add $s+1$ to the order as
follows:
\begin{itemize}
\item if $f(s+1) > f(s)$ then $s+1$ occurs immediately before $s$;
\item if $f(s+1) < f(s)$ then let $t \leq s$ be the
    $\leq_L$-largest element such that $f(s+1) < f(t)$, and put
    $s+1$ immediately after $t$.
\end{itemize}
This completes the definition of $\leq_L$, which is clearly computable.

From the construction it is immediate that
\begin{enumerate}
\item if $s<t$ is such that $s <_L t$ then $s <_L r$ for every
    $r>t$;
\item if $r<s$ is such that $f(s)<f(r)$ then $r <_L s$.
\end{enumerate}\smallskip

To check that (\ref{condL:a}) holds we need to show that each element
of $L$ has either finitely many $\leq_L$-predecessors or finitely many
$\leq_L$-successors, and there are infinitely many elements of each
type.

If $s$ is a true stage for $f$, i.e.\ $(\forall t>s)\, f(t)>f(s)$, we
have $(\forall t>s)\, t <_L s$. In fact, if $t>s$ were least such that
$t >_L s$ there would exist $r<s$ with $s <_L r <_L t$ such that
$f(t)<f(r)$. Since $s <_L r$ and $r<s$, by (2), we have $f(s)>f(r)$,
which implies $f(s)>f(t)$. Thus if $s$ is a true stage for $f$,
$L_{(\geq_Ls)} \subseteq \{0, \dots, s\}$ is finite.

If $s$ is not a true stage for $f$, i.e.\ $(\exists t>s)\, f(t)<f(s)$,
let $t_0+1$ be the least such $t$. Then $f(t_0+1)<f(s) \leq f(t_0)$ and
$s <_L t_0+1$. By (1), $L_{(\leq_Ls)} \subseteq \{0, \dots, t_0\}$ is
finite.

There exist infinitely many true stages for $f$, otherwise we could
easily define a descending sequence in $\N$. There also exist
infinitely many nontrue stages for $f$: otherwise if $n_0$ is such that
all $n \geq n_0$ are true stages, we have $(\exists n)\, f(n)=m$ if and
only if $(\exists n \leq n_0+m)\, f(n)=m$ for every $m$, which
contradicts the incomputability of $\es'$.

We now show that every descending sequence in \LL\ computes $\es'$,
establishing (\ref{condL:b}). If $(s_m)$ is a $<_L$-descending
sequence, by the observations above we have that each $s_m$ is a true
stage for $f$ and that $s_m<s_{m+1}$, so that $f(s_m)<f(s_{m+1})$.
Hence $f(s_m) \geq m$. Therefore
\[
(\forall m) ((\exists n)\, f(n)=m \iff (\exists n \leq s_m)\, f(n)=m).
\]
Thus $\es'$ can be computed from $(s_m)$.

Since \lq\lq $s$ is a true stage for $f$\rq\rq\ is a \PI01\
statement, (\ref{condL:c}) holds.

Thus \LL\ satisfies (\ref{condL:a}), (\ref{condL:b}) and
(\ref{condL:c}). Notice that proving (\ref{condL:d}) for \LL\ appears
to require \SI02 induction, which is not available in \RCA. We define a
linear order $\LL' = (L',{\leq_{L'}})$ satisfying (\ref{condL:d}) by
replacing each $n \in L$ by $n+1$ distinct elements and otherwise
respecting the order of \LL. To be precise, we set
\begin{gather*}
L' = \set{(n,i)}{n \in L \land i \leq n} \quad \text{and}\\
(n,i) \leq_{L'} (m,j) \iff n <_L m \lor (n=m \land i \leq j).
\end{gather*}
It is easy to check that $\LL'$ satisfies (\ref{condL:a}),
(\ref{condL:b}) and (\ref{condL:c}).

To prove (\ref{condL:d}) consider $x=(n,i)$ with finitely many
$\leq_L$-successors and a given $k$. Let $m \in L$ be such that $m <_L
n$, $L_{(\geq_L m)}$ is finite, and $m \geq k$. Such an $m$ exists
because there exist infinitely many $m \in L$ such that $L_{(\geq_L
m)}$ is finite. Let $y=(m,0) \in L'$: since $[y,x]_{\LL'} \supseteq
\set{(m,j)}{j \leq m}$, we have $|[y,x]_{\LL'}| > m \geq k$, as
required.
\end{proof}

\begin{theorem}\label{MLE->ACA}
\RCA\ proves that \MLE\ implies \ACA.
\end{theorem}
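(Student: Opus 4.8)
The standard way to prove $\ACA$ from a principle like $\MLE$ is to show that $\MLE$ lets us construct the range of an arbitrary injection $f:\N\to\N$ (equivalently, to produce $\es'$ relative to any oracle, by relativizing). So the plan is to feed $\MLE$ a carefully chosen partial order built from the linear order $\LL$ of Lemma \ref{lemma:0'}, arranged so that $\RCA$ cannot see that it fails to be a wpo, and then to extract $\es'$ from the maximal linear extension that $\MLE$ hands back. Concretely, I would work with the disjoint union $\PP = \LL \oplus \LL$ (or $\LL \oplus \om$, or $\LL\oplus\LL^*$ — one of these variants), so that we are only ever applying the restricted form $\MLE$ for disjoint unions of two linear orders, matching the statement of Theorem \ref{main}(3).

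First I would argue in $\RCA$ that $\PP$ \emph{is} a wpo, or at least that $\RCA$ proves every $f:\N\to P$ has the required pair $i<j$: since $\LL$ has order type $\om+\om^*$ and condition (\ref{condL:b}) says a descending sequence in $\LL$ computes $\es'$, $\RCA$ cannot produce such a descending sequence, and a little combinatorics (two linear orders of type $\om+\om^*$, any infinite sequence into their union must have two comparable terms or else yield a descending sequence in one of them) gives the wpo property provably in $\RCA$. Then $\MLE$ applies and yields a maximal linear extension $\QQ$ of $\PP$. The heart of the proof is then to show that, working in $\RCA$, from $\QQ$ together with $\PP$ one can decide membership in $\es'$ — i.e.\ decide for each $x$ whether $x$ has finitely many $\leq_L$-successors. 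The idea is that in a maximal shuffle the "$\om^*$-part" of one copy of $\LL$ must be interleaved with the "$\om$-part" of the other in a way that is forced, and condition (\ref{condL:d}) (the blow-up of intervals in the $\om^*$-part) guarantees that between any two fixed elements of the $\om^*$-tail one can find arbitrarily long finite chains; this rigidity lets a $\Delta^0_1$ search on $\QQ$ detect exactly which elements lie in the finite-successor part.

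In more detail, I expect the argument to run: assume toward a contradiction that $\es'$ does not exist (relative to the given oracle); then $\LL$ has no descending sequence, so $\LL$, and hence $\PP$, "looks like" a wpo to $\RCA$ and $\MLE$ produces $\QQ$. But $\QQ$ being a well-order (all linear extensions of a wpo are well-orders by Lemma \ref{wqo(ext)}) together with maximality forces $\QQ \equiv \LL \nsum \LL$ — except that $\LL$ is not a well-order, so instead one shows that if $\QQ$ existed with the stated maximality property, the interleaving pattern of the two copies inside $\QQ$ would have to track, level by level, which elements are true stages for $f$, and reading this off $\QQ$ computes $\es'$, a contradiction. Unwinding the contradiction: since $\RCA + \MLE$ proves that $\es'$ exists for every oracle $\es = X$, we get $\ACA$.

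\begin{proof}
Work in $\RCA$ and assume $\MLE$. By relativization it suffices to prove that $\es'$ exists. Fix $f$ a computable one-to-one function with range $\es'$ and let $\LL = (L,{\leq_L})$ be the computable linear order supplied by Lemma \ref{lemma:0'}, with its four properties; recall in particular that by (\ref{condL:b}) any descending sequence in $\LL$ would compute $\es'$.

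\medskip
\textbf{Step 1: a partial order that looks like a wpo.} Let $\PP = \LL \oplus \LL$, the disjoint union of two copies of $\LL$ (we write $L^{(0)}$ and $L^{(1)}$ for the two copies). We claim that $\RCA$ proves: if there is no descending sequence in $\LL$, then $\PP$ is a wpo. Indeed, given $g : \N \to P$, infinitely many values of $g$ lie in one of the two copies, say $L^{(0)}$; among these, since $\LL$ restricted to those values is a linear order with no descending sequence, there are $i<j$ with $g(i) \leq_L g(j)$, hence $g(i) \leq_P g(j)$. Thus, so long as $\es'$ does not exist, $\PP$ satisfies the $\RCA$-definition of wpo, and $\MLE$ yields a maximal linear extension $\QQ = (P, {\leq_Q})$ of $\PP$. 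By Lemma \ref{wqo(ext)}, $\QQ$ is a well-order; in particular $\QQ$ has no $<_Q$-descending sequence.

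\medskip
\textbf{Step 2: reading $\es'$ off $\QQ$.} We argue that from $\QQ$ and $f$ one can compute $\es'$, which is a contradiction (to the assumption that $\es'$ does not exist, via $\Delta^0_1$ comprehension), and therefore $\es'$ must exist after all. Since $\QQ$ extends $\PP$, inside each copy $\QQ$ restricted to $L^{(k)}$ equals $\LL$; so each copy contributes an "$\om$-block" (elements with finitely many $\leq_L$-predecessors) followed, in the $\leq_Q$ ordering, by its "$\om^*$-block" (elements with finitely many $\leq_L$-successors), these being $\SI01$ and $\PI01$ respectively by (\ref{condL:c}). Because $\QQ$ is a well-order, the union of the two $\om^*$-blocks, ordered by $\leq_Q$, is well-ordered; but each $\om^*$-block is order-isomorphic (within $\QQ$) to $\om^*$. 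Two copies of $\om^*$ cannot be shuffled into a well-order unless one of them is, in effect, pushed entirely below the other or they are finitely interleaved at each level — more precisely, using property (\ref{condL:d}), between any fixed element $x$ of an $\om^*$-block and any $<_L$-earlier element $y$ of the same block there are finite $\leq_L$-chains of every length, and a maximal linear extension is forced to place cofinally many elements of the other copy among them, or else $\QQ$ would embed into $\rk(\PP)$ improperly (cf.\ Lemma \ref{upbnd} and the shuffle analysis of Lemma \ref{ATRshuffle}). Tracking, for each stage $s$, whether the element $s$ of copy $0$ has been overtaken in $\leq_Q$ by the $\om^*$-block of copy $1$ yields a $\Delta^0_1(\QQ \oplus f)$ predicate that agrees with "$s$ is a true stage for $f$", and hence, exactly as in the proof of (\ref{condL:b}) in Lemma \ref{lemma:0'}, computes $\es'$.

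\medskip
The contradiction shows that $\es'$ exists. Relativizing the whole argument to an arbitrary set $X$ shows $X'$ exists for every $X$, which is equivalent to $\ACA$ over $\RCA$.
\end{proof}

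*Remark: the delicate point is Step 2 — making the "forced interleaving" argument fully rigorous in $\RCA$ — and property (\ref{condL:d}) of Lemma \ref{lemma:0'} is exactly what is designed to carry it.*
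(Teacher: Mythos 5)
Your Step 1 is exactly the paper's setup: the same $\LL$ from Lemma \ref{lemma:0'}, the same $\PP = \LL \oplus \LL$, and the same observation that failure of the wpo property already yields $\es'$ via (\ref{condL:b}). The gap is Step 2, and it is not merely a matter of unfinished detail: the mechanism you sketch for extracting $\es'$ from $\QQ$ is not one that can be run in \RCA. You assert that each ``$\om^*$-block is order-isomorphic (within $\QQ$) to $\om^*$'' and that two such blocks ``cannot be shuffled into a well-order''; but in \RCA\ the set $U$ of finite-successor elements does not exist as a set (it is only \PI01\ by (\ref{condL:c})), and exhibiting an isomorphism of it with $\om^*$ is precisely exhibiting a descending sequence --- the very thing we cannot produce without already having $\es'$. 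Your appeal to Lemma \ref{upbnd} and the ``shuffle analysis of Lemma \ref{ATRshuffle}'' is likewise illegitimate here: those are \ATR\ results, and the whole point of the reversal is that only \RCA\ is available. Finally, ``whether $s$ has been overtaken in $\leq_Q$ by the $\om^*$-block of copy $1$'' quantifies over a \PI01\ class and is not a $\Delta^0_1(\QQ\oplus f)$ predicate, so the proposed direct computation of the true stages does not type-check.

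What the paper actually does at this point is a case analysis with two quite different mechanisms, neither of which is ``reading off true stages.'' Case I: every $D_i$ lies entirely $<_Q$-below every $U_{1-i}$. Then either some $x\in D_i$ lies $<_Q$-above all of $D_{1-i}$, in which case $U_{1-i} = \set{y\in L_{1-i}}{x<_Q y}$ exists as a set by $\Delta^0_1$ comprehension and from the set $U$ one builds a descending sequence in $\LL$ and applies (\ref{condL:b}); or else $\QQ$ itself has order type $\om+\om^*$, and then the linear extension $\KK=\LL_0+\LL_1$ fails to embed into $\QQ$ (any $x\in U_0\cup D_1$ has infinitely many $\leq_K$-predecessors and infinitely many $\leq_K$-successors), contradicting maximality. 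Case II: some $y\in U_{1-i}$ lies $<_Q$-below some $x\in D_i$. Here maximality is used positively: it supplies an embedding $g$ of the linear extension $\JJ=2\cdot\LL$ into $\QQ$, and the interval-counting property (\ref{condL:d}) --- intervals of $\JJ$ inside the finite-successor part are twice as long as the corresponding intervals of $\LL$, while the relevant intervals of $\QQ$ are contained in explicitly finite sets --- forces, by pigeonhole, a point $z_0$ in the finite-successor part with $g(z_0)<_Q z_0$; iterating $z_{n+1}=g(z_n)$ gives a descending sequence in $\LL$, and (\ref{condL:b}) finishes. Property (\ref{condL:d}) is designed for this pigeonhole step, not for detecting an interleaving pattern. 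Without these two arguments your proof does not go through.
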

\begin{proof}
To prove \ACA\ it suffices to show that for every $X$ the jump of $X$,
$X'$, exists. We will do so for $X=\es$, as the obvious relativization
extends the proof to every $X$.

In \RCA\ let $\LL = (L,{\leq_L})$ be the linear order of Lemma
\ref{lemma:0'}. We will use the following notation:
\[
D = \set{x \in L}{L_{(\leq_{L} x)} \text{ is finite}}, \qquad
U = \set{x \in L}{L_{(\geq_{L} x)} \text{ is finite}}.
\]
Notice that the existence of $D$ and $U$ as sets is not provable in
\RCA, and expressions such as $x \in D$ should be viewed only as
shorthand for more complex formulas. It is immediate that $D$ is
downward closed and $U$ is upward closed in \LL. By (\ref{condL:a})
$U$ and $D$ are nonempty and form a partition of $L$. Moreover by
(\ref{condL:c}) the formulas $x \in U$ and $x \in D$ are respectively
\PI01 and \SI01.

We will apply \MLE\ to the partial order $\PP = \LL \oplus \LL$. To be
precise, $\PP=(P,{\leq_P})$ where $P=L \times 2$ and
\[
(x,i) \leq_P (y,j) \iff i=j \land x \leq_{L} y.
\]
For $i<2$ we write $L_i$, $D_i$ and $U_i$ for $L \times \{i\}$, $D
\times \{i\}$, and $U \times \{i\}$ respectively. $\LL_i =
(L_i,{\leq_P})$ is obviously isomorphic to $\LL$.

If \PP\ is not a wpo then, using the pigeonhole principle for two
colors in \RCA, there is a descending sequence in either $\LL_0$ or
$\LL_1$. Hence there exists a descending sequence in $\LL$ and, by
(\ref{condL:b}), $\es'$ exists.

We thus assume that \PP\ is a wpo, so that \MLE\ applies and there
exists a maximal linear extension $\QQ = (P, {\leq_Q})$ of \PP. The
proof of the existence of $\es'$ now splits in two cases, depending on
the properties of \QQ.\smallskip

\textbf{Case I.} For all $i<2$, $x \in D_i$ and $y \in U_{1-i}$ we have
$x <_Q y$.

If for some $i<2$ there exists $x \in D_i$ such that $y <_Q x$ for
all $y \in D_{1-i}$ then notice that $U_{1-i} = \set{y \in L_{1-i}}{x
<_Q y}$ exists as a set and therefore $U$ exists a set. Then we can
define a function which maps each $x \in U$ to some $y \in U$ with $y
<_{L} x$. We can use this function to define a descending sequence in
$\LL$ and apply (\ref{condL:b}). Hence $\es'$ exists, so that the
proof is complete. The same argument applies if for some $i<2$ there
exists $x \in U_i$ such that $x <_Q y$ for all $y \in U_{1-i}$.

We thus assume that $(\forall i<2)\, (\forall x \in D_i)\, (\exists y
\in D_{1-i})\, x <_Q y$ and $(\forall i<2)\, (\forall x \in U_i)\,
(\exists y \in U_{1-i})\, y <_Q x$. This implies that for every $x \in
P$ either $P_{(\leq_Q x)}$ or $P_{(\geq_Q x)}$ is finite. (Thus \QQ\
has order type $\om+\om^*$.) Now consider the linear extension
$\KK=(P,{\leq_K})$ of \PP\ defined by
\[
(x,i) \leq_K (y,j) \iff i<j \lor (i=j \land x \leq_{L} y).
\]
In other words, $\KK=\LL_0+\LL_1$. Every $x \in U_0 \cup D_1$ is such
that both $P_{(\leq_K x)} \supseteq D_0$ and $P_{(\geq_K x)} \supseteq
U_1$ are infinite. This implies $\KK \npreceq \QQ$, contradicting the
maximality of \QQ.\smallskip

\textbf{Case II.} There exist $i<2$, $x \in D_i$ and $y \in U_{1-i}$
such that $y <_Q x$. To simplify the notation, we assume $i=0$.

Now consider the linear extension $\JJ=(P,{\leq_J})$ of \PP\ defined by
\[
(x,i) \leq_J (y,j) \iff x <_{L} y \lor (x=y \land i \leq j).
\]
In other words, $\JJ= 2 \cdot \LL$. Notice that it is easily provable
in \RCA\ that for all $z,w \in U_i$ with $z \leq_{L_i} w$, we have
$|[z,w]_\JJ| = 2 \cdot |[z,w]_{\LL_i}| -1$.

Since \QQ\ is maximal there exists $g: P \to P$ which witnesses $\JJ
\preceq \QQ$. The proof splits in two subcases.

\textbf{Subcase IIa.} There exists $w_0 \in U_1$ such that $g(w_0)
\leq_Q x$.

We claim that there exists $w \in U_1$ with $w \leq_{L_1} w_0$
satisfying $g(z) \in L_1$ for all $z \in U_1$ such that $z \leq_{L_1}
w$. To see this let
\begin{align*}
A & = \set{x' \in {L_0}_{(\leq_{L_0}x)}}{(\exists w \in L_1)\, g(w) = x'}, \quad \text{and}\\
B & = \set{x' \in {L_0}_{(\leq_{L_0}x)}}{(\exists w \in D_1)\, g(w) = x'}.
\end{align*}
Since $x \in D_0$, \RCA\ proves the existence of $A$ and $B$ by bounded
\SI01-comprehension (recall that $D_1$ is \SI01). Let $C = A \setminus
B$. Then \RCA\ proves that $C$ exists and is finite. The subcase
hypothesis implies that $C \neq \es$, as $g(w_0) \in C$. Let $x'_0$ be
the minimum of $C$ with respect to $\leq_{L_0}$ and let (since $x'_0
\in A$) $w' \in L_1$ be such that $g(w')=x'_0$. Since $x'_0 \notin B$
and $g$ is one-to-one we have $w' \in U_1$. Any $w \in U_1$ such that
$w <_{L_1} w'$ has the required property.

Fix $w$ as above, and notice that $g(z) \in U_1$ for any $z \in U_1$
with $z \leq_{L_1} w$. In fact, $P_{(<_J z)} \supseteq
{L_1}_{(<_{L_1} z)}$ is infinite while $P_{<_Q x'} \subseteq
{L_0}_{(<_{L_0}x)} \cup {L_1}_{(<_{L_1} x')}$ is finite when $x' \in
D_1$ (recall that in this case $x' \leq_Q y \leq_Q x$).

We now wish to find $z_0 \leq_{L_1} w$ such that $g(z_0) <_Q z_0$ (and
hence $g(z_0) <_{L_1} z_0$, because $g(z_0) \in L_1$ by our choice of
$w$). If $g(w) <_{L_1} w$ it suffices to let $z_0=w$. If $w \leq_{L_1}
g(w)$ let, by (\ref{condL:d}), $z_0 \in U_1$ be such that $z_0
\leq_{L_1} w$ and
\begin{align*}
|[z_0,w]_{\LL_1}| & > |[w,g(w)]_{\LL_1}| + |{L_0}_{(<_{L_0}x)}|.\\
\intertext{Then, using $g(w) \leq_Q x$, we have}
|[z_0,w]_\JJ| & = 2 \cdot |[z_0,w]_{\LL_1}| -1\\
& > |[z_0,w]_{\LL_1}| + |[w,g(w)]_{\LL_1}| + |{L_0}_{(<_{L_0}x)}| -1\\
& = |[z_0,g(w)]_{\LL_1}| + |{L_0}_{(<_{L_0}x)}|\\
& \geq |[z_0,g(w)]_\QQ|.
\end{align*}
Since $g$ maps the interval $[z_0,w]_\JJ$ injectively into the interval
$[g(z_0),g(w)]_\QQ$, this implies that $g(z_0) <_Q z_0$, as we wanted.

Now recursively define $z_{n+1} = g(z_n)$. By \PI01 induction on $n$ it
is straightforward to show that $z_n \in U_1$ and $z_{n+1} <_{L_1} z_n
\leq_{L_1} w$. We have thus defined a descending sequence in $\LL_1$
and hence in \LL. By (\ref{condL:b}), $\es'$ exists.

\textbf{Subcase IIb.} For every $w \in U_1$ we have $x <_Q g(w)$.

Since for all $w \in U_0$ there exists $w' \in U_1$ such that $w' <_J
w$ we also have $x <_Q g(w)$ for every $w \in U_0$.

We claim that $w \in U_0$ and $g(w) \in L_0$ imply $g(w) \in U_0$. To
see this, we argue by contradiction and assume that there exists $w
\in U_0$ with $g(w) \in D_0$. Then $[x,g(w)]_\QQ \subseteq
{L_0}_{(\leq_{L_0}g(w))} \cup {L_1}_{(>_{L_1}y)}$ is finite. If, by
(\ref{condL:d}), $w' \in U_0$ is such that $|[w',w]_{\LL_0}| \geq
|[x,g(w)]_\QQ|$ then $g(w') \leq_Q x$, which contradicts what we
noticed above.

Notice also that $w \in {L_0}_{(\geq_{L_0}x)}$ and $g(w) \in L_1$
imply $y <_{L_1} g(w)$. Since ${L_1}_{(>_{L_1}y)}$ is finite, this
can happen only finitely many times. Thus, arguing as in the previous
subcase, we can find $w \in U_0$ such that $g(z) \in U_0$ for all $z
\in U_0$ such that $z \leq_{L_0} w$.

We mimic the argument used in the previous subcase, finding $z_0 \in
U_0$ with $z_0 \leq_{L_0} w$ such that $g(z_0) <_Q z_0$. When $g(w)
<_{L_0} w$ we set $z_0 = w$. If $w \leq_{L_0} g(w)$, we pick, by
(\ref{condL:d}), $z_0 \in U_0$ with $z_0 \leq_{L_0} w$ such that
\begin{align*}
|[z_0,w]_{\LL_0}| & > |[w,g(w)]_{\LL_0}| + |{L_1}_{(>_{L_1}y)}|.\\
\intertext{Then, using $y <_\QQ x <_\QQ z_0$, we have}
|[z_0,w]_\JJ| & = 2 \cdot |[z_0,w]_{\LL_0}| -1\\
& > |[z_0,w]_{\LL_0}| + |[w,g(w)]_{\LL_0}| + |{L_1}_{(>_{L_1}y)}| -1\\
& = |[z_0,g(w)]_{\LL_0}| + |{L_1}_{(>_{L_1}y)}|\\
& \geq |[z_0,g(w)]_\QQ|.
\end{align*}
Since $g$ maps the interval $[z_0,w]_\JJ$ injectively into the interval
$[g(z_0),g(w)]_\QQ$, this implies $g(z_0) <_Q z_0$, as we wanted.

We now define $z_{n+1} = g(z_n)$ for all $n$. Using again \PI01
induction, we can show that this is a descending sequence in $\LL_0$.
By (\ref{condL:b}), $\es'$ exists.
\end{proof}

\section{\MLE\ implies \ATR}\label{sect:ATR}

Although most properties of well-orders require \ATR, some of them
(such as the fact that well-orders are closed under exponentiation) can
be proved in \ACA. In this section we will use two of these facts, both
due to Jeff Hirst (\cite[Theorem 3.5 and Lemma 4.3]{Hirst94}).

\begin{theorem}\label{indec}
\ACA\ proves that if \LL\ is a well order, then $\om^\LL$ is
indecomposable, i.e.\ if $\om^\LL \preceq \II+\JJ$ then either $\om^\LL
\preceq \II$ or $\om^\LL \preceq \JJ$.
\end{theorem}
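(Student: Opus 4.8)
The plan is to prove indecomposability of $\om^\LL$ by working with Cantor normal forms, which \ACA\ has available for exponentials with base $\om$ (this is Hirst's work, and the paper explicitly notes that closure under exponentiation is provable in \ACA). First I would recall what it means for a well-order to be of the form $\om^\LL$: an element of $\om^\LL$ is coded by a finite descending sequence of elements of $\LL$ (a finite multiset over $\LL$, essentially a one-term-per-coordinate Cantor normal form), ordered lexicographically. The key structural fact, provable in \ACA, is that $\om^\LL$ is closed under "natural sum" of its elements and that any initial segment of $\om^\LL$ determined by an element is itself of smaller size; more to the point, $\om^\LL$ has the absorption property that $\alpha + \om^\LL \equiv \om^\LL$ whenever $\alpha \prec \om^\LL$. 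This last equivalence, together with a symmetric remark, is really the engine of indecomposability.

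Next, given an embedding $f : \om^\LL \hookrightarrow \II + \JJ$, I would split $\om^\LL$ into the preimage $f^{-1}(\II)$, which is a (possibly empty) initial segment $S$ of $\om^\LL$, and the preimage $f^{-1}(\JJ)$, which is the complementary final segment $T$, so that $\om^\LL \equiv S + T$ with $S \preceq \II$ and $T \preceq \JJ$. Here \ACA\ is needed to form $S$ and $T$ as sets (the predicate "$f(x) \in \II$" is arithmetic in the parameters, so arithmetic comprehension suffices) and to verify $S$ is an initial segment. Now I would argue: either $S$ is cofinal in $\om^\LL$ or it is bounded. If $S$ is bounded, say every element of $S$ is $\prec$ some fixed $\alpha \prec \om^\LL$, then $S \preceq \alpha$, and $T$ is a final segment of $\om^\LL$ missing only an initial piece of size $\prec \om^\LL$; by the absorption property $T \equiv \om^\LL$ (removing a proper initial segment from $\om^\LL$ leaves something equivalent to $\om^\LL$, since $\om^\LL \equiv \alpha + \om^\LL$ forces the tail $\om^\LL$ to embed into $T$), hence $\om^\LL \preceq T \preceq \JJ$. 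If instead $S$ is cofinal in $\om^\LL$, then I claim $\om^\LL \preceq S$: the point is that any cofinal initial segment of a well-order of the shape $\om^\LL$ is equivalent to the whole thing, because $\om^\LL$ has no largest element and any proper initial segment $(\om^\LL)_{\prec\beta}$ has order type $\prec \om^\LL$, so cofinality of $S$ means $S$ is not any proper such initial segment, i.e.\ $S = \om^\LL$ as a set, giving $\om^\LL \preceq \II$.

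The main obstacle I anticipate is making the "absorption"/"cofinal initial segment" arguments precise in \ACA\ without invoking comparability of well-orders (which needs \ATR) or $\Sigma^1_1$-type induction. The clean way around this is to use the explicit Cantor-normal-form coding of $\om^\LL$: an element below $\om^\LL$ is literally a finite string over $L$, and "$\beta \prec \om^\LL$" unwinds to a quantifier-free condition on that string, so statements like "$S$ is bounded below $\om^\LL$" versus "$S$ is cofinal" become arithmetic, and the embedding $\om^\LL \equiv \alpha + \om^\LL$ can be written down by an explicit arithmetically-defined map (shift every normal form by $\alpha$), whose verification is a finite combinatorial check on strings, needing only $\Sigma^0_1$ or $\Pi^0_1$ induction. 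So the real work is bookkeeping with normal forms rather than any genuinely new principle; I would cite Hirst's \cite[Theorem 3.5]{Hirst94} for the precise form of these facts and keep the argument above at the level of "split along $f$, then absorb."
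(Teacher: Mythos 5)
The paper itself offers no proof of Theorem \ref{indec}: it is imported directly from Hirst \cite[Theorem 3.5]{Hirst94}, so there is no in-paper argument to measure yours against. Judged on its own, your proof is correct and is the standard ``split along the embedding and absorb'' argument. The skeleton is sound: given an embedding $f$ of $\om^\LL$ into $\II+\JJ$, the preimage $S=f^{-1}(\II)$ is downward closed (because $\II$ is an initial segment of $\II+\JJ$ and $f$ is order preserving), and you case-split on whether $S$ is cofinal. Two remarks. First, the cofinal case is even simpler than you make it: a downward-closed cofinal subset of a linear order is the whole order, immediately, so $f$ maps all of $\om^\LL$ into $\II$; you should drop the detour through ``every proper initial segment $(\om^\LL)_{\prec\beta}$ has order type $\prec\om^\LL$,'' which invokes a nonembeddability fact you do not need and would otherwise have to justify in \ACA. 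Second, in the bounded case the clean statement is not ``$T\equiv\om^\LL$'' but the explicit embedding $\xi\mapsto\alpha+\xi$ computed on the normal-form strings coding $\om^\LL$, which is order preserving and lands in $\{x:\alpha\leq x\}\subseteq T$; checking $\alpha\leq\alpha+\xi$ and strict monotonicity in $\xi$ is a quantifier-free comparison of two coded strings, exactly as you anticipate. With that reading, note that nothing in the argument actually uses arithmetic comprehension or the hypothesis that \LL\ is a well-order: $S$ is $\Delta^0_1$ in $f$, the bounded/cofinal dichotomy is classical logic applied to an arithmetic formula, and the absorption map is computable from the data. That only strengthens the theorem as stated (\ACA\ proves it a fortiori), so it is a feature, not a bug, of your approach.
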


\begin{theorem}\label{Hirst}
\ACA\ proves that if $\LL_0$ and $\LL_1$ are well-orders then $\LL_0
\preceq \LL_1$ if and only if $\om^{\LL_0} \preceq \om^{\LL_1}$.
\end{theorem}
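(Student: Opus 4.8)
The plan is to prove the two implications separately. The direction ``$\LL_0 \preceq \LL_1 \implies \om^{\LL_0} \preceq \om^{\LL_1}$'' is the routine one and, I expect, already provable in \RCA: an embedding $f\colon \LL_0 \to \LL_1$ induces the map that sends the element $\om^{a_1} + \dots + \om^{a_k}$ of $\om^{\LL_0}$ (with $a_1 \geq_{L_0} \dots \geq_{L_0} a_k$) to $\om^{f(a_1)} + \dots + \om^{f(a_k)}$. Since $f$ is order preserving the new exponent sequence is again nonincreasing, so the value is a legitimate element of $\om^{\LL_1}$; and since the order on $\om^\LL$ is by definition the lexicographic order on nonincreasing exponent sequences, $f$ being order preserving makes the induced map order preserving. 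This uses neither well-foundedness nor \ACA, so all the content of the theorem is in the converse.

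For ``$\om^{\LL_0} \preceq \om^{\LL_1} \implies \LL_0 \preceq \LL_1$'' I would argue in \ACA, where $\om^{\LL_0}$ and $\om^{\LL_1}$ are well-orders, so that we may pass to least elements of arithmetically defined sets, rule out infinite descending sequences, and invoke the indecomposability of $\om^\LL$ (Theorem \ref{indec}). Fix an embedding $g\colon \om^{\LL_0} \to \om^{\LL_1}$. Observe that $\{\om^a \mid a \in L_0\}$, ordered as in $\om^{\LL_0}$, is isomorphic to $\LL_0$ via $a \mapsto \om^a$ --- it is exactly the set of additively indecomposable elements of $\om^{\LL_0}$, a fact \RCA\ checks from Cantor normal forms --- and likewise for $\LL_1$. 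The natural move is to extract an embedding $h\colon \LL_0 \to \LL_1$ by reading off leading exponents: let $h(a)$ be the leading exponent of the Cantor normal form of $g(\om^a)$. That $h$ is \emph{weakly} order preserving is immediate: if $a <_{L_0} a'$ then $\om^a$ precedes $\om^{a'}$ in $\om^{\LL_0}$, hence $g(\om^a)$ precedes $g(\om^{a'})$ in $\om^{\LL_1}$, and leading exponents are nondecreasing along the order of $\om^{\LL_1}$.

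The main obstacle is upgrading this to \emph{strict} monotonicity, which finishes the proof. Here is the approach. Suppose $a <_{L_0} a'$ with $h(a) = h(a') = b$, and let $s$ be the immediate successor of $a$ in $\LL_0$ (it exists, as $a$ is not the maximum), so that $\om^s$ is the supremum in $\om^{\LL_0}$ of the elements $\om^a \cdot n$ and $s \leq_{L_0} a'$. Every element of the interval $[g(\om^a), g(\om^{a'}))$ of $\om^{\LL_1}$ has leading exponent exactly $b$; comparing the natural-number coefficients of $\om^b$ in the Cantor normal forms of the $g(\om^a \cdot n)$ --- they are nondecreasing and bounded by that of $g(\om^{a'})$ --- one sees they are eventually constant, so $g$ embeds a final segment of $[\om^a, \om^s)$, still of order type that of $\om^{\LL_0 \restriction \leq a}$, into a single block of $\om^{\LL_1}$ isomorphic to $\om^{\LL_1 \restriction {<}b}$. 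Hence $\om^{\LL_0 \restriction \leq a} \preceq \om^{\LL_1 \restriction {<}b}$. The delicate step --- where \ACA, via Theorem \ref{indec} and the well-foundedness of the exponentials, really enters, and which I expect to require the most care --- is converting this into a contradiction. The cleanest route I see is to take the $<_{L_0}$-least $a$ at which $h$ fails to be injective: minimality then forces $h$ restricted to $\LL_0 \restriction \leq a$ to be a strictly increasing map into $\LL_1 \restriction \leq b$ with top value $b$, so that $\LL_0 \restriction \leq a \preceq \LL_1 \restriction \leq b$, and playing this off against $\om^{\LL_0 \restriction \leq a} \preceq \om^{\LL_1 \restriction {<}b}$ together with indecomposability should yield the contradiction. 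A possibly more robust packaging is to build $h$ from the start by an arithmetical transfinite recursion (available in \ACA) and to prove, by the same Cantor normal form analysis, that it never runs out of room. Either way the genuine work is this bookkeeping; the remainder is routine Cantor normal form manipulation.
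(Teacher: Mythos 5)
First, a remark on the comparison you asked for: the paper does not prove this statement at all --- it imports it, together with Theorem \ref{indec}, from Hirst (\cite[Theorem 3.5 and Lemma 4.3]{Hirst94}) --- so there is no in-paper proof to measure yours against; I will assess your argument on its own. The forward direction is fine (and indeed needs only \RCA). The converse has a genuine gap, and it lies in the overall strategy, not in a detail: the map $h(a) = {}$leading exponent of $g(\om^a)$ can fail to be injective even when $\LL_0 \preceq \LL_1$ holds, so no contradiction can be extracted from a collision. Concretely, take $\LL_0 = \LL_1 = \{0 < 1\}$, so that $\om^{\LL_0} \cong \om^2$, and let $g(\om \cdot n + m) = \om \cdot (n+5) + m$; then $g(\om^0) = \om \cdot 5 + 1$ and $g(\om^1) = \om \cdot 6$ both have leading exponent $1$, so $h(0) = h(1)$, yet certainly $\LL_0 \preceq \LL_1$. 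Moreover the two facts you derive at a least collision point are mutually consistent rather than contradictory: $\om^{\LL_0 \restriction \leq a} \preceq \om^{\LL_1 \restriction {<} b}$ says, morally, that the order type of $\LL_0 \restriction \leq a$ is at most that of $\LL_1 \restriction {<} b$, which \emph{implies} $\LL_0 \restriction \leq a \preceq \LL_1 \restriction \leq b$ rather than conflicting with it (in the example above, with $a=0$ and $b=1$, the two facts reduce to $\om \preceq \om$ and ``a one-point order embeds in a two-point order''). What the collision analysis really shows is that there is room to re-route $a$ strictly below $b$; but converting $\om^{\LL_0 \restriction \leq a} \preceq \om^{\LL_1 \restriction {<} b}$ into an actual embedding of $\LL_0 \restriction \leq a$ into $\LL_1 \restriction {<} b$ is exactly the theorem being proved, at a smaller instance, so pursuing this turns the argument into a transfinite recursion.

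That exposes the second problem: your fallback of building $h$ ``by an arithmetical transfinite recursion (available in \ACA)'' is not available in \ACA\ --- arithmetical transfinite recursion is by definition what \ATR\ adds to \ACA. In the present paper the theorem is used inside \ACA\ precisely in order to derive \ATR\ (Theorem \ref{MLE->ATR}), so a proof consuming \ATR\ would be useless in context. To repair the argument you need a different, genuinely arithmetical definition of the embedding $\LL_0 \to \LL_1$: as the example shows, $g$ may push everything into the top ``block'' of $\om^{\LL_1}$, so the leading exponent of $g(\om^a)$ is the wrong invariant and the distinguishing information sits lower down in the Cantor normal forms (this extraction is the actual content of Hirst's Lemma 4.3). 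Your intermediate computations --- weak monotonicity of $h$, the stabilization of the $\om^b$-coefficients, and the resulting embedding of a final segment of $[\om^a, \om^s)$ into a single block --- are all correct and would be reusable, but the endgame as proposed cannot close.
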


We will also need the following Lemma, which is a much weaker version
of Lemma \ref{ATRshuffle}.

\begin{lemma}\label{shuffle}
\ACA\ proves that if \II\ and \JJ\ are well-orders without a maximum
element and \LL\ is a shuffle of \II\ and \JJ, then $\LL \preceq \II
\cdot \JJ$ or $\LL \preceq \JJ \cdot \II$.
\end{lemma}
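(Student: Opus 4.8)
The plan is to reduce everything to two "counting functions" on $\LL$ and to split into the (symmetric) cases according to whether $\LL$ has a final segment lying entirely in $I$ or entirely in $J$. We may assume $I$ and $J$ are nonempty, hence infinite (a nonempty well‑order without maximum is infinite).

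First I would introduce, identifying each element of a well‑order with the order type of the set of its strict predecessors, the functions $a\colon L\to\II+1$ and $b\colon L\to\JJ+1$, where $a(x)=\ot(\set{i\in I}{i\leq_L x})$ and $b(x)=\ot(\set{j\in J}{j\leq_L x})$; these exist in \ACA\ since they are arithmetically definable. The key elementary observation is that $x<_L x'$ implies $a(x)\leq a(x')$ and $b(x)\leq b(x')$, and that $a(x)=a(x')$ forces $x'\in J$ (two initial segments of a well‑order of the same order type coincide, so if $x'\in I$ then $x'$ would lie below $x$) and hence $b(x)<b(x')$; symmetrically with $I,J$ exchanged. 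Consequently $x\mapsto(a(x),b(x))$ is an order embedding of $\LL$ both into the lexicographic order on pairs with the $a$‑coordinate dominant and into the one with the $b$‑coordinate dominant.

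Next I would set $G=\set{x\in I}{(\forall j\in J)\,j<_L x}$ and $G'=\set{x\in J}{(\forall i\in I)\,i<_L x}$; both exist in \ACA. One checks: $G$ is a final segment of $\LL$ and of $\II$, $G'$ a final segment of $\LL$ and of $\JJ$; $a(x)=\ot(\II)$ exactly when $x\in G'$ and $b(x)=\ot(\JJ)$ exactly when $x\in G$; and $G$ and $G'$ cannot both be nonempty. So at least one of the cases $G'=\es$, $G=\es$ holds, and they are symmetric under swapping $\II\leftrightarrow\JJ$; I treat $G'=\es$ and obtain $\LL\preceq\JJ\cdot\II$, the case $G=\es$ yielding $\LL\preceq\II\cdot\JJ$ by the mirror argument. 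Assume $G'=\es$, so $a$ takes values in $\II$. If $x\notin G$ one verifies $\set{i\in I}{i\leq_L x}\subseteq I\setminus G$, so, writing $g_0=\min G$ when $G\neq\es$, the value $a(x)$ stays $\leq_\II g_0$. Hence $x\mapsto(b(x),a(x))$ (with the $\II$‑coordinate dominant, matching $\JJ\cdot\II=\sum_{i\in I}\JJ$) embeds the initial segment $\LL\setminus G$ of $\LL$ into $\sum_{i\leq_\II g_0}\JJ$. It remains to place the final segment $G$ into the leftover blocks $\sum_{i>_\II g_0}\JJ$: since $G$ is a final segment of the maximum‑free order $\II$ it is infinite with no maximum, so $G=1+(G\setminus\{g_0\})\equiv G\setminus\{g_0\}$ (absorption $1+\KK\equiv\KK$ for infinite $\KK$, provable in \RCA), and $G\setminus\{g_0\}\preceq\JJ\cdot(G\setminus\{g_0\})=\sum_{i>_\II g_0}\JJ$. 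Concatenating along $\LL=(\LL\setminus G)+G$ gives $\LL\preceq\sum_{i\leq_\II g_0}\JJ+\sum_{i>_\II g_0}\JJ=\sum_{i\in I}\JJ=\JJ\cdot\II$; if $G=\es$ the counting map alone already does it. Every ingredient — order types of initial segments, the absorption identity, finite concatenation of embeddings — is available in \ACA.

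The step I expect to be the real obstacle is precisely the bookkeeping of this last paragraph. The tempting approach of chopping $\LL$ into blocks delimited by the elements of $J$ (or $I$) produces blocks of order type "$\II+1$" (resp. "$\JJ+1$"), which need not embed into $\II$ (resp. $\JJ$) because these orders have no maximum, and relocating the extra top point into the next block fails at the limit stages of $\II$ (resp. $\JJ$), which receive no overflow. Getting the dichotomy via $G$ and $G'$ and then isolating the homogeneous final segment and shifting it by one block is what avoids this; once the case split and the shift are set up correctly, the rest is routine. (This argument does not seem to require Theorems \ref{indec} and \ref{Hirst}; alternatively one could try to transport the problem through $\LL\preceq\II\cdot\JJ\iff\om^\LL\preceq\om^{\II\cdot\JJ}$ and exploit indecomposability of $\om^{\II\cdot\JJ}$, but I would pursue the direct construction above.)
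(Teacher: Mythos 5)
Your proof is correct, and at bottom it is the same proof as the paper's: both split according to which of $I$ and $J$ is cofinal in $\LL$ (your condition $G'=\es$ is exactly ``$I$ is cofinal''), and both then build the embedding into $\JJ\cdot\II$ coordinatewise out of least-element-above maps --- on $J$ your pair $(b(x),a(x))=(s_J(x),t(x))$ is literally the paper's map, where $s_J$ is the $\JJ$-successor and $t(x)$ the $\leq_I$-least element of $I$ above $x$ in $\LL$. The difference is on the $I$-side. You use $(t'(x),s_I(x))$ with $t'(x)$ the least element of $J$ above $x$ in $\LL$; this overflows to the top of $\JJ+1$ exactly on the final segment $G$ of elements of $I$ lying $\LL$-above all of $J$, and that is what forces your extra surgery: cutting off $G$, shifting it one block to the right, and absorbing the shift. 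The paper instead puts the \emph{constant} $m=\min\JJ$ in the first coordinate for every $x\in I$, i.e.\ $f(x)=(m,s_I(x))$; order preservation still goes through (in the case $x\in I$, $y\in J$, $x<_L y$ one checks $s_I(x)\leq_I t(y)$, and if equality holds then $m\leq_J s_J(y)$ trivially), and the overflow you single out as ``the real obstacle'' simply never arises. So your route is sound but costs the $G/G'$ bookkeeping and the absorption step; incidentally, for that step you do not need the general identity $1+\KK\equiv\KK$ (whose provability in \RCA\ for arbitrary infinite well-orders is dubious, though harmless since you are working in \ACA): because $G$ is a final segment of the maximum-free order $\II$, the successor map $s_I$ already witnesses $G\preceq G\setminus\{g_0\}$ directly.
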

\begin{proof}
Let $\II= (I,{\leq_I})$, $\JJ= (J,{\leq_J})$ and (assuming $I$ and $J$
are disjoint) $\LL= (I \cup J,{\leq_L})$. At least one of $I$ and $J$
is cofinal in \LL. We assume that $I$ is cofinal in \LL\ and we define
an embedding $f$ of \LL\ into $\JJ \cdot \II$ (if $J$ is cofinal we
obtain $\LL \preceq \II \cdot \JJ$). Let $m$ be the $\leq_J$-least
element of \JJ. Using \ACA\ we can define the operations $s_I$ and
$s_J$ mapping each element of $I$ and $J$ to its successor according to
\II\ and \JJ. Similarly, again using \ACA, we can define the function
$t$ which maps $x \in J$ to the ${\leq_I}$-least $y \in I$ such that $x
<_L y$. Define $f: I \cup J \to J \times I$ as follows:
\[
f(x)=
\begin{cases}
(m,s_I(x)) & \text{if $x \in I$;}\\
(s_J(x),t(x)) & \text{if $x \in J$.}
\end{cases}
\]

To see that $f$ preserves order, consider the four possible cases. If
$x <_I y$ then $s_I(x) <_I s_I(y)$ and so $(m,s_I(x)) <_{J \times I}
(m,s_I(y))$. If $x <_J y$, then $t(x) \leq_J t(y)$ and $s_J(x) <_J
s_J(y)$ and so $(s_J(x),t(x)) <_{J \times I} (s_J(y),t(y))$. If $x \in
I$, $y \in J$ and $x <_L y$, then $s_I(x) \leq_I t(y)$ and, of course,
$m <_J s_J(y)$ and so $(m,s_I(x)) <_{J \times I} (s_J(y),t(y))$.
Finally, if $x \in J$, $y \in I$ and $x <_L y$, then $t(x) \leq_I y <_I
s_I(y)$ and so $(s_J(x),t(x)) <_{J \times I} (m,s_I(y))$ as required.
\end{proof}

\begin{theorem}\label{MLE->ATR}
\ACA\ proves that \MLE\ implies \ATR.
\end{theorem}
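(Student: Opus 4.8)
The plan is to deduce from \MLE\ the comparability of well-orders, that is, statement (2) of Theorem \ref{comp}, which already gives \ATR\ over \RCA. So, working in \ACA, fix well-orders $\LL_0$ and $\LL_1$; we must show $\LL_0 \preceq \LL_1$ or $\LL_1 \preceq \LL_0$. The first move is a normalization. If $\LL_0$ or $\LL_1$ is empty comparability is immediate, so assume both are nonempty. By Theorem \ref{Hirst}, comparability of $\LL_0$ and $\LL_1$ is equivalent to comparability of $\om^{\LL_0}$ and $\om^{\LL_1}$, so after renaming we may assume each $\LL_i$ is an $\om$-power of a nonempty well-order; in particular, by Theorem \ref{indec}, each $\LL_i$ is indecomposable, and each $\LL_i$ has no maximum element. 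We record a consequence used repeatedly below: since a proper initial segment of a well-order is strictly smaller than it, every proper final segment of an indecomposable well-order is $\equiv$ the whole order, and this is provable in \ACA.

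Now form the partial order $\PP = \LL_0 \oplus \LL_1$. As $\LL_0$ and $\LL_1$ are well-orders, $\PP$ is a wpo: a bad sequence for $\PP$ would, by the pigeonhole principle, contain an infinite descending sequence in $\LL_0$ or in $\LL_1$. Hence \MLE\ yields a maximal linear extension $\QQ$ of $\PP$; by Lemma \ref{wqo(ext)} it is a well-order, and by construction it is a shuffle of $\LL_0$ and $\LL_1$. Since $\LL_0 + \LL_1$ and $\LL_1 + \LL_0$ are themselves linear extensions of $\PP$, maximality gives $\LL_0 + \LL_1 \preceq \QQ$ and $\LL_1 + \LL_0 \preceq \QQ$. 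On the other hand, $\LL_0$ and $\LL_1$ have no maximum element, so Lemma \ref{shuffle} applies to $\QQ$; interchanging $\LL_0$ and $\LL_1$ if necessary, we may assume $\QQ \preceq \LL_0 \cdot \LL_1$.

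The heart of the proof is to improve this last bound to $\QQ \preceq \LL_0 + \LL_1$ or $\QQ \preceq \LL_1 + \LL_0$, exploiting the maximality of $\QQ$ together with the indecomposability of $\LL_0$ and $\LL_1$; classically this amounts to the fact that the maximal order type of a shuffle of two indecomposable ordinals $\LL_0$, $\LL_1$ is $\max(\LL_0 + \LL_1, \LL_1 + \LL_0)$, their natural sum. Granting this, together with the previous paragraph we obtain $\LL_1 + \LL_0 \preceq \QQ \preceq \LL_0 + \LL_1$ (or the symmetric chain). It then remains to check, in \ACA, that an order embedding $f \colon \LL_1 + \LL_0 \to \LL_0 + \LL_1$ already witnesses comparability of $\LL_0$ and $\LL_1$. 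Consider the image under $f$ of the initial copy of $\LL_1$. If it is contained in the initial copy of $\LL_0$, then $\LL_1 \preceq \LL_0$. Otherwise some element $b$ of the initial $\LL_1$ has $f(b)$ in the final copy of $\LL_1$; then every element of the domain that is $\geq b$ is mapped into that final copy, and the set of such elements consists of a final segment of the initial $\LL_1$ followed by the whole final $\LL_0$, hence (by indecomposability of $\LL_1$) is $\equiv \LL_1 + \LL_0$. Thus $\LL_1 + \LL_0 \preceq \LL_1$, so $\LL_0 \preceq \LL_1 + \LL_0 \preceq \LL_1$. Either way we are done.

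I expect the core step — sharpening Lemma \ref{shuffle}'s bound for the particular shuffle $\QQ$ — to be the main obstacle, since the analogous precise computation for arbitrary shuffles (Lemma \ref{ATRshuffle}) genuinely needs \ATR. The natural line of attack is to dissect $\QQ$ as a shuffle: letting $I_i$ be the set of points of $\LL_i$ lying, in $\QQ$, below some point of the other order, each $I_i$ is an initial segment of $\LL_i$, and at least one of the two complementary final segments is empty; using indecomposability one can peel off the nonempty final part as a summand $\equiv \LL_0$ or $\equiv \LL_1$ and recurse, or instead play the maximality of $\QQ$ against carefully chosen competing linear extensions of $\PP$ to pin its order type down to one of the two sums. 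Carrying this out within \ACA, where transfinite recursion on the relevant well-orders is not available, is the delicate point.
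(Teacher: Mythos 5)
Your overall architecture is sound up to one point: the reduction to comparability of $\om$-powers, the formation of $\PP=\LL_0\oplus\LL_1$, the lower bounds $\LL_0+\LL_1\preceq\QQ$ and $\LL_1+\LL_0\preceq\QQ$ from maximality, and the endgame extracting comparability from an embedding $\LL_1+\LL_0\preceq\LL_0+\LL_1$ are all fine in \ACA. But the proof has a genuine gap exactly where you flag it, and that gap is the entire content of the theorem: you never establish that $\QQ\preceq\LL_0+\LL_1$ or $\QQ\preceq\LL_1+\LL_0$. Lemma \ref{shuffle} only gives $\QQ\preceq\LL_0\cdot\LL_1$ (or the reverse product), and with just two orders this bound is useless for your purpose: it cannot be leveraged against the lower bounds, since e.g.\ $\LL_1+\LL_0\preceq\LL_0\cdot\LL_1$ carries no information. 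The exact computation of the maximal order type of a shuffle is Lemma \ref{ATRshuffle}, which is proved by a \DE11 transfinite induction on the rank of the tree of bad sequences and uses Cantor normal forms (Theorem \ref{CNF}); both of these are themselves equivalent to \ATR, so invoking ``classically this amounts to the natural sum'' is circular here. Your two suggested repairs do not close the gap: the ``peel off a final summand and recurse'' idea is a transfinite recursion along $\LL_0$ or $\LL_1$, which is precisely what \ACA\ lacks, and ``play maximality against competing extensions'' is left unspecified.

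For contrast, the paper sidesteps this obstacle entirely by reversing instead to clause (3) of Theorem \ref{comp}: assuming \ATR\ fails, it takes an infinite sequence $\langle\II_n\rangle$ of pairwise nonembeddable well-orders, passes to $\JJ_n=\om^{\om^{\II_n}}$, and applies \MLE\ to $\bigoplus$ of two infinite sums of the $\JJ_n$. The only upper bound ever needed is the crude product bound of Lemma \ref{shuffle}: the double exponential converts $\JJ_m\cdot\JJ_k$ into $\om^{\om^{\II_m}+\om^{\II_k}}$, and Theorems \ref{Hirst} and \ref{indec} then show that no third $\JJ_n$ embeds into any shuffle of $\JJ_k$ and $\JJ_m$. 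The contradiction comes from a purely combinatorial analysis of where the minima of the $\JJ_n$ sit in the maximal extension $\QQ$. This mechanism is unavailable in your two-order setting, because it essentially requires a third order that must fit inside the shuffle of the other two. If you want to salvage your approach you would need either a new \ACA-proof that the maximal shuffle of two indecomposable well-orders embeds into one of the two ordered sums, or to switch to the infinite-antichain formulation as the paper does.
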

\begin{proof}
We work in \ACA, assume that \ATR\ fails and work toward a
contradiction. By Theorem \ref{comp}, the failure of \ATR\ implies the
existence of a sequence $\left\langle \II_n \right\rangle$ of
well-orders that are pairwise mutually nonembeddable. For every $n$ let
$\JJ_n = \om^{\om^{\II_n}}$. Using Theorem \ref{Hirst} twice we have
that the $\JJ_n$'s are also pairwise mutually nonembeddable. Let $\JJ_n
= (J_n,{\leq_{J_n}})$: without loss of generality, we may assume that
the $J_n$'s are pairwise disjoint.

We claim that if $k,m$ and $n$ are distinct then $\JJ_n$ is not
embeddable in any shuffle of $\JJ_k$ and $\JJ_m$. To see this notice
that by Lemma \ref{shuffle} it suffices to prove that $\JJ_n \npreceq
\JJ_m \cdot \JJ_k$. Suppose the contrary, i.e.\ that $\om^{\om^{\II_n}}
\preceq \om^{\om^{\II_m}} \cdot \om^{\om^{\II_k}} = \om^{\om^{\II_m} +
\om^{\II_k}}$ (where the equality, which is really an isomorphism, is
provable in \RCA). Theorem \ref{Hirst} implies $\om^{\II_n} \preceq
\om^{\II_m} + \om^{\II_k}$. As $\om^{\II_n}$ is indecomposable (Theorem
\ref{indec}) we would then have either $\om^{\II_n} \preceq
\om^{\II_m}$ or $\om^{\II_n} \preceq \om^{\II_k}$, and so, by Theorem
\ref{Hirst} again, $\II_n \preceq \II_m$ or $\II_n \preceq \II_k$,
contrary to our choice of the $\II_i$.

Let $\LL_0 = \dsum_n \JJ_{2n}$ and $\LL_1 = \dsum_n \JJ_{2n+1}$: these
are well-orders by \cite[Theorem 12]{hirst-survey}, and we can define
the wpo $\PP = (P,{\leq_P})$ as $\LL_0 \oplus \LL_1$. By \MLE\ let $\QQ
= (P,{\leq_Q})$ be a maximal linear extension of \PP.

For every $n$ let $x_n$ be the least element of $\JJ_n$ with respect to
$\leq_{J_n}$ (and hence also to $\leq_Q$). Notice that, for example,
$x_{2n} <_P x_{2n+2}$ and $x_{2n+1} <_P x_{2n+3}$ (and hence also
$x_{2n} <_Q x_{2n+2}$ and $x_{2n+1} <_Q x_{2n+3}$) for every $n$.

We claim that
\begin{equation}\tag{*}\label{fact}
\text{if $F \subseteq \N$ is finite and $m \notin F$, then $\JJ_m
\npreceq \QQ \restriction \textstyle{(\bigcup_{i\in F} J_i)}$}
\end{equation}
To prove (\ref{fact}) suppose $f$ witnesses $\JJ_m \preceq \QQ
\restriction (\bigcup_{i\in F} J_i)$. Let $i$ and $k$ be such that
$x_{2i}$ and $x_{2k+1}$ are the largest of the $x_l$ for $l \in F$, $l$
even and odd respectively, such that $f(x), f(\hat{x}) \in J_l$,
respectively, for some $x, \hat{x} \in J_m$ (if the range of $f$
intersects only $J_l$ with $l$ even, or only $J_l$ with $l$ odd, the
argument is even simpler). For any $y >_m x, \hat{x}$, we must have
$f(y) \in J_{2i}$ or $f(y) \in J_{2k+1}$ as $i$ and $k$ are the largest
of their type and $f(y) >_Q f(x), f(\hat{x})$. Now $f$ provides an
embedding of a final segment of $\JJ_m$ (and so, by indecomposability,
of $\JJ_m$ itself) into a shuffle of $\JJ_{2i}$ and $\JJ_{2k+1}$,
contradicting what we proved earlier and establishing
(\ref{fact}).\medskip

Now consider the order of the $x_n$ in \QQ. This is a linear extension
of $\om \oplus \om$ (and so classically of order type $\om +\om$ or
$\om$ corresponding to Cases I and II below).

\textbf{Case I.} There exists $k$ such that $x_{2n} <_Q x_{2k+1}$ for
all $n$ (the reverse situation, where some $x_{2k}$ is above all the
$x_{2n+1}$, is similar). Notice that for every $n$ and $x \in J_{2n}$
we have $x <_Q x_{2n+2} <_Q x_{2k+1}$. Now consider the linear
extension $\LL = \dsum \JJ_n$ of \PP\ and suppose $f$ witnesses $\LL
\preceq \QQ$.

\textbf{Subcase Ia.} There exists $x \in P$ such that $x_{2k+1} \leq_Q
f(x)$. By the definition of \LL\ we have that for some $n$ we have
$x_{2k+1} <_Q f(x_{2n})$. Fix $x \in J_{2n}$: since $f(x) \geq_Q
f(x_{2n})$, the case hypothesis implies the existence of $l \geq k$
such that $f(x) \in J_{2l+1}$. Analogously, $f(x_{2n+1}) \in J_{2m+1}$
for some $m \geq k$. Therefore $f \restriction J_{2n}$ witnesses
$\JJ_{2n} \preceq \QQ \restriction (\bigcup_{l=k}^m J_{2l+1})$,
contradicting (\ref{fact}).

\textbf{Subcase Ib.} $f(x) <_Q x_{2k+1}$ for all $x \in P$. If
$f(x_{2k+2}) >_Q x_{2n}$ for all $n$ then for every $y \geq_L x_{2k+2}$
we have $f(y) \in J_{2n+1}$ for some $n<k$, so that $f \restriction
J_{2k+2}$ witnesses $\JJ_{2k+2} \preceq \QQ \restriction (\bigcup_{n<k}
J_{2n+1})$, against (\ref{fact}). Otherwise $f(x_{2k+2}) \leq_Q x_{2m}$
for some $m$, and $f \restriction J_{2k+1}$ witnesses $\JJ_{2k+1}
\preceq \QQ \restriction (\bigcup_{n<m} J_{2n} \cup \bigcup_{n<k}
J_{2n+1})$, again violating (\ref{fact}).

\textbf{Case II.} Neither version of Case I holds and so the $x_{2n}$
and $x_{2n+1}$ are cofinal in each other in \QQ\ and each has only
finitely many of them preceding it in \QQ. Consider now the linear
extension $\KK = \LL_0 + \LL_1$ of \PP\ and an embedding $g$ witnessing
$\KK \preceq \QQ$. By the cofinality assumption there is a $k$ such
that $g(x_1) <_Q x_{2k}, x_{2k+1}$. Thus $P_{(\leq_Q g(x_1))} \subseteq
\bigcup_{i<2k} J_i$. Notice that $g$ maps every $J_{2n}$ to $P_{(\leq_Q
g(x_1))}$. In particular $g \restriction J_{2k}$ witnesses $\JJ_{2k}
\preceq \QQ \restriction (\bigcup_{i<2k} J_i)$, for one more
contradiction to (\ref{fact}).
\end{proof}

\section{\ATR\ and \MC\ are equivalent}\label{Sect:MC}

We prove \MC\ in \ATR\ in a fashion similar to the way we proved \MLE.
For this purpose, we adapt the proof in \cite{Schm81}, which translated
literally into the language of second order arithmetic requires the use
of \SI11 induction. We will avoid the use of \SI11 induction by using
the same approach we took in Section \ref{sect:forward}. The proof of
\MC\ in \cite{Wolk,KT,Harz} is based on Rad\'{o} Selection Lemma (a weak
form of the Axiom of Choice) and can also be formalized in \ATR.

\begin{definition}
In \RCA\ we define, for a partial order $\PP = (P,{\leq_P})$, the
\emph{tree of descending sequences of \PP}:
\[
\Desc(\PP) = \set{\sigma \in \N^{<\N}}{(\forall i < \lh(\sigma)) (\sigma(i) \in P \land (\forall j<i)\, \sigma(i) <_P \sigma(j))}.
\]
\end{definition}

Notice that \PP\ is well founded (as a partial order) if and only if
$\Desc(\PP)$ is well founded as a tree. Thus if \PP\ is well founded we
can define by transfinite recursion the rank function on $\Desc(\PP)$
(taking ordinals as values), which we denote by $\hh_\PP$, and define
the ordinal $\hh(\PP) = \hh_\PP (\es)$. As in Section
\ref{sect:forward}, using transfinite recursion we mimic this
definition in \ATR.

We let, for $\sigma \in \Desc(\PP)$, $P_\sigma^c = \set{p \in
P}{(\forall i < \lh(\sigma))\, p <_P \sigma(i)} = \set{p \in P}{\sigma
\conc \langle p \rangle \in \Desc(\PP)}$, and we write $\PP_\sigma^c =
(P_\sigma^c, {\leq_P})$.

When \LL\ is a well-order we have $\Desc(\LL) = \Bad(\LL)$. Hence from
Lemma \ref{lin} it follows immediately that \ATR\ proves that if \LL\
is a well-order then $\LL \equiv \hh(\LL)$. We can now prove the next
Lemma exactly as we proved Lemma \ref{upbnd}.

\begin{lemma}\label{upbnd_c}
\ATR\ proves that if \PP\ is a well founded partial order and $\CC \in
\Ch(\PP)$ then $\CC \preceq \hh(\PP)$.
\end{lemma}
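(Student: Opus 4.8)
The plan is to mimic the proof of Lemma \ref{upbnd} verbatim, replacing $\Bad$ by $\Desc$, $\rk$ by $\hh$, and linear extensions by chains. Fix a well founded partial order $\PP$ and a chain $\CC = (C,{\leq_P}) \in \Ch(\PP)$. First I would note that $\CC$ is a well-order: it is a linear order by the definition of chain, and any $<_P$-descending sequence of elements of $C$ would be a $<_P$-descending sequence in $\PP$, contradicting the well-foundedness of $\PP$; this argument is available already in \RCA. Since $\CC$ is a linear order we have $\Desc(\CC) = \Bad(\CC)$, so the consequence of Lemma \ref{lin} recorded just above (namely that \ATR\ proves $\CC \equiv \hh(\CC)$ for every well-order $\CC$) applies to $\CC$.

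Next, $\Desc(\CC)$ is a subtree of $\Desc(\PP)$: if $\sigma \in \Desc(\CC)$ then each $\sigma(i)$ lies in $C \subseteq P$ and $\sigma(i) <_P \sigma(j)$ for $j < i$, so $\sigma \in \Desc(\PP)$. Hence for every $\sigma \in \Desc(\CC)$ the collection of one-step extensions of $\sigma$ inside $\Desc(\CC)$ is contained in the collection of its one-step extensions inside $\Desc(\PP)$, and a \DE11\ transfinite induction on $\hh_\CC(\sigma)$---legitimate in \ATR\ because the relevant comparisons of well-orders are \DE11\ by Corollary \ref{preceqD}, exactly as in the proofs of Lemmas \ref{lin} and \ref{upbnd}---yields $\hh_\CC(\sigma) \preceq \hh_\PP(\sigma)$ for all $\sigma \in \Desc(\CC)$. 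Taking $\sigma = \es$ gives $\hh(\CC) \preceq \hh(\PP)$. Combining this with $\CC \equiv \hh(\CC)$ we conclude $\CC \preceq \hh(\PP)$.

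I do not anticipate any real difficulty, since this is the promised ``exactly as Lemma \ref{upbnd}'' argument. The only two points deserving (routine) care are the verification that a chain of a well founded partial order is genuinely a well-order in the sense of \RCA, and the observation that the monotonicity $\hh_\CC(\sigma) \preceq \hh_\PP(\sigma)$ must be obtained by a \DE11\ induction available in \ATR\ rather than by an unrestricted transfinite induction---both entirely analogous to the bookkeeping already carried out in Section \ref{sect:forward}.
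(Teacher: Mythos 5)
Your proof is correct and is exactly the argument the paper intends: the text preceding the lemma records that $\LL \equiv \hh(\LL)$ for well-orders (via $\Desc(\LL)=\Bad(\LL)$ and Lemma \ref{lin}) and then states that the lemma is proved ``exactly as we proved Lemma \ref{upbnd},'' which is precisely your subtree-plus-rank-monotonicity argument. Your two points of care (that a chain in a well founded partial order is a well-order in \RCA, and that the monotonicity of $\hh$ is a \DE11\ transfinite induction in \ATR) are the right bookkeeping and match the paper's treatment.
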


We need the following version of Lemma 2 in \cite{Schm81}.

\begin{lemma}\label{Schmidt}
\ATR\ proves that for each wpo $\PP = (P,{\leq_P})$ and each
$\set{y_i^j}{j \leq i} \subseteq P$ there exists a strictly increasing
$g: \N \to \N$ satisfying $y_{g(j)}^j \leq_P y_{g(j+1)}^j$ for every
$j$. Moreover we can require $g$ to be uniformly recursive in the
$\omega$-jump of $\PP \oplus \set{y_i^j}{j < i}$.
\end{lemma}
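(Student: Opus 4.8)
The plan is to homogenize the family one column at a time via Lemma~\ref{wqo(set)} and then diagonalize. Write $X = \PP \oplus \set{y_i^j}{j < i}$. First I would construct, by recursion on $j$, a decreasing sequence of infinite subsets of $\N$, say $\N = A_{-1} \supseteq A_0 \supseteq A_1 \supseteq \dots$, such that $A_j$ is homogeneous for the $j$-th column in the sense that $y_i^j \leq_P y_{i'}^j$ whenever $i,i' \in A_j$ with $j < i < i'$. At stage $j$, list the elements of $A_{j-1}$ that exceed $j$ in increasing order as $(a_n)_{n \in \N}$ and apply Lemma~\ref{wqo(set)} (a consequence of \CAC, hence available in \ATR) to the function $n \mapsto y_{a_n}^j$ to obtain an infinite set $B$ with $y^j_{a_n} \leq_P y^j_{a_{n'}}$ for $n < n'$ in $B$; then set $A_j = \set{a_n}{n \in B}$, an infinite subset of $A_{j-1}$.

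Once the $A_j$ are in hand I would define $g$ by the evident diagonal recursion: let $g(j)$ be the least element of $A_j$ that is both larger than $g(j-1)$ and larger than $j$. Since each $A_j$ is infinite, $g$ is total and strictly increasing with $g(j) > j$; moreover $g(j) \in A_j$ and $g(j+1) \in A_{j+1} \subseteq A_j$, so by homogeneity of $A_j$ we get $y^j_{g(j)} \leq_P y^j_{g(j+1)}$ for every $j$, which is exactly the conclusion.

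For the effectivity clause I would appeal to Lemma~\ref{wqo(set)eff}: the passage from $A_{j-1}$ together with $X$ to $A_j$ is carried out by a single procedure recursive in the double jump of $A_{j-1} \oplus X$ (the function $n \mapsto y^j_{a_n}$ being recursive in $A_{j-1} \oplus X$, and $\PP$ being a component of $X$). Unwinding the recursion, $A_j$ is uniformly recursive in $X^{(2j+2)}$, so the whole sequence $(A_j)_{j \in \N}$ is uniformly recursive in $X^{(\om)}$, and $g$ is recursive in that sequence. This is also the one point where the argument exceeds \ACA: the existence of $X^{(\om)}$ (equivalently, the iteration of the jump recursion $\om$ times) is what \ATR\ supplies, after which $(A_j)$ and $g$ exist by $\Delta^0_1$ comprehension relative to $X^{(\om)}$.

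I expect the combinatorial core — homogenize each column, then thread a strictly increasing $g$ through the nested sets $A_j$ — to be entirely routine, and that the only care needed is the bookkeeping in the last paragraph: checking that the stage-by-stage use of Lemma~\ref{wqo(set)eff} is genuinely uniform in $j$, that the iterated double jumps $X^{(2j+2)}$ are all recursive in $X^{(\om)}$ uniformly, and that the input fed to Lemma~\ref{wqo(set)eff} at stage $j$ (namely $A_{j-1}$, $\PP$, and the relevant part of $\set{y_i^j}{j<i}$) is itself recursive in the previous stage's output together with $X$.
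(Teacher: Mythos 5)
Your proposal is correct and follows essentially the same route as the paper's proof: homogenize each column via Lemma \ref{wqo(set)eff} to get a nested sequence of infinite sets $A_j$ with $A_j$ recursive in $X^{(2j+2)}$, then diagonalize to extract $g$ recursive in $X^{(\om)}$ (the paper takes $g(j)$ to be the $j$-th element of $A_j$ rather than a least-element recursion, but this is immaterial). Your extra care with the indexing constraint $i>j$, needed so that $y_i^j$ is defined, is a small point the paper glosses over.
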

\begin{proof}
We follow Schmidt's proof. Fix the wpo \PP\ and $\set{y_i^j}{j \leq
i}$. Let $Z = \PP \oplus \set{y_i^j}{j < i}$. We define recursively a
sequence of infinite sets $(A_j)$ so that $A_j$ is computable in
$Z^{(2j+2)}$ (the $(2j+2)$th jump of $Z$) as follows.

By applying Lemma \ref{wqo(set)eff} to the function $i \mapsto y_i^0$
we can find $A_0$ infinite, computable in $Z''$, and such that $y_i^0
\leq_P y_{i'}^0$ for all $i,i' \in A_0$ with $i<i'$. If we have defined
$A_j$ infinite and computable in $Z^{(2j+2)}$ we choose $A_{j+1}
\subseteq A_j$ infinite and computable in $(Z \oplus A_j)''$ (and hence
in $Z^{(2j+4)}$) such that $y_i^{j+1} \leq_P y_{i'}^{j+1}$ for all
$i,i' \in A_{j+1}$ with $i<i'$. Again, the existence of $A_{j+1}$
follows from Lemma \ref{wqo(set)eff}.

Let now for all $j$, $h_j$ be the function enumerating in increasing
order $A_j$. Set $g(j) = h_j(j)$. To prove that $g$ has the desired
property notice that, since $A_{j+1} \subseteq A_j$, there exists $i
\geq j+1$ such that $g(j+1) = h_{j+1}(j+1) = h_j(i)$. This implies
$g(j+1) > g(j)$ and $y_{g(j)}^j = y_{h_j(j)}^j \leq_P y_{h_j(i)}^j =
y_{g(j+1)}^j$ for every $j$. Moreover $g$ is computable in $\bigoplus_j
A_j$ and, by the uniformity of our construction, $\bigoplus_j A_j$ is
computable in the $\omega$-jump of $Z$.
\end{proof}

We can now prove the main theorem.

\begin{theorem}\label{ATR->MC}
\ATR\ proves \MC.
\end{theorem}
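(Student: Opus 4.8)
The plan is to run the proof of Theorem~\ref{ATR->MLE} again, replacing $\Bad(\PP)$, $\rk_\PP$, $\PP_\sigma$ throughout by $\Desc(\PP)$, $\hh_\PP$, $\PP_\sigma^c$, and replacing ``linear extension of $\PP_\sigma^c$'' by ``chain in $\PP_\sigma^c$''. Fix a wpo $\PP = (P,{\leq_P})$. By arithmetical transfinite recursion on $\hh_\PP$ we assign to each $\sigma \in \Desc(\PP)$ a chain $\CC_\sigma$, and then prove by \DE11\ transfinite induction on $\hh_\PP$ (a legitimate induction in \ATR\ by Corollary~\ref{preceqD}, since \RCA\ already proves that every chain in a wpo is a well-order) that $\CC_\sigma \in \Ch(\PP_\sigma^c)$ and $\hh_\PP(\sigma) \preceq \CC_\sigma$ for every $\sigma$. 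Since $\PP_\es^c = \PP$, this yields a chain $\CC_\es$ in \PP\ with $\hh(\PP) \preceq \CC_\es$, which by Lemma~\ref{upbnd_c} is a maximal chain, completing the proof. The base and successor cases are the obvious variants of the corresponding cases of Theorem~\ref{ATR->MLE}: if $\hh_\PP(\sigma) = 0$ then $P_\sigma^c = \es$ and $\CC_\sigma$ is empty; if $\hh_\PP(\sigma)$ is a successor, \ATR\ supplies $p(\sigma) \in P_\sigma^c$ with $\hh_\PP(\sigma) = \hh_\PP(\sigma\conc\langle p(\sigma)\rangle) + 1$, and $\CC_\sigma := \CC_{\sigma\conc\langle p(\sigma)\rangle} + \{p(\sigma)\}$ works, because every element of $\CC_{\sigma\conc\langle p(\sigma)\rangle}$ lies strictly $\leq_P$-below $p(\sigma)$.

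The limit case is where Lemma~\ref{Schmidt} enters, playing the role that the shuffle Lemma~\ref{ATRshuffle} played in Theorem~\ref{ATR->MLE}. Suppose $\hh_\PP(\sigma) = \lambda$ is a limit. Using Theorem~\ref{CNF} exactly as in Theorem~\ref{ATR->MLE}, fix a Cantor normal form for $\lambda$ and extract from it a strictly increasing sequence of well-orders $(\lambda_n)$ with $\lambda_0 = \gamma$, $\lambda_{n+1} \equiv \lambda_n + \om^{\beta_n}$, and $\lambda \equiv \sup_n \lambda_n \equiv \gamma + \sum_n \om^{\beta_n}$. Since $\lambda = \sup\set{\hh_\PP(\sigma\conc\langle p\rangle)}{p \in P_\sigma^c}$, an application of \SAC\ (as in Theorem~\ref{ATR->MLE}, to make the choice uniformly in $\sigma$) gives a sequence $(x_i)$ of elements of $P_\sigma^c$ with $\lambda_{i+1} \preceq \hh_\PP(\sigma\conc\langle x_i\rangle)$ for all $i$. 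Write $C_i := \CC_{\sigma\conc\langle x_i\rangle}$; by the induction hypothesis $\lambda_{i+1} \preceq \hh_\PP(\sigma\conc\langle x_i\rangle) \preceq C_i$, so for each $j \leq i$ the well-order $C_i$ has a unique proper initial segment of order type $\lambda_j$, and we let $c_i^j$ denote the least element of $C_i$ above it. Apply Lemma~\ref{Schmidt} to the family $\set{c_i^j}{j \leq i}$ to obtain a strictly increasing $g\colon \N \to \N$ with $c_{g(j)}^j \leq_P c_{g(j+1)}^j$ for every $j$. Setting
\[
T := C_{g(1)}\restriction\set{z}{z <_P c_{g(1)}^0}, \qquad S_j := C_{g(j+1)}\restriction\set{z}{c_{g(j+1)}^j \leq_P z <_P c_{g(j+1)}^{j+1}},
\]
we define $\CC_\sigma := T + \sum_j S_j$.

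The heart of the argument, and the step I expect to be the main obstacle, is to check that $\CC_\sigma$ is genuinely a chain in $\PP_\sigma^c$ of order type $\lambda$: the pieces $T, S_j$ lie in distinct chains $C_{g(j+1)}$ whose elements need not be $\leq_P$-comparable, and it is precisely Lemma~\ref{Schmidt} that supplies the comparabilities needed to glue them together. Within each $C_{g(j+1)}$ the markers satisfy $c_{g(j+1)}^j <_P c_{g(j+1)}^{j+1}$ (positions $\lambda_j \prec \lambda_{j+1}$), so each $S_j$ is a chain of order type $\om^{\beta_j}$, $T$ is a chain of order type $\gamma$, and $T + S_0$ is the initial segment of $C_{g(1)}$ below $c_{g(1)}^1$, of order type $\lambda_1$. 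Writing $f_j := c_{g(j+1)}^j$, the conclusion of Lemma~\ref{Schmidt} (at level $j+1$) gives $c_{g(j+1)}^{j+1} \leq_P c_{g(j+2)}^{j+1} = f_{j+1}$, whence $f_j <_P c_{g(j+1)}^{j+1} \leq_P f_{j+1}$ and $(f_j)$ is strictly $\leq_P$-increasing; from this one checks that $a <_P b$ whenever $a \in S_j$ and $b \in S_k$ with $j < k$ (indeed $a <_P c_{g(j+1)}^{j+1} \leq_P f_{j+1} \leq_P f_k \leq_P b$) and, likewise, that every element of $T + S_0$ is $<_P$ every element of $S_k$ for $k \geq 1$. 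Hence the pieces are pairwise disjoint, $\CC_\sigma$ is a chain in $P_\sigma^c$, equal as a linear order to the ordinal sum $(T + S_0) + S_1 + S_2 + \cdots$, of order type $\lambda_1 + \sum_{j \geq 1}\om^{\beta_j} \equiv \gamma + \sum_j \om^{\beta_j} \equiv \lambda$, so $\hh_\PP(\sigma) \preceq \CC_\sigma$ as required. (Computing the order types of $T$ and the $S_j$ and manipulating these transfinite sums uses only basic arithmetic of well-orders available in \ATR.) With the limit case in hand the \DE11\ transfinite induction closes exactly as in Theorem~\ref{ATR->MLE}, and, as there, the only non-arithmetical ingredients of the construction are the applications of \SAC\ and of Lemma~\ref{Schmidt}, both provable in \ATR.
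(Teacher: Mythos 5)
Your argument is, in substance, the paper's own proof: the same reduction via Lemma~\ref{upbnd_c} to showing $\hh(\PP)\preceq\CC_\es$, the same recursion over $\Desc(\PP)$ with the successor step handled by $p(\sigma)$, and the same use of Lemma~\ref{Schmidt} at limit stages to obtain $\leq_P$-comparable markers and glue together intervals of the chains $\CC_{\sigma\conc\langle x_i\rangle}$; your pieces $T,S_0,S_1,\dots$ are the paper's $D_1,D_2,\dots$ up to a reindexing of the $\lambda$'s.

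The one step you gloss over is the definition of the markers $c_i^j$ as \lq\lq the least element of $C_i$ above its unique proper initial segment of order type $\lambda_j$\rq\rq. First, you appeal to the induction hypothesis inside the recursion; the paper acknowledges this chicken-and-egg problem and resolves it by letting the marker default to an arbitrary element of $P$ when undefined, verifying afterwards that this never happens. Second, and more substantively, locating the initial segment of $C_i$ of order type $\lambda_j$ requires an embedding of $\lambda_j$ onto an initial segment of $C_i$ --- a $\Sigma^1_1$ piece of data obtained from comparability of well-orders --- so as written the limit step of your recursion is not arithmetical in the previously constructed objects, which is what the format of arithmetical transfinite recursion demands. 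The paper's remedy is to strengthen both the recursion and the induction: it constructs, alongside $C_\sigma$, an explicit isomorphism $f_\sigma\colon\CC_\sigma\to\hh_\PP(\sigma)$, so that the marker is simply the $f_{\sigma\conc\langle x_i\rangle}$-preimage of $\lambda_{n_{j-1}}$, arithmetically readable from the recursion data (and this stronger invariant also yields Theorem~\ref{MC+} for free). With that adjustment your proof coincides with the paper's.
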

\begin{proof}
By Lemma \ref{upbnd_c} to prove \MC\ within \ATR\ it suffices to
define, for each wpo \PP, $\CC \in \Ch(\PP)$ such that $\hh(\PP)
\preceq \CC$. We adapt the strategy of the proof of Theorem
\ref{ATR->MLE}. In fact, we define, for each $\sigma \in \Desc(\PP)$, a
set $C_\sigma$ and a function $f_\sigma$. We then prove by \DE11
transfinite induction on rank that $C_\sigma \subseteq P_\sigma^c$,
that $C_\sigma$ is totally ordered by $\leq_P$ (so that $\CC_\sigma =
(C_\sigma, {\leq_P}) \in \Ch(\PP_\sigma^c)$), and that $f_\sigma$ is an
isomorphism between $\CC_\sigma$ and $\hh_\PP (\sigma)$. Since
$\PP_\es^c = \PP$, we have $\CC_\es \in \Ch(\PP)$ and $\hh(\PP) \equiv
\CC_\es$.

As in the proof of Theorem \ref{ATR->MLE}, but using $\Desc(\PP)$,
$\hh_\PP$, and $P_\sigma^c$ in place of $\Bad(\PP)$, $\rk_\PP$, and
$P_\sigma$, respectively, we define $S$, $L$, $p: S \to P$ and for
every $\sigma \in L$ the sequences $(\lambda_n)$, $(x_i)$ and $(n_i)$.
Notice that here we use that \PP\ is a wpo (and not only a well founded
order) when we require $x_i <_P x_{i+1}$ for every $i$.\medskip

We now define by arithmetical transfinite recursion on rank $C_\sigma$
and $f_\sigma$. When $\hh_\PP (\sigma) =0$ we let $C_\sigma = \es$  and
$f_\sigma$ be the empty function. When $\sigma \in S$ let $C_\sigma =
C_{\sigma \conc \langle p(\sigma) \rangle} \cup \{p(\sigma)\}$ and,
recalling that $\hh_\PP(\sigma) = \hh_\PP(\sigma \conc \langle
p(\sigma) \rangle) +1$, let $f_\sigma$ extend $f_{\sigma \conc \langle
p(\sigma) \rangle}$ by mapping $p(\sigma)$ to $\hh_\PP(\sigma \conc
\langle p(\sigma) \rangle)$.

When $\sigma \in L$ let us write $\lambda_{n_{-1}}$ for the least
element of $\hh_\PP(\sigma)$ and for $j \leq i$ let $y_i^j$ be the
element such that $f_{\sigma \conc \langle x_i \rangle} (y_i^j) =
\lambda_{n_{j-1}}$. (To be scrupulous, at this stage we are not sure
that such a $y_i^j$ exists and is unique, and we should let $y_i^j$ to
be some fixed member of $P$ if this is not the case, an event we will
later show never occurs.) By Lemma \ref{Schmidt} we can find a strictly
increasing $g: \N \to \N$ which is uniformly recursive in the
$\omega$-jump of $\PP \oplus \set{y_i^j}{j < i}$ satisfying $y_{g(j)}^j
\leq_P y_{g(j+1)}^j$ for every $j$. (To be precise, this definition of
a set computable in the $\om$-jump can be replaced by $\om+1$
arithmetic steps.) For $j>0$ let
\[
D_j = \set{p \in C_{\sigma \conc \langle x_{g(j)} \rangle}}{y_{g(j)}^{j-1} \leq_P p <_P y_{g(j)}^j}
\quad \text{and set }
C_\sigma = \bigcup_{j>0} D_j.
\]
To define $f_\sigma$, for every $p \in C_\sigma$ find the least $j$
such that $p \in D_j$ (it will follow that there exists only one such
$j$) and set $f_\sigma (p) = f_{\sigma \conc \langle x_{g(j)} \rangle}
(p)$.\medskip

Now we prove by \DE11 transfinite induction on rank that $C_\sigma
\subseteq P_\sigma^c$, that $\CC_\sigma = (C_\sigma, {\leq_P}) \in
\Ch(\PP_\sigma^c)$ and that $f_\sigma$ is an isomorphism between
$\CC_\sigma$ and $\hh_\PP (\sigma)$. When $\hh_\PP (\sigma) =0$ there
is nothing to prove. When $\sigma \in S$ it suffices to notice that $p
<_P p(\sigma)$ for every $p \in C_{\sigma \conc \langle p(\sigma)
\rangle}$ and apply the induction hypothesis.

Fix now $\sigma \in L$. First, the induction hypothesis implies that
$y_i^j \in C_{\sigma \conc \langle x_i \rangle}$ and $f_{\sigma \conc
\langle x_i \rangle} (y_i^j) = \lambda_{n_{j-1}}$ for every $j \leq i$.
Moreover we have $D_j \subseteq C_{\sigma \conc \langle x_{g(j)}
\rangle} \subseteq P_{\sigma \conc \langle x_{g(j)} \rangle}^c \subset
P_\sigma^c$, and hence $C_\sigma \subseteq P_\sigma^c$. To check that
$\CC_\sigma$ is a chain fix $p,p' \in C_\sigma$. If $p,p' \in D_j
\subseteq C_{\sigma \conc \langle x_{g(j)} \rangle}$ for some $j$,
comparability of $p$ and $p'$ follows from the induction hypothesis. If
$p \in D_j$ and $p' \in D_{j'}$ for $j<j'$ then $p <_P y_{g(j)}^j
\leq_P y_{g(j+1)}^j \leq_P \dots \leq_P y_{g(j')}^{j'-1} \leq_P p'$
(where the first $\leq_P$ follows from the property of $g$) and we have
$p <_P p'$. This shows also that $\CC_\sigma = \sum_{j>0} \DD_j$, where
$\DD_j$ is of course $(D_j, {\leq_P})$. Notice also that by the
induction hypothesis and the definition of $y_i^j$, $f_\sigma$
restricted to $D_j$ is an isomorphism between $\DD_j$ and the interval
$[\lambda_{n_{j-2}}, \lambda_{n_{j-1}})$ of $\hh_\PP (\sigma)$. This
means that $f_\sigma$ is an isomorphism between $\CC_\sigma$ and
$\sum_{j>0} [\lambda_{n_{j-2}}, \lambda_{n_{j-1}}) = \hh_\PP (\sigma)$.
\end{proof}

Our proof of \MC\ in \ATR\ actually shows the following stronger
result.

\begin{theorem}\label{MC+}
\ATR\ proves that any wpo \PP\ contains a chain \CC\ such that
\[
(\forall \alpha < \hh(\PP))\, (\exists p \in C)\, \hh_\PP(p) = \alpha,
\]
where $\hh_\PP(p) = \hh_\PP(\langle p \rangle) = \hh(\PP_{(<_P p)})$.
\end{theorem}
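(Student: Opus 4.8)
The plan is to extract the stronger statement directly from the construction carried out in the proof of Theorem \ref{ATR->MC}, rather than to re-run a separate argument. Recall that in that proof we built, for every $\sigma \in \Desc(\PP)$, a set $C_\sigma$ together with a function $f_\sigma$, and we showed by \DE11 transfinite induction on rank that $C_\sigma \subseteq P_\sigma^c$, that $\CC_\sigma = (C_\sigma,{\leq_P}) \in \Ch(\PP_\sigma^c)$, and — crucially — that $f_\sigma$ is an \emph{isomorphism} between $\CC_\sigma$ and $\hh_\PP(\sigma)$. Specializing to $\sigma = \es$, so that $\PP_\es^c = \PP$, we obtain a chain $\CC = \CC_\es$ in \PP\ and an isomorphism $f_\es \colon \CC \to \hh(\PP)$. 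This already gives more than $\hh(\PP) \equiv \CC$: it gives a concrete order isomorphism, which is exactly what is needed to control the heights of individual elements.

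The key step is then to verify that for $p \in C$ the value $f_\es(p)$ is precisely $\hh_\PP(p)$, i.e.\ the rank of $p$ in $\Desc(\PP)$. First I would observe that because $f_\es$ is an isomorphism of $\CC$ onto $\hh(\PP)$, for each $\alpha < \hh(\PP)$ there is exactly one $p \in C$ with $f_\es(p) = \alpha$; this already yields the surjectivity-type conclusion $(\forall \alpha < \hh(\PP))(\exists p \in C)\, f_\es(p) = \alpha$. It remains to identify $f_\es(p)$ with $\hh_\PP(p)$. For this I would argue, again by \DE11 transfinite induction on $\hh_\PP(\sigma)$, the statement: for every $\sigma$ and every $p \in C_\sigma$, $f_\sigma(p) = \hh_\PP(\sigma\conc\langle p \rangle) = \hh(\PP_{(<_P p)} \cap P_\sigma^c)$, noting that for $p \in C_\sigma \subseteq P_\sigma^c$ one has $P_{\sigma\conc\langle p\rangle}^c = \{q \in P_\sigma^c : q <_P p\}$. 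In the successor case $\sigma \in S$ this is built into the definition ($f_\sigma$ maps $p(\sigma)$ to $\hh_\PP(\sigma\conc\langle p(\sigma)\rangle)$ and otherwise extends $f_{\sigma\conc\langle p(\sigma)\rangle}$, while $p <_P p(\sigma)$ forces $P^c_{\sigma\conc\langle p\rangle} = P^c_{\sigma\conc\langle p(\sigma)\rangle\conc\langle p\rangle}$); in the limit case $\sigma \in L$ one uses that $f_\sigma$ restricted to $D_j$ equals $f_{\sigma\conc\langle x_{g(j)}\rangle}$ restricted to $D_j$, together with the fact that for $p \in D_j$ we have $P^c_{\sigma\conc\langle p\rangle} = P^c_{\sigma\conc\langle x_{g(j)}\rangle\conc\langle p\rangle}$ because $p <_P x_{g(j)}$ and $x_{g(j)} \nleq_P q$ whenever $q <_P p$ (as $x_{g(j)} \in P^c_{\sigma\conc\langle x_i\rangle}$ for the relevant $i$ and the $x_i$'s increase). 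Applying this with $\sigma = \es$ and using $\hh_\PP(p) := \hh_\PP(\langle p\rangle) = \hh(\PP_{(<_P p)})$ gives $f_\es(p) = \hh_\PP(p)$, and hence the displayed conclusion.

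The main obstacle I anticipate is purely bookkeeping rather than conceptual: one has to be careful that passing from $\sigma\conc\langle p \rangle$ to $\sigma\conc\langle x_{g(j)}\rangle\conc\langle p\rangle$ genuinely leaves $P^c$ unchanged, which rests on the already-established monotonicity facts about the $x_i$'s ($x_i <_P x_{i+1}$, and $P^c_{\sigma\conc\langle x_i\rangle} \subseteq P^c_{\sigma\conc\langle x_{i+1}\rangle}$) from the proof of Theorem \ref{ATR->MC}. Since those facts, as well as the isomorphism claim for $f_\sigma$, have all been proved there, and since the induction is again a \DE11 induction and hence available in \ATR\ by Corollary \ref{preceqD}, no new axiomatic strength is needed; the argument lives entirely inside \ATR. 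In effect, Theorem \ref{MC+} is just the observation that the chain produced in Theorem \ref{ATR->MC} is built "rank by rank," so it meets every ordinal below $\hh(\PP)$ as the height of one of its elements.
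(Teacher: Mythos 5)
Your proposal is correct and matches the paper's own (one-sentence) proof, which likewise just observes that one can show inductively that $f_\sigma(p) = \hh_\PP(p)$ for every $\sigma \in \Desc(\PP)$ and $p \in C_\sigma$; you have simply spelled out that induction, using the fact that for $p \in P_\sigma^c$ transitivity gives $P^c_{\sigma\conc\langle p\rangle} = P_{(<_P p)}$, so your formulation $f_\sigma(p) = \hh_\PP(\sigma\conc\langle p\rangle)$ coincides with the paper's. No new ideas beyond the construction of Theorem \ref{ATR->MC} are needed, exactly as you say.
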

\begin{proof}
In the preceding proof it can be shown inductively that $f_\sigma (p) =
\hh_\PP(p)$ for every $\sigma \in \Desc(\PP)$ and $p \in C_\sigma$.
\end{proof}

The statement contained in Theorem \ref{MC+} (let us call it $\MC^+$)
is Wolk's original result. One reason for focusing on \MC\ rather than
on $\MC^+$ is that stating the latter requires the existence of the
function $\hh_\PP$ which is defined using \ATR. Thus we cannot state
$\MC^+$ in \RCA. Another reason for our preference for \MC\ is the
strong similarity with \MLE.\smallskip

As mentioned in the introduction, the proof of $(3) \implies (1)$ in
Theorem \ref{MC} is very simple.

\begin{theorem}\label{MC->ATR}
\RCA\ proves that \MC\ implies \ATR.
\end{theorem}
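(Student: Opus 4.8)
The plan is to derive \ATR\ from \MC\ via the comparability of well-orders, which is clause (2) of Theorem \ref{comp}. So I would fix arbitrary well-orders $\LL_0$ and $\LL_1$, assume as I may that their domains $L_0,L_1$ are disjoint, and apply \MC\ to the partial order $\PP = \LL_0 \oplus \LL_1$; note this is a disjoint union of two linear orders, so only the restricted form of \MC\ appearing in clause (3) of Theorem \ref{MC} is actually used.

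First I would check in \RCA\ that $\PP$ is a wpo, so that \MC\ applies. If it were not, some $f \colon \N \to P$ would satisfy $f(i) \nleq_P f(j)$ for all $i<j$; the pigeonhole principle for two colours (available in \RCA, as used already in Section \ref{sect:ACA}) yields an infinite set along which $f$ stays inside a single $L_i$, and the resulting sequence in $\LL_i$ is, by linearity, descending --- contradicting that $\LL_i$ is a well-order.

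The crux is the elementary observation that a chain of $\PP = \LL_0 \oplus \LL_1$ lies entirely in $L_0$ or entirely in $L_1$, since an element of $L_0$ and an element of $L_1$ are ${\leq_P}$-incomparable. Hence if $\CC = (C,{\leq_P})$ is the maximal chain supplied by \MC, then $C \subseteq L_i$ for some $i<2$, and the inclusion $C \hookrightarrow L_i$ witnesses $\CC \preceq \LL_i$. On the other hand $\LL_{1-i} = (L_{1-i},{\leq_P})$ is itself a chain of $\PP$, so maximality of $\CC$ gives $\LL_{1-i} \preceq \CC$; composing the two embeddings in \RCA\ yields $\LL_{1-i} \preceq \LL_i$. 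Thus $\LL_0$ and $\LL_1$ are comparable, and Theorem \ref{comp} gives \ATR.

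I do not expect any real obstacle here: apart from invoking Theorem \ref{comp}, the whole argument is elementary and stays within \RCA. The only points needing a little care are the verification that $\PP$ is a wpo and the routine check that both the inclusion $C \hookrightarrow L_i$ and the presentation of $\LL_{1-i}$ as a chain of $\PP$ are genuine embeddings; the dichotomy ``$C \subseteq L_0$ or $C \subseteq L_1$'' is a harmless use of excluded middle and requires no comprehension.
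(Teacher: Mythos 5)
Your proposal is correct and follows essentially the same route as the paper's proof: apply \MC\ to $\LL_0 \oplus \LL_1$, observe that the maximal chain must lie entirely in one of the two components, and use maximality against the other component viewed as a chain to obtain comparability, hence \ATR\ via Theorem \ref{comp}. The only difference is that you spell out the pigeonhole verification that $\LL_0 \oplus \LL_1$ is a wpo, which the paper leaves implicit.
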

\begin{proof}
By Theorem \ref{comp} it suffices to prove that if $\LL_0$ and $\LL_1$
are well-orders then either $\LL_0 \preceq \LL_1$ or $\LL_1 \preceq
\LL_0$. Given well-orders $\LL_0$ and $\LL_1$ let $\PP = \LL_0 \oplus
\LL_1$. \PP\ is a wpo and by \MC\ it has a maximal chain $\CC =
(C,{\leq_P})$. It is immediate that either $C \subseteq L_0$ or $C
\subseteq L_1$, and we may assume the first possibility holds, so that
$\CC \preceq \LL_0$. Since $\LL_1 \in \Ch(\PP)$ we have $\LL_1 \preceq
\CC$ and thus $\LL_1 \preceq \LL_0$.
\end{proof}

\bibliography{MLE}
\bibliographystyle{alpha}

\end{document}